\else\@footnotetext{\@setkeywords}\fi}
\else\@footnotetext{\@setkeywords}\fi
\else\@footnotetext{\@setaltkeywords}\fi}
\def\@maketitlehook{%
  \ifx\@empty\@subjclass\else\@footnotetext{\@setsubjclass}\fi
  \ifx\@empty\@keywords\else\@footnotetext{\@setkeywords}\fi
 % dimitri
  \ifx\@empty\@altkeywords\else\@footnotetext{\@setaltkeywords}\fi
  \ifx\@empty\thankses\else\def\par{\let\par\@par}\@footnotetext{\@setthanks}\fi
 }
\renewcommand\labelitemii\labelitemi
\newcommand{\gmrelax}[1]{\relax}
\newcommand{\gmvieux}[1]{\relax}
\newcommand{\nbd}{\nobreakdash}
\renewcommand{\leq}{\leqslant}
\renewcommand{\geq}{\geqslant}
\theoremstyle{plain}
\newtheorem{thm}{Th\'eor\`eme}[section]
\newtheorem{prop}[thm]{Proposition}
\newtheorem{lemme}[thm]{Lemme}
\theoremstyle{remark}                                             % env. "remark" de l'AMS
\newtheorem{rem}[thm]{Remarque}
\newtheorem{notation}[thm]{Notations}
\theoremstyle{definition}                                         % env. "definition" de l'AMS
\newtheorem{paragr}[thm]{}
\theoremstyle{plain} 
\newtheorem{lembsmpl}[thm]{Lemme bisimplicial}
\numberwithin{equation}{thm}
\newcommand{\sHom}{\operatorname{\kern.5truept\underline{\kern-.5truept\mathsf{Hom}\kern-1truept}\kern1.5truept}}
\newcommand{\Hom}{\operatorname{\mathsf{Hom}}}
\newcommand{\pref}[1]{{\widehat{ #1 }}}
\newcommand{\card}{\mathsf{card}}
\newcommand{\sauf}{\mathchoice{\raise 1.8pt\hbox{${\scriptstyle\kern
2.5pt\smallsetminus\kern 2.5pt}$}}{\raise 1.8pt\hbox{${\scriptstyle\kern
2.5pt\smallsetminus\kern 2.5pt}$}}{\raise
1.8pt\hbox{${\scriptscriptstyle\kern 1.5pt\smallsetminus\kern
1.5pt}$}}{\raise 1.8pt\hbox{${\scriptscriptstyle\kern
1.5pt\smallsetminus\kern 1.5pt}$}}}
\newcommand{\toto}{{\hskip -2.5pt\xymatrixcolsep{1.3pc}\xymatrix{\ar[r]&}\hskip -2.5pt}}
\newcommand{\todouble}{\xymatrixcolsep{1pc}\xymatrix{\ar@<.5ex>[r]\ar@<-.5ex>[r]&}}
\newcommand{\todoubleop}{\xymatrixcolsep{1pc}\xymatrix{\ar@<.5ex>[r]&\ar@<.5ex>[l]}}
\renewcommand{\hookrightarrow}{{\hskip -1.5pt\raise 1.5pt\vbox{\xymatrixcolsep{.9pc}\xymatrix{\ar@{^{(}->}[r]&}}\hskip -3.5pt}}
\renewcommand{\longmapsto}{{\hskip -2.5pt\xymatrixcolsep{1.3pc}\xymatrix{\ar@{|->}[r]&}\hskip -2.5pt}}
\newcommand{\cm}[2]{\mathchoice {#1\raise -1.8pt\vbox{\hbox{$\kern -.8pt/#2$}}} {#1\raise -1.8pt\vbox{\hbox{$\kern -.8pt/#2$}}\kern .8pt} {#1\raise -1.8pt\vbox{\hbox{$\scriptstyle\kern -.8pt /#2$}}} {#1\raise -1.8pt\vbox{\hbox{$\scriptscriptstyle\kern -.8pt /#2$}}} }
\newcommand{\mc}[2]{\mathchoice {\raise -1.8pt\vbox{\hbox{$#1\backslash$}}#2} {\raise -1.8pt\vbox{\hbox{$#1\backslash$}}#2} {\raise -1.8pt\vbox{\hbox{$\scriptstyle#1\backslash$}}#2} {\raise -1.8pt\vbox{\hbox{$\scriptscriptstyle#1\backslash$}}#2} }
\newcommand\oo{$\infty$\nbd}
\newcommand\pdfoo{\texorpdfstring{$\infty$}{\unichar{"221E}}}
\newcommand\ndef\emph
\newcommand{\Homi}{\operatorname{\kern.5truept\underline{\kern-.5truept\mathsf{Hom}\kern-.5truept}\kern1truept}}
\newcommand\op\circ
\newcommand\var\bullet
\newcommand{\ooCat}{\nCat{\infty}}
\newcommand{\nCat}[1]{{#1}\hbox{\protect\nbd-}\kern1pt\Cat}
\newcommand{\Cat}{{\mathcal{C}\mspace{-2.mu}\it{at}}}
\renewcommand\le\leqslant
\renewcommand\ge\geqslant
\newcommand\comp\ast
\newcommand\joint\star
\newcommand{\Cda}{\mathcal{C}_{\mathrm{da}}}
\newcommand\On[1]{\mathcal{O}_{#1}}
\newcommand\Deltan[1]{\varDelta_{#1}}
\newcommand\cDelta{\mathbf{\Delta}}
\newcommand\EnsSimp{\pref{\cDelta}}
\newcommand\Z{\mathbb{Z}}
\newcommand{\tb}[1]{\tau_{\le #1}^{\mathrm b}}
\newcommand\zbox[1]{\makebox[0pt][l]{#1}}
\newcommand\pbox[1]{\zbox{\quad#1}}
\newcommand{\tr}[2]{\mathchoice
  {#1\raise -1.8pt\vbox{\hbox{$\kern -.8pt/#2$}}}
  {#1\raise -1.8pt\vbox{\hbox{$\kern -.8pt/#2$}}\kern .8pt}
  {#1\raise -1.8pt\vbox{\hbox{$\scriptstyle\kern -.8pt /#2$}}}
  {#1\raise -1.8pt\vbox{\hbox{$\scriptscriptstyle\kern -.8pt /#2$}}}}
\newcommand{\trm}[2]{\mathchoice
  {#1\raise -1.8pt\vbox{\hbox{$\kern -.8pt\!\stackrel{\,\rm co}{/}\!\!#2$}}}
  {#1\raise -1.8pt\vbox{\hbox{$\kern -.8pt\!\stackrel{\,\rm co}{/}\!\!#2$}}\kern .8pt}
  {TODO}
  {TODO}}
\newcommand{\cotr}[2]{\mathchoice
  {\raise -1.8pt\vbox{\hbox{$#2\backslash$}}#1}
  {\raise -1.8pt\vbox{\hbox{$#2\backslash$}}#1}
  {\raise -1.8pt\vbox{\hbox{$\scriptstyle#2\backslash$}}#1}
  {\raise -1.8pt\vbox{\hbox{$\scriptscriptstyle#2\backslash$}}#1}}
\newcommand{\cotrm}[2]{\mathchoice
  {\raise -1.8pt\vbox{\hbox{$#2\!\stackrel{\!\rm co}{\backslash}$}}#1}
  {\raise -1.8pt\vbox{\hbox{$#2\!\stackrel{\!\rm co}{\backslash}$}}#1}
  {TODO}
  {TODO}}
\newcommand{\atom}[1]{\langle{#1}\rangle}
\newcommand{\tabld}[2]{\begin{pmatrix}#1^0_0 &\dots &#1^0_{#2-1}
  &#1^0_{#2}\cr\noalign{\vskip 3pt} #1^1_0 &\dots &#1^1_{#2-1}
  &#1^1_{#2}\end{pmatrix}}
\newcommand{\tablnu}[3]{\begin{pmatrix}#1^0_0 &\dots &#1^0_{#2}
  &#3\cr\noalign{\vskip 3pt} #1^1_0 &\dots &#1^1_{#2} &#3\end{pmatrix}}
\newcommand{\tabll}[2]{\begin{pmatrix}#1^0_0 &#1^0_1 &\dots &#1^0_{#2-1}
  &#1^0_{#2}\cr\noalign{\vskip 3pt} #1^1_0 &#1^1_1 &\dots &#1^1_{#2-1}
  &#1^1_{#2}\end{pmatrix}}
\newcommand{\cn}{\mathsf{c}}
\newcommand{\NGr}{N}              % nerf de Grothendieck
\newcommand{\Nrf}{N}              % nerf de Street
\newcommand{\Diag}{\mathsf{Diag}} % diagonale d'un ensemble bisimplicial
\newcommand{\Dsk}{\Deltan{1}}     % Disque de dimension 1 dans oo-Cat
\author{Dimitri Ara}
\address{Aix Marseille Univ, CNRS, Centrale Marseille, I2M, Marseille, France}
\email{dimitri.ara@univ-amu.fr}
\urladdr{http://www.i2m.univ-amu.fr/perso/dimitri.ara/}
\author{Georges Maltsiniotis}
\address{CNRS, Institut de Math\'ematiques de Jussieu\\
Universit\'e Paris 7 Diderot\\
Case Postale 7012\\
B\^atiment Sophie Germain\\
75205 Paris Cedex 13\\
France}
\email{georges.maltsiniotis\at imj-prg.fr}
\urladdr{http://webusers.imj-prg.fr/\raise -3.3pt\vbox{\hbox{$\widetilde{ \ }\,$}}georges.maltsiniotis/}
\title[Un théorème A de Quillen pour les $\infty$-catégories strictes I]{Un
théorème A de Quillen pour\\ les \raise .9pt\hbox{$\infty$-}catégories strictes I :\\ {la preuve simpliciale}}
\begin{document}

\frontmatter

\begin{abstract}
Le but de cet article est de démontrer une généralisation pour les
\oo-catégories strictes du célèbre théorème~A de Quillen. Ce résultat est
central à la théorie de l'homotopie des \oo-catégories strictes développée
par les auteurs. La preuve exposée dans ce texte est de nature simpliciale
et s'appuie sur la théorie des complexes dirigés augmentés de Steiner. Dans
un deuxième article, on démontrera ce même résultat par des méthodes
purement \oo-catégoriques.
\end{abstract}

\begin{altabstract}
The aim of this paper is to prove a generalization of the famous Theorem~A
of Quillen for strict \oo-categories. This result is central to the homotopy
theory of strict \oo-categories developed by the authors. The proof
presented here is of a simplicial nature and uses Steiner's theory of
augmented directed complexes. In a subsequent paper, we will prove the same
result by purely \oo-categorical methods.
\end{altabstract}

\subjclass{18D05, 18G30, 18G35, 18G55, 55P15, 55U10, 55U15, 55U35}

\keywords{$\infty$-catégories strictes, complexes dirigés augmentés,
ensembles simpliciaux, joint, nerf de Street, orientaux, produit
tensoriel de Gray, théorème~A, tranches, transformations oplax}

\altkeywords{strict $\infty$-categories, augmented directed complexes,
simplicial sets, join, Street's nerve, orientals, Gray tensor
product, Theorem A, slices, oplax transformations}

\maketitle

\tableofcontents

\mainmatter

\section*{Introduction}

\subsection*{Catégories supérieures}

Depuis quelques années, les catégories supérieures et, plus
particulièrement, les $(\infty,1)$\nbd-catégories faibles (aujourd'hui
improprement appelées \oo-catégories) et les
$(\infty,\infty)$\nbd-catégories strictes, plus connues sous le nom de
$\omega$\nbd-catégories ou de \oo-catégories strictes, sont activement
étudiées. Rappelons brièvement ce que sont ces objets.

\smallbreak

Une \emph{\oo-catégorie stricte} $C$ est  la donnée d'un \emph{\oo-graphe}
\[
\xymatrixcolsep{1.5pc}
\xymatrix{
C_{0}
&C_{1} \ar@<.6ex>[l]^(.45){t}\ar@<-.9ex>[l]_(.45){s}  
&\quad\cdots\quad\ar@<.6ex>[l]^(.55){t}\ar@<-.9ex>[l]_(.55){s}
&C_{i-1}\ar@<.6ex>[l]^(.43){t}\ar@<-.9ex>[l]_(.43){s}
&C_{i}\ar@<.6ex>[l]^(.41){t}\ar@<-.9ex>[l]_(.41){s}
&\quad\cdots\ar@<.6ex>[l]^{t}\ar@<-.9ex>[l]_{s}
}\ ,\qquad ss=st\ ,\quad ts=tt\ ,
\]
où pour $i\geq0$, $C_i$ désigne l'ensemble des \emph{$i$\nbd-cellules} de
$C$, et $s$ et $t$ les applications \emph{source} et
\emph{but}, muni d'applications \emph{unité}
\[
C_i\to C_{i+1}\ ,\quad x\mapsto 1_x\ ,\quad i\geq0\ ,\quad x\in C_i\ ,
\]
et de \emph{compositions}
\[
C_i\times_{C_j}C_i\to C_i\ ,\quad (x,y) \mapsto x\comp_jy\ ,\quad0\leq j<i\ ,\quad x,y\in C_i\ ,
\]
associant à un couple $(x,y)$ de $i$\nbd-cellules \emph{$j$\nbd-composables}
(c'est-à-dire telles que la $j$\nbd-cellule but itéré de $y$ soit égale à la
$j$\nbd-cellule source itérée de $x$) une $i$\nbd-cellule $x\comp_jy$. On
demande que ces applications soient compatibles aux opérations source et but
en un sens adéquat et qu'elles vérifient des axiomes de nature équationnelle
: \emph{associativité}, \emph{unité}, \emph{loi d'échange} et
\emph{fonctorialité des unités}. Par
exemple, l'associativité impose que, pour $0\leq j<i$ et $x,y,z$ trois
$i$\nbd-cellules $j$\nbd-composables, on ait
\[
(x\comp_jy)\comp_jz=x\comp_j(y\comp_jz)\ .
\]
Pour $0\leq m\leq n\leq\infty$, une \emph{$(n,m)$\nbd-catégorie stricte} est une
\oo-catégorie stricte dont les $i$\nbd-cellules sont \emph{triviales}
(c'est-à-dire sont des unités) pour $i>n$ et \ndef{strictement inversibles} pour
$i>m$ pour les compositions $\comp_j$, $m\leq j<i$. Pour $n \ge 0$, une
\ndef{$n$\nbd-catégorie stricte} est une $(n,n)$\nbd-catégorie stricte. Par
exemple, une $0$\nbd-catégorie stricte est un ensemble, une
$1$\nbd-catégorie stricte une catégorie, une $(1,0)$\nbd-catégorie stricte
un groupoïde, une $(2,1)$\nbd-catégorie stricte une $2$\nbd-catégorie
stricte dont les $2$\nbd-cellules sont inversibles pour la composition
\emph{verticale} $\comp_1$, une $(\infty,0)$\nbd-catégorie stricte un
\oo-groupoïde strict, une $(\infty,\infty)$\nbd-catégorie stricte une
\oo-catégorie stricte, etc.

\smallbreak

Moralement, une \ndef{\oo-catégorie faible} consiste en un \oo-graphe, des
applications unité et des compositions qui vérifient les mêmes
compatibilités aux opérations source et but que dans le cas strict mais qui,
par contre, ne vérifient pas les axiomes d'associativité, d'unité, d'échange
et de fonctorialité des unités à égalité près (\emph{on the nose} dirait-on
en anglais), mais seulement à des \emph{cohérences} près. De plus, les relations
naturelles que ces cohérences devraient satisfaire ne sont satisfaites qu'à
des cohérences supérieures près, et ainsi de suite. On définit les
\ndef{$(n,m)$-catégories faibles}, $0\leq m\leq n\leq\infty$, comme dans le cas
strict mais en remplaçant les inverses stricts par des \emph{inverses
faibles}.

\smallbreak

Il est difficile de formaliser cette notion de \oo-catégorie faible, et il
n'y a pas une manière unique de le faire. Le premier à avoir trouvé un fil
d'Ariane pour définir une structure de ce type est Grothendieck.  Sa
motivation et son inspiration étaient d'origine topologique et homotopique.
Conscient que les \oo-groupoïdes stricts ne modélisent que des types
d'homotopie d'un genre très particulier (notamment, si on se restreint aux
\oo-groupoïdes simplement connexes, on n'obtient que des produits d'espaces
d'Eilenberg-Mac Lane, voir par exemple~\cite{AraTypHomStr}), il cherchait à
dégager une notion de \oo-groupoïde faible modélisant tous les types
d'homotopie. Dans~\cite{PS1}, il a défini une telle notion en termes
d'esquisses projectives, il a associé à tout espace topologique un
\oo-groupoïde faible fondamental, et il a conjecturé que cela induisait une
équivalence entre la catégorie homotopique des espaces et la catégorie
homotopique des \oo-groupoïdes faibles. Cette conjecture, connue aujourd'hui
sous le nom d'\emph{hypothèse d'homotopie}, n'est toujours pas démontrée.
Quelques progrès ont été réalisés récemment, notamment dans~\cite{Simon} où
cette conjecture a été ramenée à une conjecture technique plausible.

\smallbreak

Une quinzaine d'années après la définition de Grothendieck, Batanin a dégagé
un concept de \oo-catégorie faible en utilisant sa théorie des
opérades globulaires~\cite{Batanin}. Plus tard, le second des auteurs a
réalisé qu'une légère variante de la définition de Grothendieck permettait
d'obtenir une définition de \oo-catégorie faible~\cite{infty}, définition
que le premier des auteurs a prouvé être essentiellement équivalente à celle
de Batanin~\cite{AraThese}. Un des avantages de cette définition « à la
Grothendieck » est qu'elle peut être implémentée pratiquement telle quelle
en théorie de types~\cite{ErSam}.

\smallbreak

Malheureusement, les structures ainsi définies sont malaisées à étudier, très
peu de résultats sont connus, et on ne dispose de pratiquement aucun exemple
autre que le \oo-groupoïde fondamental d'un espace, ou plus généralement,
d'un objet d'une catégorie de modèles dont tous les objets sont
fibrants~\cite{PS1, AraGrFond}, ainsi que bien sûr les \oo-catégories
strictes. L'étude des \oo-catégories strictes, plus simple,
constitue ainsi une source précieuse d'inspiration pour les \oo-catégories
faibles.

\smallbreak

L'approche la plus courante pour contourner la difficulté de l'étude des
modèles \emph{algébriques et globulaires} des \oo-catégories faibles
est de prendre l'hypothèse d'homotopie comme \emph{définition} des
\oo-groupoïdes faibles, autrement dit, de décréter que les \oo-groupoïdes
faibles \emph{sont} les espaces et souvent, plus précisément, les complexes
de Kan. Cette approche a conduit à définir de nombreux modèles
\emph{homotopiques} pour les $(\infty,1)$\nbd-catégories faibles, comme les
quasi-catégories~\cite{Joyal, Joyal1, Joyal2, Lurie, CisQuasi}, les
catégories simpliciales~\cite{Bergner}, les catégories de
Segal~\cite{Carlos}, les espaces de Segal complets~\cite{Rezk1}, et plus
tard pour les $(\infty,m)$\nbd-catégories faibles, $m$ arbitraire mais
\emph{fini}, comme les $m$\nbd-catégories de Segal~\cite{Carlos}, les
$\Theta_m$\nbd-espaces~\cite{Rezk2, Rezk3} et les
$m$\nbd-quasi-catégories~\cite{AraQCat}. Le seul modèle homotopique
développé jusqu'à présent pour les $(\infty,\infty)$\nbd-catégories faibles
est celui des ensembles compliciaux faibles~\cite{Verity1, Verity2},
généralisation naturelle des ensembles compliciaux~\cite{Verity3}, dont la
catégorie est équivalente à celle des \oo-catégories strictes. Les
$(\infty,m)$\nbd-catégories faibles ont trouvé, pour $m=1$, de nombreuses
applications en géométrie algébrique dérivée~\cite{LurieDer, Toen} et en
théorie de topos supérieurs~\cite{Lurie}, pour $m=2$, en théorie géométrique
de Langlands~\cite{Gaitsgory}, et pour $m$ arbitraire, dans une approche en
vue de la démonstration de \emph{l'hypothèse du
cobordisme}~\cite{LurieCobord}.

\subsection*{Les catégories comme modèles des types d'homotopie}

Comme on l'a rappelé, les \oo-groupoïdes stricts ne suffisent pas à
modéliser les types d'homotopie et c'est une des motivations premières à
l'introduction des \oo-groupoïdes faibles. La situation est différente pour
les $(n, m)$-catégories dès que~$m > 0$ : nul besoin de $(n, m)$-catégories
faibles quand on s'intéresse uniquement aux types d'homotopie ; les
$(m, n)$-catégories strictes suffisent. Le premier résultat dans cette
direction se trouve dans la thèse d'Illusie~\cite{IL}, attribué par celui-ci
à Quillen, et affirme que les petites catégories modélisent les types
d'homotopie. Plus précisément, si on localise la catégorie $\Cat$ des
petites catégories par les \emph{équivalences faibles}, foncteurs dont l'image par
le foncteur nerf, introduit par Grothendieck dans~\cite{Nerf}, est une
équivalence faible simpliciale, on obtient une catégorie équivalente à la
catégorie homotopique des CW\nbd-complexes (qui est équivalente à la
localisation de celle de \emph{tous} les espaces topologiques par les
équivalences faibles topologiques, définies en termes de groupes
d'homotopie).

\smallbreak

Plus tard, Thomason a montré~\cite{Th} que ces équivalences faibles de
$\Cat$, qu'on appelle depuis les \emph{équivalences de Thomason}, font
partie d'une structure de catégorie de modèles de Quillen sur $\Cat$, et
qu'il existe une équivalence de Quillen entre celle-ci et la structure de
Kan-Quillen sur les ensembles simpliciaux~\cite{Qu1}. Ainsi, on a une
équivalence, non seulement en tant que catégories, mais aussi en tant que
$(\infty,1)$\nbd-catégories, entre la localisation de Dwyer-Kan~\cite{DK} de
$\Cat$ et la $(\infty,1)$\nbd-catégorie des types d'homotopie. De plus,
grâce à ce théorème, on dispose de tous les outils provenant de la théorie
des catégories de modèles de Quillen~\cite{Qu1} pour étudier la théorie de
l'homotopie des petites catégories.

\smallbreak

Le premier à avoir vu l'importance des petites catégories comme modèles des
types d'homotopie est Quillen et cette idée est au c\oe{}ur de son travail
sur la K-théorie algébrique supérieure~\cite{QuK}. En effet, celui-ci a
défini les groupes de K-théorie comme étant les groupes d'homotopie de
l'espace des lacets d'un espace associé à une catégorie. Ses célèbres
théorèmes~A et B, qui donnent respectivement une condition suffisante pour
qu'un foncteur soit une équivalence de Thomason et pour qu'un certain carré
de foncteurs soit homotopiquement cartésien, jouent un rôle primordial
dans l'établissement des propriétés de la K-théorie algébrique. Le théorème
A est également constamment utilisé dans les huit articles de Neeman
consacrés à ses travaux sur la K-théorie des catégories triangulées, dont on
peut trouver un compte-rendu dans~\cite{Neeman}.

\smallbreak

C'est Grothendieck qui place la catégorie des petites catégories, et le
théorème~A, au centre de la théorie de l'homotopie~\cite{PS1}. Son
inspiration est toposique: la théorie de l'homotopie des espaces est pour
lui un cas particulier de celle des topos. Or, on dispose d'une notion
d'équivalence faible dans la catégorie des topos et des morphismes
géométriques entre ceux-ci, celle d'équivalence
d'Artin-Mazur~\cite{ArtMaz}. Un morphisme de topos $f=(f_*,f^*):X\to Y$ est
une \emph{équivalence d'Artin-Mazur} si pour tout $n\geq0$, le morphisme
$H^n(Y,\mathcal{L})\to H^n(X,f^*(\mathcal{L}))$, induit par $f$, est un
isomorphisme pour tout faisceau localement constant $\mathcal{L}$ sur $Y$,
d'ensembles si $n=0$, de groupes si $n=1$, et de groupes abéliens si
$n\geq2$. D'autre part, pour Grothendieck, une petite catégorie~$A$ est
indissociable de son topos des préfaisceaux $\pref{A}$. Ainsi, il
\emph{définit} les équivalences faibles dans $\Cat$ comme étant les
foncteurs $u:A\to B$ tels que le morphisme de topos
$(u_*,u^*):\pref{A}\to\pref{B}$ (où $u^*$ désigne le foncteur de
précomposition par $u$ et $u_*$ son adjoint à droite) est une
équivalence d'Artin-Mazur.  Avec cette définition, le théorème A est la
traduction dans $\Cat$ de la propriété des équivalences d'Artin-Mazur
affirmant que pour qu'un morphisme de topos soit une équivalence
d'Artin-Mazur, il suffit qu'il le soit localement sur son but. Les
équivalences faibles de $\Cat$ ainsi définies coïncident avec les
équivalences de Thomason définies par le nerf (voir par
exemple~\cite{Moer}).

\smallbreak

Grothendieck a axiomatisé l'étude des équivalences faibles dans $\Cat$ en
introduisant dans~\cite{PS1} (voir aussi~\cite{MalPRST} et~\cite{Cisinski}) la
notion de localisateur fondamental, classe de flèches de $\Cat$ satisfaisant
à une liste de propriétés formelles vérifiées par les équivalences de
Thomason, dont la plus importante est le théorème~A. Cette notion est à la
base de sa théorie des catégories test, petites catégories dont la catégorie
des préfaisceaux modélise canoniquement les types d'homotopie, à l'instar de
la catégorie des simplexes. Il a associé à tout localisateur fondamental des
notions de foncteur asphérique, propre, lisse, les foncteurs à la fois
propres et lisses ayant des propriétés analogues à celles des
quasi-fibrations en topologie.  Cela lui a permis de construire l'espace des
lacets d'une petite catégorie à partir d'une catégorie de chemins en forme
de zigzag. Il a conjecturé que les équivalences de Thomason forment le plus
petit localisateur fondamental (conjecture de minimalité), et qu'il est le
seul non grossier à satisfaire au théorème~B. En pratique, la conjecture de
minimalité signifie que le \emph{seul} moyen non trivial pour prouver qu'un
foncteur est une équivalence de Thomason est le théorème~A. Ces deux
conjectures ont été prouvées par Cisinski~\cite{CisinskiLFM, Cisinski}, qui a
également démontré que, modulo des questions ensemblistes, les localisateurs
fondamentaux sont en bijection avec les localisations de Bousfield à gauche
des types d'homotopie.

\subsection*{Les $n$-catégories comme modèles des types d'homotopie}

Le présent article fait partie d'un vaste projet consacré à la théorie de
l'homotopie des \oo-catégories strictes et à la généralisation aux
catégories supérieures strictes des résultats exposés dans les paragraphes
précédents. Cette généralisation est en partie motivée par le fait que, si
les petites catégories modélisent bien les types d'homotopie, les modèles
catégoriques produits sont en général peu naturels. On verra plus loin dans
cette introduction que les \oo-catégories strictes fournissent des modèles
plus simples et parfois très géométriques.

\smallbreak

Pour pouvoir généraliser ces résultats aux catégories supérieures, on doit
disposer d'une classe convenable d'équivalences faibles. Cela a été rendu
possible grâce à l'introduction par Street~\cite{StreetOrient} de l'objet
cosimplicial des \emph{orientaux} dans $\ooCat$, la catégorie
des petites \oo-catégories strictes et \oo-foncteurs stricts.  Cet objet
cosimplicial définit par le procédé de Kan un foncteur nerf, appelé
\emph{nerf de Street}, de $\ooCat$ vers la catégorie des ensembles
simpliciaux, adjoint à droite d'un foncteur de \emph{réalisation
\oo-catégorique}. La restriction du nerf de Street à $\Cat$ coïncide avec le
nerf usuel. On définit les équivalences faibles dans $\ooCat$, appelées par
extension \emph{équivalences de Thomason}, comme étant les \oo-foncteurs
stricts dont le nerf de Street est une équivalence faible simpliciale. Les
équivalences faibles dans la catégorie $\nCat{n}$ des petites
$n$\nbd-catégories strictes, pour $n \ge 0$, et plus généralement
dans $\nCat{(n,m)}$, la catégorie des petites $(n,m)$\nbd-catégories
strictes, pour $0\leq m\leq n\leq\infty$, appelées aussi \emph{équivalences
de Thomason}, sont les flèches dont l'image par l'inclusion dans $\ooCat$
est une équivalence de Thomason. Il existe d'autres foncteurs nerfs comme le
\emph{nerf $n$\nbd-simplicial}, le \ndef{nerf cubique}, ou le \emph{nerf
cellulaire} à valeurs dans les ensembles cellulaires, préfaisceaux sur la
catégorie $\Theta$ de Joyal~\cite{Joy, CB1, CB2, CiGM}.
Dans~\cite{DG2}, nous montrerons que ces nerfs définissent les mêmes
équivalences faibles que le nerf de Street, ce qui permet entre autres
d'étudier les propriétés de dualité des équivalences de Thomason, et s'avère
crucial dans~\cite{conde}.

\smallbreak

Le théorème d'Illusie-Quillen affirmant l'équivalence de la localisation de
$\Cat$ par les équivalences de Thomason et de la catégorie homotopique des
espaces se généralise aux $(n,m)$\nbd-catégories pourvu que $m > 0$,
c'est-à-dire pourvu qu'on n'exige pas que les $1$-cellules soient
inversibles. Ce résultat a été établi pour les $2$-catégories strictes par
J.~Chiche~\cite{ChicheThese, ChicheThmA} en utilisant des techniques
semblables à celles de~\cite{Hoyo}. Il a été généralisé par
A.~Gagna~\cite{Andrea} pour les $n$\nbd-catégories strictes,
\hbox{$0<n\leq\infty$}.
En utilisant nos résultats de~\cite{conde}, il obtient le cas général des
$(n,m)$\nbd-catégories strictes, $0<m\leq n\leq\infty$. Comme les
\oo-groupoïdes stricts ne modélisent pas les types d'homotopie, ce résultat
est optimal (il est faux si~$m=0$).

\smallbreak

On peut expliquer la philosophie de ces résultats en rapport avec
l'hypothèse d'homotopie comme suit. La catégorie des \oo-groupoïdes
faibles est une sous-catégorie de la catégorie des \oo-catégories faibles
($\infty$ signifiant comme dans le reste de cette introduction
$(\infty,\infty)$, et \emph{non pas} $(\infty,1)$, et les morphismes
respectant \emph{strictement} les structures). Cette inclusion admet
un adjoint à gauche qui inverse formellement (faiblement)
toutes les $i$\nbd-cellules, pour $i>0$. Les catégories, les \oo-catégories
strictes, et plus généralement, les $(n,m)$\nbd-catégories strictes sont en
particulier des \oo-catégories faibles. Conjecturalement leur type
d'homotopie, autrement dit le type d'homotopie de leur nerf de Street,
coïnciderait avec le type d'homotopie représenté, en vertu de l'hypothèse
d'homotopie, par le \oo-groupoïde faible obtenu en leur appliquant cet
adjoint à gauche. (Cette conjecture généraliserait le fait que le
$1$\nbd-tronqué du type d'homotopie d'une catégorie est représenté par son
groupoïde fondamental, obtenu en inversant formellement (strictement)
toutes ses flèches.) Ainsi, le besoin de cohérences non triviales à la place
des égalités dans les \oo-groupoïdes pour modéliser les types d'homotopie
apparaîtrait dans ce processus d'inversion formelle des cellules. On se
gardera de croire que les seules cohérences non triviales seraient celles
exprimant que ces inverses formels sont des inverses faibles. En effet, on sait
que les \oo-groupoïdes faibles dont la \oo-catégorie sous-jacente est
stricte ne suffisent pas pour modéliser les types
d'homotopie~\cite{Simps3types, AraTypHomStr}. \emph{A priori}, toutes les
cohérences faisant intervenir au moins un des inverses formels seront non
triviales.

\smallbreak

Dans~\cite{DG}, nous avons démontré un « théorème de Thomason abstrait »
permettant d'établir une liste de propriétés suffisantes pour construire une
structure de catégorie de modèles « à la Thomason » sur $\nCat{n}$, pour
$0<n\leq\infty$, et une adjonction de Quillen (qui sera une équivalence
grâce aux résultats de J.~Chiche pour $n=2$ et de A.~Gagna pour $n$ général)
avec la structure de Kan-Quillen sur les ensembles simpliciaux. Nous
avons prouvé toutes ces propriétés pour $n=1,2$, établissant ainsi, pour $n=2$,
l'analogue exact du théorème de Thomason, et toutes sauf deux dans le cas
général. L'une de ces deux propriétés est établie dans~\cite{conde}, la
dernière restant pour l'instant conjecturale. Dans un travail en cours,
A.~Gagna explore le cas~$n=3$. Le cas général est un des fils conducteurs de
notre projet. Notons que si cette conjecture est établie, les
généralisations du théorème d'Illusie-Quillen fourniraient non seulement des
équivalences de catégories, mais aussi des équivalences de
$(\infty,1)$\nbd-catégories et on disposerait de tous les outils des
catégories de modèles pour étudier la théorie de l'homotopie des petites
$n$\nbd-catégories.

\smallbreak

Dans~\cite{BCeg}, M.~Bullejos et A.~M.~Cegarra ont établi un théorème~A pour
les $2$\nbd-catégories strictes.  Une variante du théorème~A pour les
$2$\nbd-foncteurs lax normalisés de source une $1$\nbd-catégorie a été
prouvée par M.~del~Hoyo~\cite{delHoyoLoop} et étendue aux $2$\nbd-foncteurs
lax généraux dans~\cite{HoyoThA}. Dans~\cite{ChicheThese, ChicheThmA},
J.~Chiche en a établi une version relative et à transformation oplax près. Dans
le présent article, nous établissons une version relative et à
transformation oplax près dans $\ooCat$. Ce théorème est au centre de notre
projet. Il implique comme cas particuliers des théorèmes analogues pour les
$n$\nbd-catégories strictes et plus généralement pour les
$(n,m)$\nbd-catégories strictes. Notons qu'un théorème~A pour les
$(\infty,1)$\nbd-catégories faibles a été établi par Lurie~\cite{Lurie} dans
le cadre de la théorie des $(\infty, 1)$-foncteurs cofinaux, ainsi que par
G.~Heuts et I.~\hbox{Moerdijk}~\cite{HMThA}. Dans~\cite{Ceg}, A.~M.~Cegarra a
établi un théorème~B pour les $2$\nbd-catégories strictes.
Dans~\cite{AraThB}, le premier des auteurs démontre la généralisation de ce
théorème dans $\ooCat$.

\smallbreak

Pour énoncer et démontrer les théorèmes~A et~B dans $\ooCat$, nous avons été
conduits à revisiter le produit tensoriel de Gray lax \oo-catégorique
(construit pour la première fois par F.~A.~Al-Agl et
R.~Steiner~\cite{AlAglSteiner}, généralisant une construction analogue pour
les \oo-groupoïdes due à R.~Brown et P.~J.~Higgins~\cite{BrownTensor}, et
étudié dans la thèse de S.~Crans~\cite{CransThese}) et introduire une
opération joint et des tranches dans~$\ooCat$~\cite{joint}. Il s'agit, nous
semble-t-il, de constructions importantes en théorie des \oo-catégories,
indépendamment de la théorie de l'homotopie. Dans l'étude des
$(\infty,1)$-catégories faibles, et plus précisément des quasi-catégories,
le joint et les tranches jouent un rôle essentiel. En revanche, ces notions
n'ont pas encore été introduites dans le cadre des $(\infty,m)$-catégories
faibles, et notre construction peut servir de modèle.

\smallbreak

Dans~\cite{ChicheThese} et~\cite{ChicheLoc}, J.~Chiche a généralisé la théorie
des localisateurs fondamentaux de Grothendieck aux $2$-catégories strictes,
et il a montré que les localisateurs fondamentaux de $\nCat{2}$ sont en
bijection avec ceux de $\Cat$. Ce résultat profond, combiné avec le
théorème~A $2$\nbd-catégorique, implique aussitôt la conjecture de
minimalité pour~$\nCat{2}$ : les équivalences de Thomason $2$-catégoriques
forment le plus petit localisateur fondamental de $\nCat{2}$. De même,
combiné avec le théorème~B $2$\nbd-catégorique, il implique que le
localisateur fondamental des équivalences de Thomason est le seul
localisateur fondamental non grossier de $\nCat{2}$ satisfaisant à ce
théorème. De plus, le premier des auteurs a montré que modulo des questions
ensemblistes, tout localisateur fondamental de $\nCat{2}$ est la classe des
équivalences faibles d'une structure de catégorie de modèles à la
Thomason~\cite{Ara2Cat}. La définition des localisateurs fondamentaux
s'étend facilement aux \oo-catégories strictes. La généralisation des
résultats précédents, ainsi que de la théorie des foncteurs propres et lisses
de Grothendieck, fait partie de notre projet.

\smallbreak

Le principal avantage des modèles des types d'homotopie dans $\ooCat$ par
rapport à ceux dans $\Cat$ est qu'ils sont beaucoup plus naturels. Par
exemple, le théorème~B \oo-catégorique~\cite{AraThB} implique que pour tout
groupe commutatif $\pi$, et tout $n>1$, le $n$\nbd-groupoïde strict ayant un
seul objet, les unités itérées de cet objet comme $i$\nbd-cellules, pour
$0<i<n$, et les éléments de $\pi$ comme $n$\nbd-cellules, toutes les
compositions des $n$\nbd-cellules étant définies par la composition du
groupe, est un $K(\pi,n)$, généralisant le résultat bien connu pour $n=1$ et
$\pi$ un groupe arbitraire.
De plus, si le groupe $\pi$ est réticulé, c'est-à-dire est muni d'une
structure de treillis compatible à la structure de groupe, par exemple $\pi
= \Z$, notre théorème~A implique que la sous-$n$\nbd-catégorie du
$n$-groupoïde précédent avec comme $n$\nbd-cellules les éléments
\emph{positifs} du groupe est aussi un $K(\pi,n)$. On obtient ainsi des
modèles remarquablement simples des $K(\pi,n)$.

\smallbreak

Par ailleurs, le foncteur de réalisation \oo-catégorique, adjoint à gauche
du nerf de Street, associe à tout ensemble simplicial $K$ une \oo-catégorie
stricte qui est librement engendrée au sens des polygraphes par les
simplexes non dégénérés de $K$, tout $n$\nbd-simplexe de $K$ définissant une
$n$\nbd-cellule. En général, le type d'homotopie de cette \oo-catégorie
n'est pas le même que celui de $K$. Néanmoins, dans~\cite{conde}, nous
conjecturons que si l'ensemble simplicial $K$ provient d'un \emph{complexe
simplicial} (ordonné), alors ces deux types d'homotopie coïncident, et nous
démontrons cette conjecture quand le complexe simplicial est celui associé à
un ensemble ordonné, et en particulier s'il est la subdivision barycentrique
d'un autre. Ainsi, si $X$ est une variété \emph{munie} d'une triangulation
(autrement dit, d'un homéomorphisme avec la réalisation topologique d'un
complexe simplicial), quitte éventuellement à remplacer cette triangulation
par sa subdivision barycentrique, on lui associe naturellement une
\oo-catégorie ayant le même type d'homotopie et dont les $n$\nbd-cellules
sont des composés de $n$\nbd-simplexes de $X$. Nous soupçonnons que cette
\oo-catégorie encode non seulement le type d'homotopie de $X$, mais aussi
son type d'homotopie \emph{dirigée} (induit par l'homéomorphisme avec le
complexe simplicial).

\smallbreak

Un des outils techniques essentiels pour la réalisation de notre projet est
la très belle théorie des complexes dirigés augmentés de
Steiner~\cite{Steiner}. Un \ndef{complexe dirigé augmenté} est un complexe de
chaînes de groupes abéliens, concentré en degrés positifs, muni d'une
augmentation, et pour tout $n\geq0$, d'un sous-monoïde du groupe des
$n$\nbd-chaînes (ou de façon équivalente d'une relation de préordre
compatible avec la structure de groupe) sans aucune compatibilité avec la
différentielle ou l'augmentation. Steiner définit un foncteur
d'\emph{abélianisation} de $\ooCat$ vers la catégorie $\Cda$ des complexes
dirigés augmentés qui admet un adjoint à droite, le foncteur de
\emph{\oo-catégorification}. Il définit une classe de complexes dirigés
augmentés, que nous appelons des \emph{complexes de \hbox{Steiner}}, telle que la
restriction du foncteur de \oo-catégorification à la sous-catégorie pleine
de $\Cda$ formée de ces complexes est pleinement fidèle et induit une
équivalence de catégories avec une sous-catégorie pleine dense de $\ooCat$
formée de \oo-catégories strictes libres au sens des polygraphes. Ce
résultat permet de réduire certaines constructions ou vérifications
\oo-catégoriques à des constructions ou vérifications bien plus simples sur
les complexes de groupes abéliens.

\smallbreak

Dans~\cite{conde}, nous avons conjecturé que la localisation de la catégorie
$\Cda$ par les morphismes dont l'image par le foncteur de
\oo-catégorification est une équivalence de Thomason est équivalente à la
catégorie homotopique des espaces, cette équivalence étant induite par le
composé du nerf de Street et du foncteur de \oo-catégorification. Cette
conjecture a été démontrée par A.~Gagna, initialement en utilisant nos
résultats de~\cite{conde}, et ensuite par une méthode alternative, mais
toujours passant par les \oo-catégories strictes~\cite{Andrea}. Il s'agit
peut-être du sous-produit le plus spectaculaire de notre projet; c'est à
notre connaissance le premier modèle de \emph{tous} les types d'homotopie
entièrement basé sur les complexes de groupes abéliens.

\smallbreak

Un autre aspect de notre projet consiste à faire le pont entre nos résultats
et ceux relatifs à la structure de catégorie de modèles dite « folk » sur
$\ooCat$ dont les équivalences faibles sont les équivalences
\oo-catégoriques~\cite{folk}. On peut montrer que la classe de ces
équivalences est incluse dans celle des équivalences de Thomason et que ces
deux classes coïncident si on se restreint aux \oo-groupoïdes. En dérivant
le foncteur d'abélianisation de $\ooCat$ vers la catégorie des complexes des
groupes abéliens pour la structure « folk », on obtient la notion
d'\ndef{homologie polygraphique}~\cite{MetHomol}, tandis qu'en le dérivant
relativement aux équivalences de Thomason, on obtient l'homologie de
son nerf de Street. Ces deux homologies ne coïncident pas en général,
l'homologie polygraphique étant un invariant plus fin. Néanmoins, quand on
se restreint aux monoïdes, considérés comme \oo-catégories strictes avec un
seul objet et des $i$\nbd-cellules triviales pour $i\geq2$, ces deux
homologies sont isomorphes et on retrouve l'homologie ordinaire du
monoïde~\cite{LMMon}. Ce résultat est sur le point d'être généralisé à
toutes les ($1$\nbd-)catégories~\cite{Leonard}. L'analogue de l'homologie
polygraphique dans le cadre des $(n,1)$\nbd-catégories strictes, $2\leq
n\leq\infty$, est activement étudié en rapport avec la théorie de la
réécriture et la généralisation des liens établis par Squier entre les
propriétés des systèmes de réécriture pour les monoïdes et leurs propriétés
homologiques et homotopiques (voir par exemple~\cite{GuirMal}).

\subsection*{Contenu de l'article}

\medskip

Le but de cet article est de présenter une preuve aussi directe que possible
d'une version du théorème~A de Quillen pour les \oo-catégories strictes,
avec le minimum d'outils \oo-catégoriques, en utilisant des techniques
simpliciales, et les complexes dirigés augmentés de Steiner~\cite{Steiner}.
Dans~\cite{DG3}, nous donnerons une autre preuve, basée sur la théorie du
joint \oo-catégorique développée dans~\cite{joint}, et on montrera que la
mystérieuse homotopie simpliciale de la section~\ref{section:hmtpsmpl}
provient d'une transformation oplax de fonctorialité des
tranches.

\smallbreak

Dans la section introductive~\ref{section:ThAQ}, on rappelle l'énoncé du
théorème~A de Quillen dans $\Cat$ et on esquisse la preuve d'une variante
relative et à transformation près. Les sections~\ref{section:prelim_simpl}
et~\ref{section:prelim_ADC} sont consacrées respectivement à des
préliminaires sur les ensembles simpliciaux et des rappels sur les complexes
dirigés augmentés de Steiner et le nerf de Street. Dans la
section~\ref{section:prelim_cotr}, on introduit les notions de
transformation oplax et de tranche \oo-catégoriques, nécessaires pour
l'énoncé et la preuve du théorème~A dans~$\ooCat$. Les
sections~\ref{section:ThA_ooCat} et~\ref{section:hmtpsmpl} sont dédiées à la
preuve proprement dite de ce théorème.

\section{Le théorème A de Quillen}\label{section:ThAQ}

\begin{paragr}
On rappelle que le \emph{nerf} d'une petite catégorie $A$ est l'ensemble simplicial $\NGr A$ dont les $n$\nbd-simplexes sont les suites de $n$ morphismes composables de $A$:
\[
(\NGr A)_n=\{a_0\to a_1\to\cdots\to a_n\}\,,
\]
les opérateurs simpliciaux étant définis de la façon évidente. On dit qu'un foncteur entre petites catégories $u:A\to B$ est une \emph{équivalence faible} si son image par le foncteur nerf $\NGr u:\NGr A\to\NGr B$ est une équivalence faible simpliciale. On rappelle que si $b$ est un objet de $B$, on note $\cotr{A}{b}$ la catégorie dont les objets sont les couples $(a,g)$ formés d'un objet $a$ de $A$ et d'une flèche $g:b\to u(a)$ de $B$, un morphisme de $(a,g)$ vers un autre objet $(a',g')$ de $\cotr{A}{b}$ étant une flèche $f:a\to a'$ de $A$ telle que $g'=u(f)g$. On vérifie facilement que pour toute petite catégorie $T$, se donner un foncteur de $T$ vers $\cotr{A}{b}$ revient à se donner un foncteur $t:T\to A$ et une transformation naturelle $\tau:b\to ut$, où l'on note aussi $b:T\to B$ le foncteur constant de valeur $b$. Le théorème~A classique de Quillen~\cite{QuK} affirme que si pour tout objet $b$ de $B$ le foncteur
\[
\cotr{u}{b}:\cotr{A}{b}\to\cotr{B}{b}\,,\qquad(a,g)\mapsto(u(a),g)\,,
\]
induit par $u$, est une équivalence faible, alors le foncteur $u$ lui-même est une équivalence faible. Une version légèrement plus générale de ce théorème, qui se démontre exactement de la même façon, est la suivante:
\end{paragr}

\begin{thm}\label{thm:th_A_1-Cat}
Soit $\mathcal{T}$ un triangle de foncteurs entre petites catégories, commutatif à transformation naturelle donnée (non nécessairement inversible) près:
\[\mathcal{T}=
\raise 25pt
\vbox{
    \shorthandoff{;}
    \xymatrix@C=1.5pc{
      A \ar[rr]^u \ar[dr]_{v}_{}="f" & & B \ar[dl]^(0.42){w} \\
      & C
      \ar@{}"f";[ur]_(.15){}="ff"
      \ar@{}"f";[ur]_(.55){}="oo"
      \ar@<-0.5ex>@2"ff";"oo"^{\alpha}
      &.
    }
}
\]
Si pour tout objet $c$ de $C$, le foncteur $\cotr{\mathcal{T}}{c}:\cotr{A}{c}\to\cotr{B}{c}$
\[
\xymatrixcolsep{1pc}
(a,\xymatrix{c\ar[r]^-g&v(a)})\ \mapsto\ (u(a),\xymatrix{c\ar[r]^-g&v(a)\ar[r]^-{\alpha_a}&w(u(a)))}\,,
\]
induit par $\mathcal{T}$, est une équivalence faible, alors il en est de même de $u$.
\end{thm}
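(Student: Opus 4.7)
The plan is to compare $\NGr A$ and $\NGr B$ by realising them both as the diagonal of a suitable bisimplicial set indexed by $\NGr C$, and to deduce the result from the standard bisimplicial lemma. I would define a bisimplicial set $X_{p,q}$ whose $(p,q)$\nbd-simplices are triples consisting of a $p$\nbd-simplex $c_0 \to \cdots \to c_p$ of $\NGr C$, a $q$\nbd-simplex $a_0 \to \cdots \to a_q$ of $\NGr A$, and a morphism $g : c_0 \to v(a_0)$ in $C$; equivalently, $X_{p,q} = \bigsqcup_{c_\bullet \in \NGr_p C} \NGr_q(\cotr{A}{c_0})$. One defines $Y_{p,q}$ analogously with $(A, v)$ replaced by $(B, w)$, and a bisimplicial map $f : X \to Y$ which fixes the $C$\nbd-part and applies $\cotr{\mathcal{T}}{c_0}$ in the fibres, i.e.\ sending $(c_\bullet, a_\bullet, g)$ to $(c_\bullet, u(a_\bullet), \alpha_{a_0} \circ g)$.

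For each fixed $p$, the induced map $X_{p, \bullet} \to Y_{p, \bullet}$ is the disjoint union, indexed by $c_\bullet \in \NGr_p C$, of the simplicial maps $\NGr(\cotr{\mathcal{T}}{c_0}) : \NGr(\cotr{A}{c_0}) \to \NGr(\cotr{B}{c_0})$. By hypothesis each of these is a simplicial weak equivalence, so the same holds for $X_{p, \bullet} \to Y_{p, \bullet}$. The bisimplicial lemma then yields that $\mathrm{diag}(X) \to \mathrm{diag}(Y)$ is a weak equivalence of simplicial sets.

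On the other hand, I would use the projection $X \to \NGr A$ (viewing $\NGr A$ as bisimplicial, constant in $p$) which sends $(c_\bullet, a_\bullet, g)$ to $a_\bullet$. For each fixed $q$\nbd-simplex $a_\bullet$ of $\NGr A$, the preimage in $X_{\bullet, q}$ is, as a simplicial set in $p$, the nerve of the over-category $\tr{C}{v(a_0)}$, which is contractible since $(v(a_0), \mathrm{id}_{v(a_0)})$ is a terminal object. Hence for each $q$ the map $X_{\bullet, q} \to \NGr_q A$ is a weak equivalence (being a disjoint union of contractible simplicial sets indexed by the set $\NGr_q A$), and a second application of the bisimplicial lemma gives $\mathrm{diag}(X) \simeq \NGr A$. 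The same argument yields $\mathrm{diag}(Y) \simeq \NGr B$, compatibly with $f$ and $\NGr u$. The two-out-of-three property applied to the resulting commutative square will then conclude that $\NGr u$, and hence $u$, is a weak equivalence.

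The proof will consist essentially of bookkeeping; the main subtlety is invoking the bisimplicial lemma twice in slightly different guises (once for level-wise weak equivalences, once for fibrewise contractibility) and verifying that the comparison square commutes on the nose. The transformation $\alpha$ enters only in the definition of the map $f$ via the formula for $\cotr{\mathcal{T}}{c_0}$, and it is precisely the hypothesis that each $\cotr{\mathcal{T}}{c}$ is a weak equivalence that makes the key level-wise comparison work, so no further manipulation of $\alpha$ is required.
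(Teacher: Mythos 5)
Your strategy is exactly the one the paper uses: the bisimplicial set you call $X$ is the paper's $S(v)$ (and $Y$ is $S(w)$), the two applications of the bisimplicial lemma are the same two, and the conclusion by two-out-of-three is identical. There is, however, one indexing slip that, as written, makes $X$ ill-defined: the comparison arrow must go out of the \emph{last} vertex of the $C$\nbd-chain, i.e.\ $g : c_p \to v(a_0)$, not out of $c_0$. With $g : c_0 \to v(a_0)$ the simplicial structure in the $p$\nbd-direction does not exist --- the face deleting $c_0$ would require producing an arrow $c_1 \to v(a_0)$ from $c_0 \to c_1$ and $c_0 \to v(a_0)$, which is impossible; with $g : c_p \to v(a_0)$ one composes $c_{\varphi(p')} \to c_p \to v(a_0)$ for any $\varphi : \Deltan{p'} \to \Deltan{p}$ and everything is functorial. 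Note that your own identification of the fibres of $X \to \NGr A$ as the nerves of the slices $\tr{C}{v(a_0)}$ (contractible via the terminal object) already presupposes the $c_p$\nbd-convention: an $m$\nbd-simplex of $\NGr(\tr{C}{z})$ is a chain $c_0 \to \cdots \to c_m$ together with an arrow $c_m \to z$, not $c_0 \to z$. Accordingly, the other fibrewise identification reads $X_{p,\bullet} \cong \coprod_{c_\bullet} \NGr(\cotr{A}{c_p})$, and the hypothesis is applied to $\cotr{\mathcal{T}}{c_p}$. Once this is corrected, the remaining points you defer to bookkeeping (naturality of $\alpha$ making $f$ simplicial in the $q$\nbd-direction, commutation of the comparison square, two-out-of-three) do go through exactly as in the paper.
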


Le théorème A de Quillen classique est le cas particulier où $C=B$, $w=1_B$, $v=u$ et $\alpha=1_u$. L'ingrédient principal de la preuve de ce théorème est le \og lemme bisimplicial\fg{}. Si $K$ est un ensemble bisimplicial, on note $\Diag(K)$ l'ensemble simplicial dont les $n$\nbd-simplexes sont les éléments de $K_{n,n}$, les opérateurs simpliciaux étant définis de la façon évidente à partir de ceux de $K$. On dit qu'un morphisme d'ensembles bisimpliciaux $K\to L$ est une \emph{équivalence faible diagonale} si le morphisme induit $\Diag(K)\to\Diag(L)$ est une équivalence faible simpliciale.

\begin{lembsmpl} \label{lembsmpl}
Soit $K\to L$ un morphisme d'ensembles bisimpliciaux. Si pour tout $m\geq0$ \emph{(resp.} pour tout $n\geq0$\emph{)}, le morphisme d'ensembles simpliciaux
\[
K_{m,\bullet}\to L_{m,\bullet}\qquad\hbox{\emph{(resp.}}\quad K_{\bullet,n}\to L_{\bullet,n}\ )
\]
est une équivalence faible, alors le morphisme d'ensembles bisimpliciaux $K\to L$ est une équivalence faible diagonale.
\end{lembsmpl}

\begin{proof}
Voir par exemple \cite[Chapitre XII, paragraphe 4.3]{BousKan} ou \cite[proposition 2.1.7]{CisinskiLFM} pour une preuve plus moderne.
\end{proof}

\begin{proof}[Esquisse de preuve du théorème~\ref{thm:th_A_1-Cat}]
Pour tout foncteur $f:X\to Y$ entre petites catégories, on note $S(f)$ l'ensemble bisimplicial
\[
(S(f))_{m,n}=\{y_0\to y_1\to\cdots\to y_m\to f(x_0)\,,\ x_0\to x_1\to\cdots\to x_n\}\,,
\]
où les $y_i$ sont dans $Y$ et les $x_j$ dans $X$, les opérateurs simpliciaux étant définis de la façon évidente. On a un morphisme d'oubli $U_f:S(f)\to\NGr X$, défini par
\[
(y_0\to y_1\to\cdots\to y_m\to f(x_0)\,,\ x_0\to x_1\to\cdots\to x_n)\mapsto x_0\to x_1\to\cdots\to x_n\,,
\]
l'ensemble simplicial $\NGr X$ étant considéré comme ensemble bisimplicial constant en la première variable
\[
(\NGr X)_{m,n}=(\NGr X)_n=\{x_0\to x_1\to\cdots\to x_n\}\,.
\]
Pour tout $n\geq0$, le morphisme d'ensembles simpliciaux $(U_f)^{}_{\bullet,n}$ s'identifie à la somme
\[
\textstyle\coprod\limits_{x_0\to\cdots\to x_n}\kern -5pt\NGr(\tr{Y}{f(x_0)})\kern6pt\toto\kern-3pt\coprod\limits_{x_0\to\cdots\to x_n}\kern -5pt\ast\,,
\]
indexée par les $n$\nbd-simplexes $x_0\to\cdots\to x_n$ de $\NGr X$, des morphismes de source $\NGr(\tr{Y}{f(x_0)})$ et de but le point simplicial, qui sont des équivalences faibles puisque les catégories $\tr{Y}{f(x_0)}$ admettent un objet final. La stabilité des équivalences faibles par sommes et le lemme bisimplicial impliquent alors que le morphisme d'ensembles bisimpliciaux $U_f$ est une équivalence faible diagonale.
\smallbreak

Or, sous les hypothèses du théorème, on a un carré commutatif
\[
\xymatrixcolsep{3pc}
\xymatrix{
&S(v)\ar[r]^{S(\mathcal{T})}\ar[d]_{U_v}
&S(w)\ar[d]^{U_w}
\\
&\NGr A\ar[r]_{\NGr(u)}
&\NGr B
&\kern -30pt,\kern40pt
}
\] 
le morphisme d'ensembles bisimpliciaux $S(\mathcal{T})$ étant défini par
\[\begin{aligned}
(S(\mathcal{T}))_{m,n}(c_0\to&\cdots\to c_m\to v(a_0),\,a_0\to\cdots\to a_n)\cr
&=(c_0\to\cdots\to c_m\to wu(a_0),\,u(a_0)\to\cdots\to u(a_n))\,,\quad m,n\geq0\,,
\end{aligned}\]
où $c_m\to wu(a_0)$ est le composé
\[
\xymatrixcolsep{1.4pc}\xymatrix{c_m\ar[r]&v(a_0)\ar[r]^(.45){\alpha_{a_0}}&wu(a_0)}\,.
\]
D'autre part, on observe que pour tout $m\geq0$, le morphisme d'ensembles simpliciaux $(S(\mathcal{T}))_{m,\bullet}$ s'identifie à la somme
\[
\textstyle\coprod\limits_{c_0\to\cdots\to c_m}\kern -5pt\NGr(\cotr{A}{c_m})\kern6pt\toto\kern-3pt\coprod\limits_{c_0\to\cdots\to c_m}\kern -5pt\NGr(\cotr{B}{c_m})\,,
\]
indexée par les $m$\nbd-simplexes $c_0\to\cdots\to c_m$ de $\NGr C$, des morphismes 
\[
\NGr(\cotr{\mathcal{T}}{c_m}):\NGr(\cotr{A}{c_m})\toto\NGr(\cotr{B}{c_m})
\]
qui sont des équivalences faibles en vertu de l'hypothèse du théorème. La stabilité des équivalences faibles par sommes et le lemme bisimplicial impliquent alors que le morphisme d'ensembles bisimpliciaux $S(\mathcal{T})$ est une équivalence faible diagonale. On en déduit par deux sur trois que le morphisme bisimplicial constant en la première variable $\NGr(u)$ est une équivalence faible diagonale, autrement dit, que le morphisme d'ensemble simpliciaux $\NGr(u)$ est une équivalence faible, ce qui prouve le théorème.
\end{proof}

\begin{paragr}
Le but de cet article est de prouver l'analogue exact de ce théorème pour un
triangle de \oo-foncteurs stricts entre petites \oo-catégories strictes,
commutatif à transformation oplax près. Pour ce faire, il faut commencer par
donner un sens à l'énoncé, à savoir définir les équivalences faibles dans la
catégorie $\ooCat$ des petites \oo-catégories strictes et \oo-foncteurs
stricts entre celles-ci, et introduire les \og cotranches\fg{} $\cotr{X}{y}$,
pour $X\to Y$ un \oo-foncteur et $y$ un objet de $Y$, ainsi que la notion de
transformation oplax entre \oo-foncteurs. Dans ce texte, toutes les
\oo-catégories considérées seront strictes et petites et tous les
\oo-foncteurs seront stricts, ainsi on dira plus simplement \oo-catégorie et
\oo-foncteur pour les objets et les morphismes de $\ooCat$. La structure de
la preuve du théorème qu'on va établir est la même que celle du théorème A
de Quillen, sauf que les vérifications deviennent hautement non triviales.
\smallbreak

Si $C$ est une \oo-catégorie et $i\geq0$, on note $C_i$ l'ensemble de ses $i$\nbd-cellules. Pour toute $i$\nbd-cellule $x$ de $C$, on note $1_x$ la $(i+1)$\nbd-cellule unité de $x$, et si $i>0$, on note $s(x)$ la $(i-1)$\nbd-cellule source de $x$ et $t(x)$ la $(i-1)$\nbd-cellule but. Pour $j$ tel que $0\leq j<i$, on note $s_j(x)$ (resp. $t_j(x)$) la $j$\nbd-cellule source itérée (resp. but itéré) de~$x$ et on pose $s_i(x)=x$ (resp. $t_i(x)=x$). Pour $0\leq j<i$, si $x$ et $y$ sont deux $i$\nbd-cellules \emph{$j$\nbd-composables} de $C$, autrement dit si 
$s_j(x)=t_j(y)$, on note \smash{$x\comp_jy$} le composé correspondant. 
\smallbreak

Plus généralement, pour $i,j,k$ tels que $0\leq k<\min\{i,j\}$, si $x$ est une $i$\nbd-cellule et $y$ une $j$\nbd-cellule de $C$ telles que $s_k(x)=t_k(y)$, on notera \smash{$x\comp_ky$} la $(\max\{i,j\})$\nbd-cellule obtenue en composant $x$ avec la $i$\nbd-cellule identité itérée de $y$ ou la $j$\nbd-cellule identité itérée de $x$ avec $y$ selon que $i\geq j$ ou $i\leq j$. 
\end{paragr}

\section{Préliminaires simpliciaux}\label{section:prelim_simpl}

\begin{paragr}
On rappelle que la \ndef{catégorie des simplexes} $\cDelta$ est la sous-catégorie pleine de la catégorie des ensembles ordonnés formée des ensembles
\[
\Deltan{n}=\{0,1,\dots,n\}\,,\qquad n\geq0\,,
\]
ordonnés par l'ordre naturel. La catégorie des \ndef{ensembles simpliciaux} est la catégorie $\EnsSimp$ des préfaisceaux sur $\cDelta$. On identifiera, par le plongement de Yoneda, $\cDelta$ à une sous-catégorie pleine de $\EnsSimp$. Comme de coutume, si $X$ est un ensemble simplicial et $n\geq0$, on note $X_n$ l'ensemble $X(\Deltan{n})$ de ses \ndef{$n$\nbd-simplexes}, et si $\varphi:\Deltan{m}\to\Deltan{n}$ est un morphisme de $\cDelta$, on note $X_\varphi$ l'application $X(\varphi):X_n\to X_m$.
\end{paragr}

\begin{notation}\label{notation:simili_joint}
Soient $X$ un ensemble simplicial, $n\geq0$ un entier positif, et $x$ un $n$\nbd-simplexe de $X$. Pour tout $m\geq0$ et toute suite d'entiers $0\leq i_0\leq\cdots\leq i_m\leq n$, on note $x_{i_0,\dots,i_m}$ le $m$\nbd-simplexe $X_\varphi(x)$ de $X$, où $\varphi:\Deltan{m}\to\Deltan{n}$ désigne l'application définie par $\varphi(k)=i_k$, pour $0\leq k\leq m$.
\smallbreak

Pour tous $m,n\ge0$, on pose $\Deltan{m}\amalg\Deltan{n}=\Deltan{m+1+n}$ et on note
\[
i_{m,n}:\Deltan{m}\to\Deltan{m+1+n}\qquad\hbox{et}\qquad j_{m,n}:\Deltan{n}\to\Deltan{m+1+n}
\]
les inclusions \og initiale\fg{} et \og finale\fg, définies par
\[
i_{m,n}(k)=k\,,\quad0\leq k\leq m\,,\qquad\hbox{et}\qquad j_{m,n}(l)=m+1+l\,,\quad0\leq l\leq n\,.
\]
Ainsi, pour tout ensemble simplicial $X$, et tout $(m+1+n)$\nbd-simplexe $x$ de $X$, on a 
\[
X_{i_{m,n}}(x)=x_{0,\dots,m}\qquad\hbox{et}\qquad X_{j_{m,n}}(x)=x_{m+1,\dots,m+1+n}\,.
\]
Si $\varphi:\Deltan{m'}\to\Deltan{m}$ et $\psi:\Deltan{n'}\to\Deltan{n}$ sont deux morphismes de $\cDelta$, on note $\varphi\amalg\psi$ le morphisme $\varphi\amalg\psi:\Deltan{m'+1+n'}\to\Deltan{m+1+n}$ défini par
\[
(\varphi\amalg\psi)(i)=
\left\{\begin{matrix}
&\varphi(i)\hfill&\qquad0\leq i\leq m'\hfill\,,\cr
\noalign{\vskip 3pt}
&m+1+\psi(i-1-m')&\qquad m'+1\leq i\leq m'+1+n'\,,
\end{matrix}\right.
\]
de sorte que pour tout ensemble simplicial $X$ et tout $(m+1+n)$\nbd-simplexe $x$ de $X$, on a
\[
(X_{\varphi\amalg\psi}(x))_{0,\dots,m'}=X_\varphi(x_{0,\dots,m})\ \hbox{et}\ (X_{\varphi\amalg\psi}(x))_{m'+1,\dots,m'+1+n'}=X_\psi(x_{m+1,\dots,m+1+n})\,.
\]
Si $m'=m$ et $\varphi=1_{\Deltan{m}}$, on note plus simplement $\varphi\amalg\psi=\Deltan{m}\amalg\psi$ et de même si $n'=n$ et $\psi=1_{\Deltan{n}}$, on note $\varphi\amalg\psi=\varphi\amalg\Deltan{n}$. On remarque que $\Deltan{m}\amalg\Deltan{n}$ est la somme disjointe au niveau des ensembles sous-jacents, mais \emph{n'est pas} la somme catégorique de $\Deltan{m}$ et $\Deltan{n}$ dans $\cDelta$ (qui n'existe pas). Les foncteurs $\Deltan{0}\,\amalg\,?:\cDelta\to\cDelta$ et $?\,\amalg\Deltan{0}:\cDelta\to\cDelta$ sont connus sous les noms de \ndef{foncteurs de décalage vers la droite} et \ndef{vers la gauche} respectivement, et les foncteurs $\Deltan{m}\amalg\,?$ et $?\,\amalg\Deltan{n}$ sont des \ndef{foncteurs de décalage itérés}.
\end{notation}

\begin{paragr}
Soient $Y$ un ensemble simplicial, $m\geq0$, et $y$ un $m$\nbd-simplexe de $Y$. On définit des ensembles simpliciaux $\cotr{Y}{y}$ et $\tr{Y}{y}$ en posant, pour $n\geq0$,
\[\begin{aligned}
&(\cotr{Y}{y})_n=\{y'\in Y_{m+1+n}\mid y'_{0,\dots,m}=y\}=\{y'\in Y_{m+1+n}\mid Y_{i_{m,n}}(y')=y\}\,,\cr
\noalign{\vskip3pt}
&(\tr{Y}{y})_n=\{y'\in Y_{n+1+m}\mid y'_{n+1,\dots,n+1+m}=y\}=\{y'\in Y_{n+1+m}\mid Y_{j_{n,m}}(y')=y\}\,,
\end{aligned}\]
les opérateurs simpliciaux étant définis de la façon suivante. Soit $\psi:\Deltan{n'}\to\Deltan{n}$ un morphisme de $\cDelta$. Si $y'$ est un $n$\nbd-simplexe de $\cotr{Y}{y}$, alors avec les notations~\ref{notation:simili_joint}, $(\cotr{Y}{y})^{}_\psi(y')=Y^{}_{\Deltan{m}\amalg\psi}(y')$, et de même si $y'$ est un $n$\nbd-simplexe de $\tr{Y}{y}$, alors $(\tr{Y}{y})^{}_\psi(y')=Y^{}_{\psi\amalg\Deltan{m}}(y')$. On a des morphismes canoniques
\[
\cotr{Y}{y}\to Y\qquad\hbox{et}\qquad\tr{Y}{y}\to Y\,,
\]
définis par les applications
\[
(\cotr{Y}{y})_n\to Y_n\,,\quad y'\mapsto y'_{m+1,\dots,m+1+n}\qquad\hbox{et}\qquad(\tr{Y}{y})_n\to Y_n\,,\quad y'\mapsto y'_{0,\dots,n}\,.
\]
Si $f:X\to Y$ est un morphisme d'ensembles simpliciaux, on pose
\[
\cotr{X}{y}=\cotr{Y}{y}\times_YX\qquad\hbox{et}\qquad\tr{X}{y}=X\times_Y\tr{Y}{y}\,.
\]
Ainsi, pour tout $n\geq0$, on a
\[\begin{aligned}
&(\cotr{X}{y})_n=\{(y'\in Y_{m+1+n},\,x\in X_n)\mid y'_{0,\dots,m}=y,\,y'_{m+1,\dots,m+1+n}=f(x)\}\,,\cr
\noalign{\vskip 3pt}
&(\tr{X}{y})_n=\{(x\in X_n,\,y'\in Y_{n+1+m})\mid y'_{0,\dots,n}=f(x),\,y'_{n+1,\dots,n+1+m}=y\}\,.
\end{aligned}\]
\end{paragr}

Le lemme suivant résulte de~\cite[chapitre 6, section 1]{IL}; on en donne une preuve élémentaire.

\begin{lemme}\label{lemme:decal}
Pour tout ensemble simplicial $X$, tout $n\geq0$, et tout $n$\nbd-simplexe $x$ de $X$, l'ensemble simplicial $\tr{X}{x}$ est contractile.
\end{lemme}

\begin{proof}
Le $(1+n)$\nbd-simplexe $X_{\sigma_0}(x)$ de $X$, où $\sigma_0:\Deltan{1+n}\to\Deltan{n}$ est l'unique surjection croissante prenant deux fois la valeur $0$, est un $0$\nbd-simplexe de $\tr{X}{x}$ et définit donc un morphisme $\Deltan{0}\to\tr{X}{x}$. On note $s$ le composé
\[
s:\tr{X}{x}\to\Deltan{0}\to\tr{X}{x}\,.
\]
On définira une homotopie $h:\Deltan{1}\times\tr{X}{x}\to\tr{X}{x}$ de $1_{\tr{X}{x}}$ vers $s$ comme suit, ce qui prouvera l'assertion. Soient $m\geq0$ et $(\varphi,x')$,
\[
\varphi:\Deltan{m}\to\Deltan{1}\,,\quad x'\in X_{m+1+n}\,,\qquad x'_{m+1,\dots,m+1+n}=x\,,
\]
un $m$\nbd-simplexe de $\Deltan{1}\times\tr{X}{x}$. On pose $h(\varphi,x')=X_{\theta_\varphi}(x')$, où $\theta_\varphi:\Deltan{m+1+n}\to\Deltan{m+1+n}$ est défini par
\[
\theta_\varphi(i)=
\left\{\begin{matrix}
&i\hfill &\kern30pt0\leq i\leq m\quad\hbox{et}\quad\varphi(i)=0\,,\hfill\cr
\noalign{\vskip 3pt}
&m+1&\kern30pt0\leq i\leq m\quad\hbox{et}\quad\varphi(i)=1\,,\hfill\cr
&i\hfill &\kern30ptm+1\leq i\leq m+1+n\hfill\,,
\end{matrix}\right.
\]
et on vérifie aussitôt que $X_{\theta_\varphi}(x')$ est bien un $m$\nbd-simplexe de $\tr{X}{x}$, que si $\varphi=0$, alors $X_{\theta_\varphi}(x')=x'$ et que si $\varphi=1$, alors $X_{\theta_\varphi}(x')=s(x')$. Il reste donc à prouver que ces formules définissent un morphisme d'ensembles simpliciaux $h:\Deltan{1}\times\tr{X}{x}\to\tr{X}{x}$. Soit $\psi:\Deltan{m'}\to\Deltan{m}$; il s'agit de montrer que le carré
\[
\xymatrix{
(\Deltan{1}\times\tr{X}{x})_m\ar[r]^-h\ar[d]_{(\Deltan{1}\times\tr{X}{x})_\psi}
&(\tr{X}{x})_m\ar[d]^{(\tr{X}{x})_\psi}
\\
(\Deltan{1}\times\tr{X}{x})_{m'}\ar[r]_-h
&(\tr{X}{x})_{m'}
}
\]
est commutatif. Or, pour tout $m$\nbd-simplexe $(\varphi,x')$ de $\Deltan{1}\times\tr{X}{x}$, on a 
\[\begin{aligned}
&(\tr{X}{x})_\psi h(\varphi,x')=(\tr{X}{x})_\psi(X_{\theta_\varphi}(x'))=X_{\psi\amalg\Deltan{n}}X_{\theta_\varphi}(x')=X_{\theta_\varphi(\psi\amalg\Deltan{n})}(x')\,,\cr
\noalign{\vskip 3pt}
&h(\Deltan{1}\times\tr{X}{x})_\psi(\varphi,x')=h(\varphi\psi,X_{\psi\amalg\Deltan{n}}(x'))=X_{\theta_{\varphi\psi}}X_{\psi\amalg\Deltan{n}}(x')=X_{(\psi\amalg\Deltan{n})\theta_{\varphi\psi}}(x')\,.
\end{aligned}\]
Il suffit donc de vérifier que
$\theta_\varphi(\psi\amalg\Deltan{n})=(\psi\amalg\Deltan{n})\theta_{\varphi\psi}$.
Or, pour tout entier $i$ tel que $0\leq i\leq m'+1+n$, on a
\[\begin{aligned}
&(\psi\amalg\Deltan{n})\theta_{\varphi\psi}(i)=\left\{\begin{matrix}
&\kern-3pt(\psi\amalg\Deltan{n})(i)=\psi(i)\hfill&\quad0\leq i\leq m'\quad\hbox{et}\quad\varphi\psi(i)=0\,,\cr
\noalign{\vskip 3pt}
&\kern-3pt(\psi\amalg\Deltan{n})(m'+1)=m+1&\quad0\leq i\leq m'\quad\hbox{et}\quad\varphi\psi(i)=1\,,\cr
\noalign{\vskip 3pt}
&\kern-3pt(\psi\amalg\Deltan{n})(i)=m+i-m'&\quad m'+1\leq i\leq m'+1+n\hfill\,,
\end{matrix}\right.
\cr
\noalign{\vskip 5pt}
&\theta_\varphi(\psi\amalg\Deltan{n})(i)=\left\{\begin{matrix}
&\kern-3pt\theta_\varphi\psi(i)=
  \left\{\begin{matrix}
  &\kern-3pt\psi(i)\hfill&\ \varphi\psi(i)=0\cr
  \noalign{\vskip 2pt}
  &\kern-3ptm+1&\ \varphi\psi(i)=1
  \end{matrix}\right.
&\quad0\leq i\leq m'\,,\hfill\cr
\noalign{\vskip 3pt}
&\kern-3pt\theta_\varphi(m+i-m')=m+i-m'\hfill&\quad m'+1\leq i\leq m'+1+n\,,
\end{matrix}\right.
\end{aligned}\]
ce qui achève la démonstration.
\end{proof}

\section{Rappels sur les complexes dirigés augmentés de Steiner}\label{section:prelim_ADC}

\begin{paragr}\label{paragr:def_cda}
On rappelle qu'un \ndef{complexe dirigé augmenté}, notion introduite par Steiner~\cite{Steiner}, est un triplet $(K,K^*,e)$, où
$$K=\kern 10pt
\xymatrix{
\cdots\ar[r]^-{d_{i+1}}
&K_i\ar[r]^-{d_{i}}
&K_{i-1}\ar[r]^-{d_{i-1}}
&\cdots\ar[r]^-{d_{2}}
&K_1\ar[r]^-{d_{1}}
&K_0
}$$
est un complexe de chaînes de groupes abéliens en degrés positifs, $e:K_0\to\mathbb{Z}$ une augmentation (de sorte que $d_id_{i+1}=0$ pour $i\geq1$, et $ed_1=0$), et $K^\ast=(K^\ast_i)_{i \ge 0}$ est la donnée pour tout~$i \geq 0$ d'un sous-monoïde $K^\ast_i$ du groupe abélien $K_i$. (On ne demande aucune compatibilité entre la différentielle ou l'augmentation et ces sous-monoïdes.) Les éléments de $K_i$ seront appelés des \ndef{$i$-chaînes} ou des \ndef{chaînes de degré $i$}. 
\smallbreak

Pour $i\geq0$, le sous-monoïde $K^*_i$ induit une relation de préordre $\leq$ sur $K_i$, compatible avec sa structure de groupe, définie par
\[
x\leq y\ \Longleftrightarrow\ y-x\in K^*_i\,,
\]
et alors on a l'égalité
\[
K^*_i=\{x\in K_i\,|\,x\geq0\}\,.
\]
Ainsi, pour~$i \ge 0$, on dira qu'une $i$\nbd-chaîne de $K$ est \ndef{positive} si elle appartient à $K^*_i$, et on appellera les sous-monoïdes~$K^\ast_i$ les \ndef{sous-monoïdes de positivité} de $K$.
\smallbreak

Un \ndef{morphisme de complexes dirigés augmentés} $f:(K,K^*,e)\to(K',K'^*,e')$ est un morphisme de complexes $f:K\to K'$ compatible à l'augmentation et aux sous-monoïdes de positivité (au sens où $e'f_0=e$ et où pour tout $i\geq0$, on a $f_i(K^*_i)\subset K'^*_i$, autrement dit $f_i$ envoie les $i$\nbd-chaînes positives de $K$ sur des $i$\nbd-chaînes positives de $K'$). On note $\Cda$ la catégorie des complexes dirigés augmentés. On peut montrer que la catégorie $\Cda$ est localement présentable. En particulier, elle admet des petites limites inductives et projectives. Le foncteur complexe sous-jacent commute aux limites inductives (pour la description des limites inductives dans $\Cda$, voir~\cite[paragraphe~3.1]{joint}). 
\smallbreak

On désignera souvent, par abus de notation, un complexe dirigé augmenté par son complexe de chaînes sous-jacent.
\end{paragr}

\begin{paragr}\label{paragr:def_nu}
Dans \cite{Steiner}, Steiner  définit un couple de foncteurs adjoints
\[
\lambda:\ooCat\toto\Cda\ ,\qquad\nu:\Cda\toto\ooCat\,.
\]

Le foncteur $\lambda : \ooCat \to \Cda$ est défini de la façon suivante. Pour $C$ une \oo-catégorie et $i \ge 0$, le groupe abélien $\lambda(C)_i$ est engendré par les générateurs
\[ [x], \qquad\text{pour $x$ une $i$-cellule de $C$}, \]
soumis aux relations
\[
[x \ast_j y] = [x] + [y], \qquad\text{pour $0 \le j < i$ et $x$ et $y$ des $i$-cellules $j$-composables.}
\]
Le sous-monoïde de positivité $\lambda(C)^\ast_i$ est le sous-monoïde engendré par les $[x]$, pour $x$ une $i$-cellule de $C$. Pour $i > 0$, la différentielle $d_i : \lambda(C)_i \to \lambda(C)_{i-1}$ est définie par
\[
d([x]) = [t(x)] - [s(x)],\qquad\text{pour $x$ une $i$-cellule de $C$}.
\]
Enfin, l'augmentation $\lambda(C)_0 \to \Z$ est l'unique morphisme qui envoie, pour toute $0$\nbd-cellule~$x$ de $C$, le générateur $[x]$ sur $1$. Si $u : C \to D$ est un \oo-foncteur, pour tout $i \ge 0$, le morphisme de groupes abéliens $\lambda(u)_i : \lambda(C)_i \to \lambda(D)_i$ envoie un générateur $[x]$, pour $x$ une $i$-cellule de $C$, sur le générateur $[u(x)]$ de $\lambda(D)$. 

Le foncteur $\nu : \Cda \to \ooCat$ est défini de la façon suivante. Soit $K$ un complexe dirigé augmenté. Pour $i \ge 0$, les $i$-cellules de
$\nu(K)$ sont les tableaux
\[
\tabld{x}{i}
\]
tels que
\begin{enumerate}
\item[(\emph{a})] $x^\varepsilon_k$ est une $k$\nbd-chaîne positive de $K$, pour $\varepsilon = 0, 1$ et $0\le k \le i$ ;
\item[(\emph{b})] $d(x^\varepsilon_k) = x^1_{k-1} - x^0_{k-1}$, pour $\varepsilon = 0, 1$ et $0 < k \le i$ ;
\item[(\emph{c})] $e(x^\varepsilon_0) = 1$, pour $\varepsilon = 0, 1$ ;
\item[(\emph{d})] $x_i^0 = x_i^1$.
\end{enumerate}
La structure de \oo-catégorie sur $\nu(K)$ est décrite comme suit. Soient $i \ge 0$ et
\[
x = \tabld{x}{i}
\]
une $i$-cellule de $\nu(K)$. Si $i > 0$, les sources et buts de $x$ sont les tableaux
\[
s(x)=\tablnu{x}{i-2}{x^0_{i-1}}\quad\text{et}\quad t(x)=\tablnu{x}{i-2}{x^1_{i-1}}
\]
respectivement. Pour tout $i \ge 0$, l'identité de $x$ est le tableau
\[
1_x = \tablnu{x}{i}{0}.
\]
Enfin, si
\[
x = \tabld{x}{i}\quad\text{et}\quad y = \tabld{y}{i}
\]
sont deux $i$-cellules $j$-composables pour $i > j \ge 0$, on a
\[
x \comp^{}_j y =
\begin{pmatrix}
y^0_0
&\dots
&y^0_j
&x^0_{j+1}+y^0_{j+1}
&\dots
&x^0_{i}+y^0_{i}\cr
\noalign{\vskip 3pt}
x^1_0
&\dots
&x^1_j
&x^1_{j+1}+y^1_{j+1}
&\dots
&x^1_{i}+y^1_{i}
\end{pmatrix}.
\]
Si $f : K \to K'$ est un morphisme de complexes dirigés augmentés, le \oo-foncteur $\nu(f) : \nu(K) \to \nu(K')$ est défini par
\[
\tabld{x}{i}
\mapsto
\begin{pmatrix}
f(x^0_1)
&\dots
&f(x^0_{i-1})
&f(x^0_{i})
\cr\noalign{\vskip 3pt}
f(x^1_1)
&\dots
&f(x^1_{i-1})
&f(x^1_{i})
\end{pmatrix}.
\]
\end{paragr}

\begin{prop}[Steiner]\label{prop:Steiner_adj}
Les foncteurs
\[
\lambda : \ooCat \to \Cda, \qquad \nu : \Cda \to \ooCat
\]
forment un couple de foncteurs adjoints.
\end{prop}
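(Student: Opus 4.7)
La preuve consiste � construire une bijection
$$\Hom_{\ooCat}(C, \nu(K)) \cong \Hom_{\Cda}(\lambda(C), K),$$
naturelle en la \oo-cat�gorie $C$ et le complexe dirig� augment� $K$. L'id�e est qu'un morphisme d'un c�t� ou de l'autre est enti�rement d�termin� par son effet sur les g�n�rateurs cellulaires et que les relations correspondent exactement.

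� un morphisme $f : \lambda(C) \to K$, on associe le \oo-foncteur $\tilde f : C \to \nu(K)$ d�fini, sur une $i$\nbd-cellule $x$ de $C$, par le tableau dont les entr�es sont $\tilde f(x)^0_k = f([s_k(x)])$ et $\tilde f(x)^1_k = f([t_k(x)])$ pour $0 \le k < i$, et $\tilde f(x)^0_i = \tilde f(x)^1_i = f([x])$. La premi�re v�rification est que ce tableau satisfait les axiomes~(\emph{a})--(\emph{d}) d�finissant une $i$\nbd-cellule de $\nu(K)$~: la positivit� r�sulte de la positivit� des g�n�rateurs et de la compatibilit� de $f$ aux sous-mono�des de positivit�~; la condition sur la diff�rentielle r�sulte de la commutation de $f$ � $d$ combin�e aux identit�s globulaires $s \circ s_k = s_{k-1}$ et $t \circ s_k = t_{k-1}$~; les conditions d'augmentation et de co�ncidence en degr�~$i$ sont imm�diates.

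Il s'agit ensuite de v�rifier que $\tilde f$ est un \oo-foncteur. La compatibilit� aux sources et buts r�sulte directement des formules rappel�es d�crivant ces op�rations dans $\nu(K)$. Pour les unit�s, la relation $[1_x \comp_i 1_x] = 2[1_x] = [1_x]$ dans $\lambda(C)$ impose $[1_x] = 0$, d'o� $f([1_x]) = 0$, conforme � la formule de l'identit� dans $\nu(K)$. Le point cl� est la compatibilit� aux compositions~: pour $x, y$ des $i$\nbd-cellules $j$\nbd-composables, la relation $[x \comp_j y] = [x] + [y]$ dans $\lambda(C)$, combin�e aux �galit�s dans $C$
$$s_k(x \comp_j y) = s_k(y), \quad t_k(x \comp_j y) = t_k(x) \qquad\text{pour } k \le j,$$
$$s_k(x \comp_j y) = s_k(x) \comp_j s_k(y), \quad t_k(x \comp_j y) = t_k(x) \comp_j t_k(y) \qquad\text{pour } k > j,$$
permet d'identifier entr�e par entr�e $\tilde f(x \comp_j y)$ avec $\tilde f(x) \comp_j \tilde f(y)$ via la formule explicite du~\ref{paragr:def_nu}.

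R�ciproquement, � un \oo-foncteur $g : C \to \nu(K)$, on associe le morphisme $f_g : \lambda(C) \to K$ d�fini sur les g�n�rateurs par $f_g([x]) = g(x)^0_i = g(x)^1_i$ pour $x$ une $i$\nbd-cellule. La relation $[x \comp_j y] = [x] + [y]$ passe au quotient gr�ce � la formule explicite de $\comp_j$ dans $\nu(K)$ qui donne pr�cis�ment la somme au niveau sup�rieur~; la compatibilit� � la diff�rentielle et � l'augmentation r�sulte respectivement des axiomes (\emph{b}) et (\emph{c}) d'une cellule de $\nu(K)$, et la positivit� est imm�diate. On v�rifie alors sans peine que les deux constructions sont mutuellement inverses et naturelles en $C$ et $K$. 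Le principal obstacle est la v�rification de la compatibilit� aux compositions pour $\tilde f$, qui demande une comparaison soigneuse des tableaux, mais se r�duit, gr�ce � la description explicite de $\comp_j$ dans $\nu(K)$, aux identit�s globulaires �l�mentaires rappel�es ci-dessus.
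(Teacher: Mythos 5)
Votre preuve est correcte. Le texte ne d\'emontre pas cet \'enonc\'e~: il renvoie simplement au th\'eor\`eme~2.11 de Steiner. Votre construction explicite de la bijection $\Hom_{\ooCat}(C,\nu(K))\cong\Hom_{\Cda}(\lambda(C),K)$ --- avec $\tilde f(x)^0_k=f([s_k(x)])$, $\tilde f(x)^1_k=f([t_k(x)])$, $\tilde f(x)^0_i=\tilde f(x)^1_i=f([x])$ dans un sens et $f_g([x])=g(x)^0_i$ dans l'autre --- est essentiellement l'argument original de Steiner, et toutes les v\'erifications que vous indiquez (y compris l'observation que $[1_x]=0$ dans $\lambda(C)$ et l'identification colonne par colonne des tableaux pour les compositions) sont exactes.
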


\begin{proof}
Voir \cite[théorème 2.11]{Steiner}.
\end{proof}

\begin{paragr}\label{paragr:def_St} 
  Une \ndef{base} d'un complexe dirigé augmenté $K$ est un ensemble gradué
  \hbox{$B = (B_i)_{i \ge 0}$} tel que, pour tout $i \ge 0$,
  \begin{enumerate}
    \item[(\emph{a})] $B_i$ est une base du $\Z$-module $K_i$ ;
    \item[(\emph{b})] $B_i$ engendre le sous-monoïde $K^\ast_i$ de $K_i$.
  \end{enumerate}
Si un complexe dirigé augmenté $K$ admet une base $B$, on dira qu'il est \ndef{à base}, et alors pour tout $i\geq0$, la relation de préordre sur $K_i$ définie par le sous-monoïde de positivité $K^*_i$ (voir le paragraphe~\ref{paragr:def_cda}) est une relation d'ordre, et $B_i$ est l'ensemble des éléments minimaux de $K_i^\ast \sauf \{0\}$ pour cette relation d'ordre. Ainsi, si $K$ admet une base, cette base est unique et ne constitue pas une donnée supplémentaire.
\smallbreak

Soit $K$ un complexe dirigé augmenté admettant une base $B$. Pour $i\geq0$, toute $i$\nbd-chaîne $x$ de $K$ s'écrit de façon unique comme combinaison linéaire à coefficients entiers d'éléments de $B_i$. Le \ndef{support} de $x$ est l'ensemble (fini) des éléments de $B_i$ ayant un coefficient non nul dans cette combinaison linéaire. Toute $i$\nbd-chaîne $x$ de $K$ s'écrit de façon unique comme différence de deux $i$\nbd-chaînes positives à supports disjoints, $x=x_+-x_-$. On définit un tableau
\[
\atom{x}=\tabll{\atom{x}}{i},
\]
où les $\atom{x}^\varepsilon_k$ sont définis par récurrence descendante sur $k$ de $i$ à $0$ :
\begin{itemize}
\item $\atom{x}^0_i = x = \atom{x}^1_i$;
\smallskip

\item $\atom{x}^0_{k - 1} = d(\atom{x}^0_k)_-$ et $\atom{x}^1_{k - 1} = d(\atom{x}^1_k)_+$\,, \,pour $0 < k \le i$\,.
\end{itemize}
Ce tableau est une $i$-cellule de $\nu(K)$ si et seulement si, d'une part, $x$ est une $i$\nbd-chaîne positive et, d'autre part, on a les égalités $e(\atom{x}^0_0) = 1 = e(\atom{x}^1_0)$. On dit que la base $B$ de $K$ est \ndef{unitaire} si, pour tout $i \ge 0$ et tout $x$ dans~$B_i$, le tableau $\atom{x}$ est une $i$-cellule de $\nu(K)$, ce qui revient à dire qu'on a l'égalité \hbox{$e(\atom{x}^0_0) = 1 = e(\atom{x}^1_0)$}. Si le complexe dirigé augmenté $K$ est à base unitaire, pour tout élément $x$ de la base de $K$, on appelle \ndef{atome} associé à $x$ la cellule $\atom{x}$ de $\nu(K)$. On remarque que l'ensemble des objets de $\nu(K)$ est alors formé des atomes $\atom{x}$, pour $x$ dans $B_0$.
\smallbreak

Soit $K$ un complexe dirigé augmenté admettant une base $B$. On dit que la base $B$ est \ndef{sans boucles} si pour tout $i\geq0$, il existe une relation d'ordre sur $B_i$ telle que pour tout $j>i$ et tout $b\in B_j$, tout élément de $B_i$ qui est dans le support de $\atom{b}^0_i$ précède ceux qui sont dans le support de $\atom{b}^1_i$ (voir~\cite{SteinerOpetops}; l'équivalence avec la définition donnée dans~\cite{Steiner} est immédiate). On dit qu'elle est \ndef{fortement sans boucles} s'il existe une relation d'ordre sur $B$ telle que pour tout $i>0$ et tout élément $b$ de $B_i$, tout élément du support de $d_i(b)_-$ précède $b$, et $b$ précède tout élément du support de $d_i(b)_+$. On notera $\preccurlyeq_K$ la plus petite relation de préordre sur $B$ satisfaisant à ces conditions. Ainsi, dire que $B$ est fortement sans boucles revient à dire que la relation de préordre $\preccurlyeq_K$ est une relation d'ordre. Si la base $K$ est fortement sans boucles, alors elle est sans boucles (voir~\cite[proposition 3.7]{Steiner}). On appellera \ndef{complexe de Steiner} (resp. \ndef{complexe de Steiner fort}) un complexe dirigé augmenté à base unitaire sans boucles (resp. fortement sans boucles).
\end{paragr}

\begin{thm}[Steiner]\label{thm:Steiner}
Pour tout complexe de Steiner $K$, le morphisme d'adjonction
\[
\lambda(\nu(K)) \to K
\]
est un isomorphisme. En particulier, la restriction du foncteur $\nu : \Cda \to \ooCat$ à la sous-catégorie pleine de $\Cda$ formée des complexes de Steiner est un foncteur pleinement fidèle.
\end{thm}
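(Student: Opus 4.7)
The plan is to exhibit an explicit two-sided inverse to the counit $\epsilon_K : \lambda(\nu(K)) \to K$. Unwinding the description of $\lambda$ in paragraph~\ref{paragr:def_nu} and the adjunction of Proposition~\ref{prop:Steiner_adj}, one checks that $\epsilon_K$ sends a generator $[x] \in \lambda(\nu(K))_i$, associated to an $i$\nbd-cell $x = \tabld{x}{i}$ of $\nu(K)$, to its top chain $x^0_i = x^1_i$. This is well-defined as a morphism in $\Cda$ because the explicit formula for $\comp_j$ in $\nu(K)$ shows that the top chain is additive under $j$\nbd-composition of $i$\nbd-cells.

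The candidate inverse is the $\Z$\nbd-linear map $\psi : K \to \lambda(\nu(K))$ defined, degreewise on the base of $K$, by $\psi_i(b) = [\atom{b}]$ for each $b \in B_i$; this is meaningful because the base is unitary, so each $\atom{b}$ is a cell of $\nu(K)$. The identity $\epsilon_K \circ \psi = \mathrm{id}_K$ is immediate on basis elements, since $\epsilon_K([\atom{b}]) = \atom{b}^0_i = b$, and extends by $\Z$\nbd-linearity.

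The main content is the reverse identity $\psi \circ \epsilon_K = \mathrm{id}_{\lambda(\nu(K))}$: one must show that for every $i$\nbd-cell $x$ of $\nu(K)$, writing $x^0_i = \sum_{b \in B_i} n_b \cdot b$ with $n_b \ge 0$, one has
\[
[x] = \sum_{b \in B_i} n_b \, [\atom{b}] \quad\text{in } \lambda(\nu(K))_i.
\]
I would proceed by induction on $\sum_b n_b$. The case $\sum_b n_b = 0$ means $x^0_i = 0$, which forces $x$ to be an iterated unit $1_z$, and then $[x] = 0$ in $\lambda(\nu(K))$ thanks to the relation $1_z \comp_i 1_z = 1_z$. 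The case $\sum_b n_b \ge 2$ should follow by decomposing $x = y \comp_j z$ for some $j < i$ with $y^0_i$ and $z^0_i$ both positive and nonzero, summing to $x^0_i$; the induction hypothesis combined with $[y \comp_j z] = [y] + [z]$ concludes. The remaining case $\sum_b n_b = 1$, where $x^0_i$ is a single basis element $b$, reduces to identifying $[x]$ with $[\atom{b}]$ by exhibiting $x$ as a composite of $\atom{b}$ with iterated units adjusting the lower-dimensional entries of the tableau.

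The main obstacle is the decomposition lemma together with the single\nbd-basis\nbd-element case: this is precisely where loop\nbd-freeness enters. The strategy, due to Steiner, is to select a minimal element $b_0$ of the support of $x^0_i$ for the loop\nbd-free order on $B_i$, and to use the absence of cycles to construct consistent tableaux realizing the decomposition coherently in each dimension below $i$; non\nbd-loop\nbd-free complexes can have ``indecomposable'' cells that obstruct exactly this step. Once the counit is established to be an isomorphism for every Steiner complex $K$, full faithfulness of $\nu$ on the subcategory of Steiner complexes follows formally: for $K$ and $L$ Steiner,
\[
\Hom_{\Cda}(K, L) \xrightarrow{\sim} \Hom_{\Cda}(\lambda(\nu(K)), L) \xrightarrow{\sim} \Hom_{\ooCat}(\nu(K), \nu(L))
\]
by precomposition with $\epsilon_K^{-1}$ and the $\lambda \dashv \nu$ adjunction; by the triangle identity, this composite is exactly $g \mapsto \nu(g)$.
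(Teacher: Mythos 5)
The paper does not prove this statement: its proof is the single line \guillemotleft{} Voir [Steiner, th\'eor\`eme 5.6] \guillemotright, so there is no internal argument to compare yours against. Judged on its own terms, your outline does reproduce the architecture of Steiner's actual proof: the counit reads off the top chain $x^0_i$ of a tableau, the candidate inverse sends a basis element $b$ to $[\atom{b}]$ (meaningful because the basis is unitary), one composite is the identity by inspection, and everything reduces to proving $[x]=\sum_b n_b[\atom{b}]$ whenever $x^0_i=\sum_b n_b\,b$. Your degenerate case is also correct: $x^0_i=0$ forces $x=1_z$ for the truncated tableau $z$ (since $d(0)=0$ gives $x^0_{i-1}=x^1_{i-1}$), and $1_z\comp_{i-1}1_z=1_z$ kills $[1_z]$ in $\lambda(\nu(K))$. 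The final deduction of full faithfulness from the invertibility of the counit is formal and correct.

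However, the two steps you flag as \guillemotleft{} should follow \guillemotright{} and \guillemotleft{} reduces to \guillemotright{} are not auxiliary lemmas: they carry essentially all of the mathematical content. The existence of a decomposition $x=y\comp_j z$ with $y^0_i$ and $z^0_i$ positive, nonzero and summing to $x^0_i$ when the weight is at least $2$, together with the identification $[x]=[\atom{b}]$ when $x^0_i=b$, amounts to the statement that every cell of $\nu(K)$ is a composite of atoms; this is the technical core of Steiner's paper and requires using loop-freeness coherently in \emph{every} row of the tableau, not just in degree $i$ --- for instance a cell with $x^0_i=b$ may have $x^\varepsilon_k=\atom{b}^\varepsilon_k+c_k$ for nonzero positive chains $c_k$ in lower degrees, so exhibiting $x$ as $\atom{b}$ whiskered by units demands a nested induction on dimension as well as on weight. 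Naming the first move (\guillemotleft{} pick a minimal element of the support for the loop-free order \guillemotright) is not the same as carrying out that induction, and it is exactly there that a non-loop-free complex would defeat the argument. A minor omission: to conclude that the bijective counit is an isomorphism of $\Cda$ and not merely of graded abelian groups, you should record that the set-theoretic inverse $\psi$ preserves the positivity submonoids, which holds because $B_i$ generates $K^*_i$ and each $[\atom{b}]$ is positive in $\lambda(\nu(K))$.
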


\begin{proof}
Voir \cite[théorème 5.6]{Steiner}.
\end{proof}

\begin{paragr}\label{paragr:def_pol}
Soit $C$ une \oo-catégorie. Pour tout $i\geq0$, on note $\tb{i}(C)$ le \emph{$i$\nbd-tronqué bête de $C$}, sous-\oo-catégorie de $C$ ayant, pour $0\leq j\leq i$, les mêmes $j$\nbd-cellules que $C$, et pour $j>0$, que des $j$\nbd-cellules unités.
\smallbreak

Soit $E$ un ensemble de cellules de $C$, et posons $E_i=E\cap C_i$. On dit que $C$ est \emph{engendrée librement au sens des polygraphes par $E$} si
\begin{enumerate}
\item[(\emph{a})] $E_0 = C_0$ ;
\item[(\emph{b})] pour tout $i \ge 0$, toute \oo-catégorie $D$, tout \oo-foncteur $u : \tb{i}(C) \to D$ et toute application $f : E_{i+1} \to D_{i+1}$ \emph{compatible à la formation des sources et buts}, autrement dit telle que, pour tout $x$ dans $E_{i+1}$, on ait
      \[ s(f(x)) = u(s(x)) \quad\text{et}\quad t(f(x)) = u(t(x)), \]
      il existe un unique \oo-foncteur $u' : \tb{i+1}(C) \to D$ tel que
      \[ u'|\tb{i}(C) = u \quad\text{et}\quad u'|E_{i+1} = f. \]
  \end{enumerate}
\end{paragr}

\begin{thm}[Steiner]\label{thm:Steiner_pol}
Soit $K$ un complexe de Steiner. Alors la \oo-caté\-gorie~$\nu(K)$ est engendrée librement au sens des polygraphes par ses atomes, autrement dit, par les $\atom{x}$, où $x$ varie dans la base de $K$.
\end{thm}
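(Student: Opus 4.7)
Le plan est de v\'erifier les deux conditions du paragraphe~\ref{paragr:def_pol} pour $C=\nu(K)$ et $E$ l'ensemble des atomes de $K$. La condition~(\emph{a}) est imm\'ediate en vertu de la derni\`ere remarque du paragraphe~\ref{paragr:def_St}: une $0$\nbd-cellule de $\nu(K)$ est une $0$\nbd-cha\^\i ne positive $y$ v\'erifiant $e(y)=1$, et le caract\`ere unitaire de la base $B$ impose alors $y\in B_0$, d'o\`u $E_0=\nu(K)_0$.

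Pour la condition~(\emph{b}), fixons $i\geq0$, une \oo-cat\'egorie $D$, un \oo-foncteur $u:\tb{i}(\nu(K))\to D$ et une application $f:\{\atom{b}\mid b\in B_{i+1}\}\to D_{i+1}$ compatible \`a la formation des sources et buts. Le point-cl\'e est l'\'enonc\'e structurel suivant: toute $(i+1)$\nbd-cellule $c$ de $\nu(K)$ se d\'ecompose comme un compos\'e it\'er\'e, dans les op\'erations $\comp_j$ pour $0\leq j\leq i$, de $(i+1)$\nbd-cellules qui sont soit des identit\'es $1_y$ de $i$\nbd-cellules $y$ de $\nu(K)$, soit des atomes $\atom{b}$ pour $b\in B_{i+1}$, les atomes $\atom{b}$ apparaissant avec des multiplicit\'es \'egales aux coefficients de l'unique d\'ecomposition de la cha\^\i ne sommitale $c^0_{i+1}=c^1_{i+1}$ dans la base $B_{i+1}$. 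Une fois fix\'ee une telle d\'ecomposition, la valeur de toute extension $u'$ est enti\`erement d\'etermin\'ee: $u'$ doit envoyer chaque $1_y$ sur $1_{u(y)}$ et chaque $\atom{b}$ sur $f(\atom{b})$, et respecter les op\'erations $\comp_j$.

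L'existence d'une telle d\'ecomposition se prouve par r\'ecurrence sur le poids total $\sum_bn_b$ de la cha\^\i ne sommitale: si ce poids est nul, $c$ est une identit\'e; sinon, le caract\`ere sans boucles de la base permet de s\'electionner dans le support de la cha\^\i ne sommitale un \'el\'ement minimal pour l'ordre $\preccurlyeq_K$, ce qui fournit un atome $\atom{b}$ qu'on peut isoler comme facteur ext\'erieur dans un compos\'e $\comp_j$ ad\'equat et laisse une cellule r\'esiduelle de poids strictement inf\'erieur.

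Le principal obstacle est alors la coh\'erence de la construction: deux d\'ecompositions distinctes d'une m\^eme cellule $c$ doivent conduire au m\^eme \'el\'ement de $D$. C'est ici qu'intervient de mani\`ere essentielle le th\'eor\`eme~\ref{thm:Steiner} de Steiner. Puisque $\lambda(\nu(K))\cong K$, et puisque le foncteur $\lambda$ annule les identit\'es et est additif pour les $\comp_j$, le multi-ensemble d'atomes intervenant dans n'importe quelle d\'ecomposition de $c$ co\"\i ncide avec le support, compt\'e avec multiplicit\'es, de sa cha\^\i ne sommitale; toute ambigu\"\i t\'e r\'esiduelle dans le choix de la d\'ecomposition est alors contr\^ol\'ee par les axiomes \oo-cat\'egoriques d'associativit\'e, d'unit\'e, d'\'echange et de fonctorialit\'e des identit\'es, axiomes v\'erifi\'es dans $D$. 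La fonctorialit\'e du \oo-foncteur $u'$ et son unicit\'e en d\'ecoulent directement, achevant la v\'erification.
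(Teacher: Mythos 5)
La preuve donn\'ee dans l'article est une simple r\'ef\'erence au th\'eor\`eme 6.1 de Steiner; vous tentez au contraire une d\'emonstration directe, ce qui est l\'egitime, mais votre esquisse comporte deux lacunes r\'eelles. La premi\`ere concerne l'existence de la d\'ecomposition: pour isoler un atome, vous invoquez un \'el\'ement minimal du support de la cha\^{\i}ne sommitale pour la relation $\preccurlyeq_K$. Or $\preccurlyeq_K$ n'est une relation d'ordre que si la base est \emph{fortement} sans boucles; ici $K$ est seulement un complexe de Steiner, c'est-\`a-dire \`a base unitaire sans boucles, et $\preccurlyeq_K$ peut n'\^etre qu'un pr\'eordre, sans \'el\'ements minimaux. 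L'argument correct doit utiliser les ordres sur chaque $B_j$ fournis par la condition \og sans boucles\fg{}, lesquels portent sur les supports de $\atom{b}^0_j$ et $\atom{b}^1_j$ et non sur ceux de $d(b)_-$ et $d(b)_+$; c'est pr\'ecis\'ement l'objet du lemme de scindage de Steiner, que vous ne d\'emontrez pas.

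La seconde lacune est le c\oe{}ur du th\'eor\`eme: l'ind\'ependance de la valeur de $u'(c)$ vis-\`a-vis de la d\'ecomposition choisie est affirm\'ee plut\^ot que d\'emontr\'ee. Le th\'eor\`eme~\ref{thm:Steiner} et l'additivit\'e de $\lambda$ ne contr\^olent que le multi-ensemble des atomes intervenant, pas la fa\c{c}on dont ils sont assembl\'es par les op\'erations $\comp_j$; dire que \og toute ambigu\"{\i}t\'e r\'esiduelle est contr\^ol\'ee par les axiomes \oo-cat\'egoriques\fg{} revient \`a \'enoncer ce qu'il faut prouver, car rien ne garantit a priori que deux d\'ecompositions d'une m\^eme cellule soient reli\'ees par une suite d'applications de ces axiomes dans une \oo-cat\'egorie cible $D$ arbitraire. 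C'est l\`a que r\'eside toute la difficult\'e du th\'eor\`eme de Steiner, qui la r\'esout par une construction explicite du prolongement et une v\'erification de sa fonctorialit\'e, et non par un argument de confluence des d\'ecompositions. En l'\'etat, votre texte est un plan plausible, pas une preuve.
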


\begin{proof}
Voir \cite[théorème 6.1]{Steiner}.
\end{proof}

\begin{paragr}\label{paragr:def_rigide}
Soit $f : K \to L$ un morphisme de complexes dirigés augmentés à base. On dira que $f$ est un \ndef{monomorphisme rigide} s'il est un monomorphisme envoyant tout élément de la base de $K$ sur un élément de la base de $L$. 
\end{paragr}

\begin{prop}\label{prop:amalg_Stf}
Soient $K,L,M$ trois complexes de Steiner forts, de bases respectives $B^K, B^L, B^M$, et $M\to K$ et $M\to L$ deux monomorphismes rigides. On suppose que la relation d'ordre $\preccurlyeq_M$ sur $B^M$ est une relation d'ordre total. Alors:
\begin{itemize}
\item[(a)] Le complexe dirigé augmenté $K\amalg_ML$ est un complexe de Steiner fort de base $B^K\amalg_{B^M}B^L$.
\item[(b)] Le \oo-foncteur canonique $\nu(K)\amalg_{\nu(M)}\nu(L)\to\nu(K\amalg_ML)$ est un isomorphisme de $\ooCat$.
\end{itemize}
\end{prop}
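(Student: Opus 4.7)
Le plan est d'établir les deux parties successivement. Pour (a), je commencerais par identifier explicitement le complexe $K\amalg_M L$ : les sommes amalgamées dans $\Cda$ se calculent en chaque degré au niveau des groupes abéliens et des sous-monoïdes de positivité (voir~\cite[paragraphe~3.1]{joint}), et la rigidité des monomorphismes $M\to K$ et $M\to L$ entraîne immédiatement que, pour tout $i\geq0$, le $\Z$-module $(K\amalg_M L)_i$ est libre de base $B^K_i\amalg_{B^M_i}B^L_i$ et que son sous-monoïde de positivité est engendré par cette même base. L'unitarité de cette base découle du fait que les morphismes de $\Cda$ préservent différentielle et augmentation : pour tout $x$ dans $B^K$ ou $B^L$, l'atome $\atom{x}$ calculé dans $K\amalg_M L$ est l'image de l'atome homonyme calculé dans $K$ ou $L$, et la condition $e(\atom{x}^0_0)=e(\atom{x}^1_0)=1$ est donc héritée.

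Le point délicat, et précisément là où intervient l'hypothèse de totalité de $\preccurlyeq_M$, est l'établissement du caractère fortement sans boucles. J'introduirais la relation de préordre $\preccurlyeq$ sur $B^K\amalg_{B^M}B^L$ engendrée par $\preccurlyeq_K$ et $\preccurlyeq_L$ : la condition sur les supports des différentielles serait alors vérifiée par construction, l'enjeu étant l'antisymétrie. La rigidité fournit les inclusions $\preccurlyeq_M\subseteq\preccurlyeq_K|_{B^M}$ et $\preccurlyeq_M\subseteq\preccurlyeq_L|_{B^M}$ ; combinées à la totalité de $\preccurlyeq_M$ et à l'antisymétrie de $\preccurlyeq_K$ et $\preccurlyeq_L$, ces inclusions deviennent des égalités. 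En décomposant un cycle éventuel dans $\preccurlyeq$ en sous-suites maximales utilisant exclusivement $\preccurlyeq_K$ ou exclusivement $\preccurlyeq_L$, les points de transition appartiennent nécessairement à $B^M$ et forment un cycle pour $\preccurlyeq_M$ : ils sont donc tous égaux à un même $m\in B^M$. Chaque sous-suite se réduit alors à un cycle dans $\preccurlyeq_K$ ou $\preccurlyeq_L$ d'extrémités $m$, et l'antisymétrie de ces ordres permet de conclure que le cycle initial est constant.

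Pour (b), j'utiliserais un argument de propriété universelle, et je désignerais par $\phi$ le morphisme canonique à étudier. Puisque $\lambda$ préserve les colimites et que le morphisme d'adjonction $\lambda\nu(K')\to K'$ est un isomorphisme pour tout complexe de Steiner $K'$ (théorème~\ref{thm:Steiner}), l'application $\lambda(\phi)$ s'identifie à l'identité de $K\amalg_M L$. Pour remonter à $\phi$, j'invoquerais la liberté polygraphique de $\nu(K\amalg_M L)$ par rapport à ses atomes indexés par $B^K\amalg_{B^M}B^L$ (théorème~\ref{thm:Steiner_pol}), ainsi que celle de $\nu(K)$, $\nu(L)$ et $\nu(M)$. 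Un \oo-foncteur de $\nu(K)\amalg_{\nu(M)}\nu(L)$ dans une \oo-catégorie $E$ correspond à une paire d'applications polygraphiques $B^K\to E$ et $B^L\to E$ coïncidant sur $B^M$, ce qui revient à la donnée d'une application polygraphique $B^K\amalg_{B^M}B^L\to E$. Les \oo-catégories $\nu(K)\amalg_{\nu(M)}\nu(L)$ et $\nu(K\amalg_M L)$ possèdent ainsi la même propriété universelle, et $\phi$ en est l'isomorphisme canonique.
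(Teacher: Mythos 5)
Le texte ne donne pas de d\'emonstration de cette proposition : il renvoie \`a \cite[proposition~3.6 et corollaire~3.20]{joint}. Votre reconstruction suit pour l'essentiel la strat\'egie de cette r\'ef\'erence et identifie correctement le point o\`u intervient l'hypoth\`ese de totalit\'e : la rigidit\'e assure que la somme amalgam\'ee se calcule base par base et que les tableaux $\atom{x}$ sont pr\'eserv\'es (un monomorphisme rigide $f$ v\'erifiant $f(d(x)_\pm)=d(f(x))_\pm$), la relation $\preccurlyeq_{K\amalg_ML}$ est bien le pr\'eordre engendr\'e par $\preccurlyeq_K$ et $\preccurlyeq_L$, et c'est la totalit\'e de $\preccurlyeq_M$ qui force les \'egalit\'es $\preccurlyeq_K|_{B^M}=\preccurlyeq_M=\preccurlyeq_L|_{B^M}$, d'o\`u l'antisym\'etrie par d\'ecoupage des cycles aux points de transition, lesquels appartiennent n\'ecessairement \`a $B^M$.

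Pour (b), l'argument par propri\'et\'e universelle est le bon, mais l'\'equivalence entre les paires de donn\'ees polygraphiques sur $B^K$ et $B^L$ co\"{\i}ncidant sur $B^M$ et les donn\'ees polygraphiques sur $B^K\amalg_{B^M}B^L$ est pr\'ecis\'ement l'\'etape qui porte tout le poids de l'\'enonc\'e, et vous la traitez en une phrase. Il faut v\'erifier, par r\'ecurrence sur la dimension, que les \oo-foncteurs canoniques $\nu(K)\to\nu(K\amalg_ML)$ et $\nu(L)\to\nu(K\amalg_ML)$ envoient atomes sur atomes et sont compatibles aux troncations b\^etes, de sorte que la condition de compatibilit\'e aux sources et buts au cran $i+1$ pour un g\'en\'erateur de $B^K$ se lit indiff\'eremment dans $\tb{i}(\nu(K))$ ou dans $\tb{i}(\nu(K\amalg_ML))$, et que co\"{\i}ncider sur $\nu(M)$ \'equivaut \`a co\"{\i}ncider sur $B^M$ (ce qui utilise la libert\'e polygraphique de $\nu(M)$ et, de nouveau, la pr\'eservation des atomes). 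Ce n'est pas une difficult\'e de fond, et c'est essentiellement ainsi que proc\`ede \cite{joint}, mais c'est l\`a que r\'eside tout le contenu de (b). Notez enfin que l'observation initiale sur $\lambda(\phi)$ ne fait pas avancer la preuve : $\lambda$ n'\'etant pas fid\`ele, savoir que $\lambda(\phi)$ est un isomorphisme ne dit rien sur $\phi$ ; vous le reconnaissez d'ailleurs implicitement en passant aussit\^ot \`a l'argument polygraphique.
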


\begin{proof}
Voir \cite[proposition~3.6 et corollaire~3.20]{joint}.
\end{proof}

\begin{paragr}\label{paragr:def_cX}
\`A tout ensemble simplicial $X$, on associe un complexe dirigé augmenté $\cn X$ comme suit. Le complexe sous-jacent à $\cn X$ est le complexe de chaînes normalisé de $X$, qui a comme base les simplexes non dégénérés de $X$. Les sous-monoïdes de positivité sont les sous-monoïdes engendrés par les simplexes non dégénérés. L'augmentation associe à un 0-simplexe l'entier 1. On en déduit un foncteur $\cn :\EnsSimp\to\Cda$. On remarque que pour tout ensemble simplicial $X$, le complexe dirigé augmenté $\cn X$ est à base, et que cette base est formée des simplexes non dégénérés de $X$.
\smallbreak

On va s'intéresser plus particulièrement à la restriction de ce foncteur à $\cDelta$. Pour $m\geq0$, le complexe dirigé augmenté $\cn \Deltan{m}$ se décrit comme suit. Pour $p\geq0$, un $p$\nbd-simplexe non dégénéré de l'ensemble simplicial représentable $\Deltan{m}$ est une application strictement croissante $\Deltan{p}\to\Deltan{m}$. Ainsi, pour $p\geq0$, $(\cn {\Deltan{m}})_p$ (resp.~$(\cn {\Deltan{m}})^*_p$) s'identifie au groupe (resp. au monoïde) commutatif libre engendré par la famille des \hbox{$(p+1)$}\nbd-uplets 
\[
(i_0,i_1,\dots,i_p)\,,\quad 0\leq i_0<i_1<\cdots<i_p\leq m\,.
\]
La différentielle est définie par 
\[
d(i_0,i_1,\dots, i_p)=\textstyle\sum\limits_{0\leq k\leq p}(-1)^k(i_0,\dots, \widehat{i}_k,\dots, i_p)\,,\quad p>0\,,
\]
où $(i_0,\dots, \widehat{i}_k,\dots, i_p)=(i_0,\dots, i_{k-1},i_{k+1},\dots, i_p)$, et l'augmentation par $e(i_0)=1$. On remarque que pour $p>m$, on a $\cn (\Deltan{m})_p=0$. Le complexe dirigé augmenté $\cn {\Deltan{m}}$ est un complexe de Steiner fort, et de plus, la relation $\preccurlyeq_{\cn \Deltan{m}}$ est une relation d'ordre \emph{total} (voir~\cite[corollaire~6.1]{StreetParComp} et~\cite[exemple~3.8]{Steiner}). 
\smallbreak

Si $\varphi:\Deltan{m}\to\Deltan{n}$ est un morphisme de $\cDelta$, le morphisme $\cn (\varphi):\cn \Deltan{m}\to \cn \Deltan{n}$ est défini par
\[
(i_0,\dots,i_p)\mapsto(\varphi(i_0),\dots,\varphi(i_p))\,,\qquad\hbox{pour}\quad0\leq i_0<\cdots<i_p\leq m\,,\quad p\geq0\,,
\]
avec la convention que pour $0\leq j_0\leq\cdots\leq j_p\leq n$, s'il existe $k$ tel que $0\leq k<p$ et tel que $j_k=j_{k+1}$, alors $(j_0,\dots,j_p)=0$.
\end{paragr}

\begin{paragr}\label{paragr:def_nrf_Str}
En composant la restriction à $\cDelta$ du foncteur $\cn $ avec le foncteur $\nu$, on obtient un objet cosimplicial
\[
\xymatrix{
\cDelta\ar[r]^-{\cn |\cDelta}
&\Cda\ar[r]^-{\nu}
&\ooCat\,
}
\]
dans $\ooCat$, l'objet cosimplicial $\On{}$ des orientaux de
Street~\cite{StreetOrient, StreetParComp, Steiner, SteinerOrient}. Pour $n\geq0$, la \oo-catégorie $\On{n}$ est une $n$\nbd-catégorie, autrement dit les $i$\nbd-cellules de $\On{n}$ pour $i>n$ sont des unités. En basse dimension, on a
\[\begin{aligned}
\On{0}&=\Deltan{0}=\quad\{0\}\ ,
\\
\On{1}&=\Deltan{1}=\quad0\toto1\ ,
\\
\On{2}&=\quad\left.\raise 21pt\vbox{
\xymatrixrowsep{-.07pc}
\xymatrixcolsep{1.pc}
\xymatrix{
&1\ar[dddddr]
\\
\\
&
\\
&
\\
&\ar@{=>}[uu]^{}
\\
0\ar[uuuuur]^{}\ar[rr]_{}
&\vrule height 6pt width 0pt\ar@{=>}[uuu]^{}
&2
}
}\right.\ \ ,
\\
\On{3}&=\quad\left.\raise 30pt\vbox{
\UseTwocells
\xymatrixcolsep{.2pc}
\xymatrixrowsep{.0pc}
\xymatrix{
&&&1\ar[dddrrr]
&&&&&&&
&&&1\ar[dddrrr]\ar[dddddd]
\\&&&&&&&&&&
\\&&
&&\lltwocell<>
&&&&&&
&&&&&&
\\
0\ar[dddrrr]\ar[uuurrr]\ar[rrrrrr]
&&&
&&&2\ar[dddlll]
&\ar@3{->}[rr]
&&&
0\ar[dddrrr]\ar[uuurrr]
&&&&&&2\ar[dddlll]
\\&&&&&&&&&&
&&&\uulltwocell<>
&\uutwocell<>
\\
&&&\uultwocell<>
&&&
\\
&&&3
&&&&&&&
&&&3
}
}\right.\ \ .
\end{aligned}\]
Pour la description explicite des $n$\nbd-catégories $\On{n}$ pour $4\leq n\leq 6$, voir~\cite{StreetOrient}.
\smallbreak

L'objet cosimplicial des orientaux définit un foncteur nerf
\[
N_\infty:\ooCat\toto\EnsSimp\,,\qquad C\longmapsto (\Deltan{m}\mapsto\Hom_{\ooCat}(\On{m},C)=\Hom_{\ooCat}(\nu \cn \Deltan{m},C))\,,
\]
le nerf de Street~\cite{StreetOrient}. On vérifie facilement que si $C$ est une catégorie, considérée comme \oo-catégorie dont les $i$\nbd-cellules sont des unités pour \hbox{$i>1$}, on a \hbox{$N_\infty(C)=\NGr(C)$}. Ainsi, pour alléger la notation, on notera aussi $\Nrf$ le nerf de Street~$N_\infty$. On définit les \ndef{équivalences faibles} dans $\ooCat$ comme étant les \oo-foncteurs dont le nerf de Street est une équivalence faible simpliciale.
\end{paragr}

\begin{paragr}
On définit le produit tensoriel $K\otimes L$ de deux complexes dirigés augmentés $K$ et $L$ comme suit. Le complexe sous-jacent à $K\otimes L$ est le produit tensoriel des complexes sous-jacents à $K$ et $L$, de sorte qu'en particulier, on a
\[
(K \otimes L)_p\kern 4pt =\kern-4pt
\textstyle\bigoplus\limits_{\substack{i + j = p\\i \ge 0,\, j \ge 0}} \kern-4ptK_i \otimes L_j\,,\qquad\text{pour $p \ge 0$\,.}
\]
Pour $p\geq0$, le sous-monoïde de positivité $(K \otimes L)^\ast_p$ est le sous-monoïde de~$(K \otimes L)_p$ engendré par les éléments de la forme $x \otimes y$, avec $x$ dans~$K^\ast_i$, $y$ dans $L^\ast_j$ et~$i + j = p$. L'augmentation $e:(K\otimes L)_0=K_0\otimes L_0\to \Z$ est définie par $e(x \otimes y) = e(x)e(y)$, pour $x$ dans $K_0$ et $y$ dans $L_0$. 
\end{paragr}

\begin{prop}\label{prop:produit_Stf}
Si $K$ et $L$ sont deux complexes de Steiner forts, il en est de même pour $K\otimes L$.
\end{prop}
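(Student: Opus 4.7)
The plan is to exhibit the evident basis of $K\otimes L$ and verify the three properties required to be a complex of strong Steiner: basis, unitarity, and strong loop-freeness.

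First, I would set $B^{K\otimes L}_p=\{b\otimes b'\mid b\in B^K_i,\ b'\in B^L_j,\ i+j=p\}$. Since $B^K_i$ and $B^L_j$ are $\Z$-bases of the free abelian groups $K_i$ and $L_j$, this is a $\Z$-basis of $(K\otimes L)_p$. Moreover, by the very definition of the positivity submonoid of a tensor product and the fact that $B^K_i$ (resp.\ $B^L_j$) generates $K^\ast_i$ (resp.\ $L^\ast_j$), this set also generates $(K\otimes L)^\ast_p$ as a monoid. This establishes that $K\otimes L$ admits the product basis.

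Next, for unitarity, one must show that for every $b\in B^K_i$ and $b'\in B^L_j$, the table $\atom{b\otimes b'}$ is an $(i+j)$-cell of $\nu(K\otimes L)$, i.e.\ that $e(\atom{b\otimes b'}^0_0)=1=e(\atom{b\otimes b'}^1_0)$. The natural route is an induction using the Leibniz rule $d(x\otimes y)=d(x)\otimes y+(-1)^{|x|}x\otimes d(y)$, aiming to show that $\atom{b\otimes b'}^0_0$ is a basis element of the form $\beta\otimes\beta'$ with $\beta\in B^K_0$, $\beta'\in B^L_0$, so the augmentation evaluates to $e(\beta)e(\beta')=1$, and likewise for $\atom{b\otimes b'}^1_0$. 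The key step is to check that at each descent in the recursive formula defining $\atom{\cdot}^\varepsilon_{k-1}$, taking the positive or negative part of $d$ commutes well enough with the tensor decomposition, a verification which reduces to the unitarity of the atoms $\atom{b}$ in $\nu(K)$ and $\atom{b'}$ in $\nu(L)$.

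Finally, for strong loop-freeness, I would fix total orders $\preccurlyeq_K$ on $B^K$ and $\preccurlyeq_L$ on $B^L$ witnessing the strong loop-freeness of $K$ and $L$ respectively, and define an order on $B^{K\otimes L}$ by a carefully chosen refinement (for instance, a lexicographic-type order taking into account the bidegree $(i,j)$ of $b\otimes b'$ and using $\preccurlyeq_K$ on the first factor, $\preccurlyeq_L$ on the second, with the precise ordering between incomparable pairs designed to fit the sign pattern in the Leibniz rule). I would then unpack, for $b\otimes b'$ of total degree $p=i+j$, the identities
\[
d(b\otimes b')_+ \;=\; d(b)_+\otimes b' + (-1)^{|b|}_+ \bigl(b\otimes d(b')\bigr),\qquad d(b\otimes b')_- \;=\; d(b)_-\otimes b' + (-1)^{|b|}_- \bigl(b\otimes d(b')\bigr),
\]
interpreted with the appropriate split according to the parity of $|b|=i$, and verify term by term that every basis element in the support of $d(b\otimes b')_-$ precedes $b\otimes b'$, and that $b\otimes b'$ precedes every basis element in the support of $d(b\otimes b')_+$, using the strong loop-freeness of $K$ and $L$.

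The main obstacle will be step three: the parity-dependent sign in the Leibniz rule means that the role of ``positive versus negative part'' of $d(b')$ \emph{swaps} according to whether $|b|$ is even or odd, so the ordering on $B^{K\otimes L}$ must be tuned so that this swap is absorbed by the product order. Finding and justifying such a tuning — most likely by ordering first by the second-coordinate degree $j$, or by a variant adapted to parity — is the delicate combinatorial point. Once this is done, the verification becomes a case distinction over which of the four summands $d(b)_\pm\otimes b'$ and $b\otimes d(b')_\pm$ ends up in $d(b\otimes b')_+$ or $d(b\otimes b')_-$, each handled by the hypothesis of strong loop-freeness on the respective factor.
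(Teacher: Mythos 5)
The genuine gap is in your third step, which is the heart of the proposition: you never actually define the order on the basis of $K\otimes L$, and you explicitly defer ``finding and justifying such a tuning''. Since the whole content of strong loop-freeness is the existence of such an order, what you have written is a statement of the difficulty rather than a resolution of it. (Note also that the paper itself does not prove this proposition; it refers to Steiner's exemple~3.10, where precisely this order is constructed.) Two smaller points. First, your displayed ``Leibniz rule for the $\pm$ parts'' is not well formed as written, but the intended identities do hold: for $b\in B^K_i$, $b'\in B^L_j$, the two summands $d(b)\otimes b'$ and $b\otimes d(b')$ have supports contained in the distinct bidegrees $(i-1,j)$ and $(i,j-1)$, hence disjoint supports, so the positive and negative parts of their signed sum really are computed termwise --- this must be said explicitly, since in general $(x+y)_\pm\neq x_\pm+y_\pm$. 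One then gets $d(b\otimes b')_-=d(b)_-\otimes b'+b\otimes d(b')_-$ and $d(b\otimes b')_+=d(b)_+\otimes b'+b\otimes d(b')_+$ when $i$ is even, with the second summands swapped when $i$ is odd. Second, for unitarity you do not need $\atom{b\otimes b'}^\varepsilon_0$ to be a single basis element: it suffices to establish the closed formula $\atom{b\otimes b'}^\varepsilon_k=\sum_{p+q=k}\atom{b}^\varepsilon_p\otimes\atom{b'}^{\varepsilon_p}_q$ (with $\varepsilon_p$ determined by $\varepsilon$ and the parity of $p$), whence in degree $0$ the augmentation is $e(\atom{b}^\varepsilon_0)\,e(\atom{b'}^{\varepsilon'}_0)=1$; but that formula is itself an induction you only gesture at.

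Here is the missing order. Choose orders $\preccurlyeq_K$ on $B^K$ and $\preccurlyeq_L$ on $B^L$ witnessing strong loop-freeness, and declare $b\otimes b'\sqsubseteq c\otimes c'$ if either $b$ strictly precedes $c$ for $\preccurlyeq_K$, or $b=c$ and ($b'\preccurlyeq_L c'$ when the degree of $b$ is even, $c'\preccurlyeq_L b'$ when it is odd). This is an order (it is lexicographic, with the second coordinate compared in a fixed order or its opposite on each fibre $\{b\otimes\mathord{-}\}$), and the verification is exactly the four-case check you describe: elements $a\otimes b'$ with $a$ in the support of $d(b)_\mp$ are handled by the first clause and the hypothesis on $K$; elements $b\otimes a'$ with $a'$ in the support of $d(b')_\mp$ land in $d(b\otimes b')_\mp$ or $d(b\otimes b')_\pm$ according to the parity of $|b|$, and the parity twist in the second clause is tuned precisely so that the hypothesis on $L$ gives the required comparison in both cases. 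With this order written down, and the tensor formula for atoms established, your outline does become a complete proof along the lines of Steiner's.
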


\begin{proof}
Voir \cite[exemple~3.10]{Steiner}.
\end{proof}

\begin{paragr}
La catégorie $\Cda$, munie du produit tensoriel $\otimes$, est une catégorie monoïdale, d'objet unité $\cn \Deltan{0}$. En vertu de la proposition précédente, la sous-catégorie pleine de $\Cda$ formée des complexes de Steiner forts en est une sous-catégorie monoïdale.
\end{paragr}

\begin{thm}\label{thm:produit_Gray}
Il existe une structure de catégorie monoïdale sur $\ooCat$, unique à isomorphisme monoïdal près, de produit 
\[ 
\otimes:\ooCat\times\ooCat\toto\ooCat\,,
\]
satisfaisant aux deux conditions suivantes:
\begin{itemize}
\item[(a)] le foncteur $\otimes:\ooCat\times\ooCat\to\ooCat$ commute aux petites limites inductives en chaque variable;
\item[(b)] la restriction du foncteur $\nu:\Cda\to\ooCat$ à la sous-catégorie pleine de $\Cda$ formée des complexes de Steiner forts est un foncteur monoïdal. 
\end{itemize}
En particulier, l'unité de cette structure monoïdale sur $\ooCat$ est la \oo-catégorie ponctuelle $\On{0}=\nu \cn \Deltan{0}$.
\end{thm}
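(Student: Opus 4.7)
Le plan est de proc\'eder par extension de Kan \`a gauche depuis la sous-cat\'egorie pleine de $\ooCat$ form\'ee des $\nu(K)$ pour $K\in\Stf$, sur laquelle la condition~(b) et la proposition~\ref{prop:produit_Stf} prescrivent d\'ej\`a l'action du produit tensoriel cherch\'e.

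L'ingr\'edient cl\'e est une propri\'et\'e de \emph{densit\'e} : pour toute \oo-cat\'egorie $C$, le morphisme canonique
\[
\varinjlim_{\nu(K)\to C,\,K\in\Stf}\nu(K)\longrightarrow C
\]
est un isomorphisme. Cette densit\'e se ram\`ene \`a celle des disques globulaires $\Dn{n}$ dans $\ooCat$ (chaque \oo-cat\'egorie \'etant colimite canonique des disques associ\'es \`a ses cellules) combin\'ee \`a l'observation que chaque $\Dn{n}$ est de la forme $\nu(K)$ pour un complexe de Steiner fort~$K$. Admise cette densit\'e, l'unicit\'e \`a isomorphisme mono\"{\i}dal pr\`es est imm\'ediate : la condition~(a) force la valeur d'un tel produit tensoriel sur tout couple $(C,D)$ \`a partir de ses valeurs sur les couples $(\nu(K),\nu(L))$ pour $K,L\in\Stf$, valeurs elles-m\^emes prescrites par la condition~(b) sous la forme $\nu(K)\otimes\nu(L)\cong\nu(K\otimes L)$.

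Pour l'existence, je d\'efinirais
\[
C\otimes D:=\varinjlim_{\substack{\nu(K)\to C,\,K\in\Stf\\\nu(L)\to D,\,L\in\Stf}}\nu(K\otimes L),
\]
extension de Kan \`a gauche en chaque variable du bifoncteur $(K,L)\mapsto\nu(K\otimes L)$ de $\Stf\times\Stf$ vers $\ooCat$, bien d\'efini par la proposition~\ref{prop:produit_Stf}. La cocontinuit\'e en chaque variable (condition~(a)) est automatique pour une telle extension. La condition~(b) r\'esulte du th\'eor\`eme~\ref{thm:Steiner} : puisque $\nu$ est pleinement fid\`ele sur $\Stf$, pour $K,L\in\Stf$ fix\'es, le couple $(K,L)$ est terminal dans la cat\'egorie d'indexation de la colimite d\'efinissant $\nu(K)\otimes\nu(L)$, d'o\`u l'isomorphisme naturel $\nu(K)\otimes\nu(L)\cong\nu(K\otimes L)$.

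Les contraintes mono\"{\i}dales se transportent depuis $\Cda$ par le m\^eme principe. L'unit\'e candidate est $\On{0}=\nu\cn\Deltan{0}$, image par $\nu$ de l'unit\'e de $\Cda$. Les isomorphismes d'associativit\'e et d'unit\'e sur $\Stf$ (provenant de la structure mono\"{\i}dale de $\Cda$) induisent, par cocontinuit\'e et densit\'e, des isomorphismes naturels sur $\ooCat$ ; les axiomes du pentagone et du triangle sur $\ooCat$ se r\'eduisent, par densit\'e, \`a leur validit\'e sur $\Stf$, laquelle d\'ecoule de celle d\'ej\`a connue sur $\Cda$. Le principal obstacle sera la v\'erification soigneuse de la densit\'e de $\nu(\Stf)$ dans $\ooCat$ et la compatibilit\'e des isomorphismes de coh\'erence Kan-\'etendus avec ceux provenant de $\Cda$ ; une fois ces points acquis, l'existence de la structure mono\"{\i}dale recherch\'ee en d\'ecoule.
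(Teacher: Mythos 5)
Le texte ne d\'emontre pas ce th\'eor\`eme : il renvoie \`a \cite[th\'eor\`eme A.15]{joint}. Votre strat\'egie --- extension de Kan \`a gauche du bifoncteur $(K,L)\mapsto\nu(K\otimes L)$ le long de la sous-cat\'egorie pleine dense form\'ee des $\nu(K)$, pour $K$ un complexe de Steiner fort --- est bien celle de la preuve cit\'ee, et votre argument d'unicit\'e est correct une fois la densit\'e acquise. Deux points de votre esquisse constituent toutefois de v\'eritables lacunes. D'abord, la densit\'e ne se ram\`ene pas aux disques globulaires : la cat\'egorie des globes n'est \emph{pas} dense dans $\ooCat$, car le nerf restreint correspondant est le foncteur \og ensemble globulaire sous-jacent\fg{}, qui est fid\`ele mais non plein (un morphisme entre ensembles globulaires sous-jacents ne respecte pas n\'ecessairement compositions et unit\'es) ; autrement dit, une \oo-cat\'egorie n'est pas en g\'en\'eral la colimite canonique de ses disques. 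La bonne sous-cat\'egorie est celle des sommes globulaires, c'est-\`a-dire la cat\'egorie $\Theta$ de Joyal, dont la densit\'e dans $\ooCat$ est un th\'eor\`eme non trivial (d\^u \`a Berger) ; combin\'ee au fait que tout objet de $\Theta$ est de la forme $\nu(K)$ avec $K$ de Steiner fort, elle donne la densit\'e de $\nu(\Stf)$.

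Ensuite, la cocontinuit\'e en chaque variable de l'extension de Kan n'est nullement \og automatique\fg{}. En une variable, on a $\mathrm{Lan}_{\nu}F=(\mathrm{Lan}_{y}F)\circ N$, o\`u $N:\ooCat\to\pref{\Stf}$ d\'esigne le nerf restreint et $y$ le plongement de Yoneda ; seul le facteur $\mathrm{Lan}_{y}F$ commute aux petites limites inductives, et $N$ ne les pr\'eserve pas en g\'en\'eral. C'est pr\'ecis\'ement l\`a que r\'eside tout le contenu du th\'eor\`eme : il faut, ou bien v\'erifier que le bifoncteur \'etendu transforme en colimites les colimites canoniques pr\'esentant $C$ et $D$, ou bien, comme dans \cite{joint}, exhiber des adjoints \`a droite en chaque variable (les objets de morphismes lax et oplax), ce qui repose sur des propri\'et\'es fines du produit tensoriel des complexes de Steiner. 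Sans cet ingr\'edient, votre construction fournit un bifoncteur prolongeant $(K,L)\mapsto\nu(K\otimes L)$, mais rien ne garantit qu'il satisfait \`a la condition (a), ni donc que les contraintes de coh\'erence se transportent comme vous l'indiquez.
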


\begin{proof}
Voir \cite[théorème A.15]{joint}.
\end{proof}

Le produit tensoriel du théorème précédent a été construit pour la première fois par Al-Agl et Steiner~\cite{AlAglSteiner}, généralisant une construction analogue pour les \oo-groupoïdes due à Brown et Higgins~\cite{BrownTensor}. Deux autres constructions sont données par Crans dans sa thèse~\cite{CransThese}.

\section{Transformations oplax et tranches \pdfoo-catégoriques}\label{section:prelim_cotr}

On va adopter la convention suivante pour les opérations de composition dans une \oo-catégorie. Si $i<j$, l'opération \smash{$\comp^{}_i$} sera prioritaire sur l'opération \smash{$\comp^{}_j$}. Par exemple:
\[
u\comp^{}_0v\comp^{}_1w\comp^{}_2x\comp^{}_1y\comp^{}_0z=\bigl((u\comp^{}_0v)\comp^{}_1w\bigr)\comp^{}_2\bigl(x\comp^{}_1(y\comp^{}_0z)\bigr)\,,
\]
lorsque le membre de droite a un sens.

\begin{paragr}
Soient $C$ une \oo-catégorie et $c$ un objet de $C$. La \oo-catégorie $\cotr{C}{c}$ se décrit comme suit. De façon informelle, les objets, $1$\nbd-cellules et $2$\nbd-cellules de $\cotr{C}{c}$ sont respectivement les diagrammes de la forme
\[
   \xymatrix@R=5pc{
   c \ar[d]_{\alpha_1} \\
   a_0 \pbox{,}
   }
   \qquad
   \qquad
    \shorthandoff{;}
    \xymatrix@C=2.5pc@R=5pc{
      & c
    \ar[dl]_{\alpha^0_1}_{}="f" \ar[dr]^{\alpha^1_1}_{}="s" \\
      a^0_0 \ar[rr]_{a_1} & & a^1_0
      \ar@{}"s";[ll]_(.15){}="ss"
      \ar@{}"s";[ll]_(.55){}="tt"
      \ar@<0.0ex>@2"ss";"tt"_{\alpha_2} \pbox{,}
    }
   \qquad
   \qquad
    \shorthandoff{:;}
    \xymatrix@C=2.5pc@R=5pc{
      & c
    \ar[dl]_{\alpha^0_1}_{}="f" \ar[dr]^{\alpha^1_1}_{}="s" \\
      a^0_0
      \ar@/^2ex/@{.>}[rr]^(.27){a^0_1}^{}="0"
      \ar@/_2ex/[rr]_(.30){a^1_1}^{}="1"
      \ar@{:>}"0";"1"^{\,a_2}
      & & a^1_0
      \ar@{}"s";[ll]_(.15){}="ss"
      \ar@{}"s";[ll]_(.55){}="tt"
      \ar@<-1.5ex>@/^-2ex/@{:>}"ss";"tt"_(.30){\alpha^0_2}_{}="11"
      \ar@<-0ex>@/^2ex/@2"ss";"tt"^(.20){\!\!\alpha^1_2}^{}="00"
      \ar@3"00";"11"_{\alpha_3}
      \pbox{.}
    }
  \]
Plus formellement, pour $i\geq0$, les $i$\nbd-cellules de $\cotr{C}{c}$ sont les tableaux
\[
(a,\alpha)=
\begin{pmatrix}
(a^0_0,\alpha^0_1)&\cdots&(a^0_{i-1},\alpha^0_i)&(a^0_i,\alpha^0_{i+1})\cr
\noalign{\vskip 5pt}
(a^1_0,\alpha^1_1)&\cdots&(a^1_{i-1},\alpha^1_i)&(a^1_i,\alpha^1_{i+1})
\end{pmatrix}\]
avec $a^0_i=a^1_i$, $\alpha^0_{i+1}=\alpha^1_{i+1}$, où pour $\varepsilon=0,1$, $a^\varepsilon_0$ est un objet de $C$ et
\[\begin{matrix}
a^\varepsilon_k:a^0_{k-1}\toto a^1_{k-1}\,,\hfill&0<k\leq i\,,\hfill\cr
\noalign{\vskip 5pt}
\alpha^\varepsilon_k:\alpha^1_{k-1}\toto a^\varepsilon_{k-1}\comp^{}_0\alpha^0_1\comp^{}_1\cdots\comp^{}_{k-2}\alpha^0_{k-1}\,,\kern20pt&0<k\leq i+1\,,
\end{matrix}\] 
sont des $k$\nbd-cellules de $C$, où par convention, on a posé $\alpha^1_0=c$, de sorte que pour $k=1$, on a $\alpha^\varepsilon_1:c\to a^\varepsilon_0$. On posera souvent $a_i=a^0_i=a^1_i$ et $\alpha_{i+1}=\alpha^0_{i+1}=\alpha^1_{i+1}$
Les tableaux correspondant aux diagrammes du début du paragraphe sont respectivement
\[
\begin{pmatrix}
(a_0,\alpha_1)\cr
\noalign{\vskip 5pt}
(a_0,\alpha_1)
\end{pmatrix},
\qquad
\begin{pmatrix}
(a^0_0,\alpha^0_1)&(a_1,\alpha_2)\cr
\noalign{\vskip 5pt}
(a^1_0,\alpha^1_1)&(a_1,\alpha_2)
\end{pmatrix},
\qquad
\begin{pmatrix}
(a^0_0,\alpha^0_1)&(a^0_1,\alpha^0_2)&(a_2,\alpha_3)\cr
\noalign{\vskip 5pt}
(a^1_0,\alpha^1_1)&(a^1_1,\alpha^1_2)&(a_2,\alpha_3)
\end{pmatrix}.
\]
Si $i>0$, la source de la $i$\nbd-cellule $(a,\alpha)$ de $\cotr{C}{c}$ est le tableau
\[
s(a,\alpha)=
\begin{pmatrix}
(a^0_0,\alpha^0_1)&\cdots&(a^0_{i-2},\alpha^0_{i-1})&(a^0_{i-1},\alpha^0_{i})\cr
\noalign{\vskip 5pt}
(a^1_0,\alpha^1_1)&\cdots&(a^1_{i-2},\alpha^1_{i-1})&(a^0_{i-1},\alpha^0_{i})
\end{pmatrix}\phantom{.}\]
et le but le tableau
\[
t(a,\alpha)=
\begin{pmatrix}
(a^0_0,\alpha^0_1)&\cdots&(a^0_{i-2},\alpha^0_{i-1})&(a^1_{i-1},\alpha^1_{i})\cr
\noalign{\vskip 5pt}
(a^1_0,\alpha^1_1)&\cdots&(a^1_{i-2},\alpha^1_{i-1})&(a^1_{i-1},\alpha^1_{i})
\end{pmatrix}.\]
Pour $i\geq0$, l'unité de $(a,\alpha)$ est le tableau 
\[
1_{(a,\alpha)}=
\begin{pmatrix}
(a^0_0,\alpha^0_1)&\cdots&(a^0_{i-1},\alpha^0_i)&(a^0_i,\alpha^0_{i+1})&(1_{a^0_i},1_{\alpha^0_{i+1}})\cr
\noalign{\vskip 5pt}
(a^1_0,\alpha^1_1)&\cdots&(a^1_{i-1},\alpha^1_i)&(a^1_i,\alpha^1_{i+1})&(1_{a^1_i},1_{\alpha^1_{i+1}})
\end{pmatrix}.\]
Pour $j$ un entier, $0\leq j<i$, et 
\[
(b,\beta)=
\begin{pmatrix}
(b^0_0,\beta^0_1)&\cdots&(b^0_{i-1},\beta^0_i)&(b^0_i,\beta^0_{i+1})\cr
\noalign{\vskip 5pt}
(b^1_0,\beta^1_1)&\cdots&(b^1_{i-1},\beta^1_i)&(b^1_i,\beta^1_{i+1})
\end{pmatrix}\]
une deuxième $i$\nbd-cellule, $j$\nbd-composable avec $(a,\alpha)$, autrement dit telle que
\[
a^\varepsilon_k=b^\varepsilon_k\,,\ \alpha^\varepsilon_{k+1}=\beta^\varepsilon_{k+1}\,,\quad\varepsilon=0,1\,,\ 0\leq k<j\,,
\quad\hbox{et}\quad a^0_j=b^1_j\,,\ \alpha^0_{j+1}=\beta^1_{j+1}\,,
\]
la $j$\nbd-composition est définie par le tableau
\[
(a,\alpha)\comp^{}_j(b,\beta)=
\begin{pmatrix}
(b^0_0,\beta^0_1)\kern-3pt&\cdots\kern-3pt&(b^0_{j},\beta^0_{j+1})\kern-3pt&(a^0_{j+1}\comp^{}_jb^0_{j+1},\gamma^0_{j+2})\kern-3pt&\cdots\kern-3pt&(a^0_i\comp^{}_jb^0_i,\gamma_{i+1}^0)\cr
\noalign{\vskip 5pt}
(a^1_0,\alpha^1_1)\kern-3pt&\cdots\kern-3pt&(a^1_{j},\alpha^1_{j+1})\kern-3pt&(a^1_{j+1}\comp^{}_jb^1_{j+1},\gamma^1_{j+2})\kern-3pt&\cdots\kern-3pt&(a^1_i\comp^{}_jb^1_i,\gamma_{i+1}^1)
\end{pmatrix},
\]
où pour $\varepsilon=0,1$,
\[
\begin{aligned}
\gamma^\varepsilon_{j+2}&=a^\varepsilon_{j+1}\comp^{}_0\beta^0_1\comp^{}_1\cdots\comp^{}_{j-1}\beta^0_j\comp^{}_j\beta^\varepsilon_{j+2}\comp^{}_{j+1}\alpha^\varepsilon_{j+2}\,,\cr
\noalign{\vskip 3pt}
\gamma^\varepsilon_{k}&=a^1_{j+1}\comp^{}_0\beta^0_1\comp^{}_1\cdots\comp^{}_{j-1}\beta^0_j\comp^{}_j\beta^\varepsilon_{k}\comp^{}_{j+1}\alpha^\varepsilon_{k}\,,\qquad j+2<k\leq i+1\,.
\end{aligned}
\]
On remarquera que l'information contenue dans les tableaux représentant les
cellules de $\cotr{C}{c}$ est largement redondante. La $i$\nbd-cellule
$(a,\alpha)$ est déjà déterminée par les cellules $\alpha^0_1,\dots,\alpha^0_{i+1}$ et $a^0_i$ de $C$. En effet, pour $0\leq k<i$, on a $a^0_k=s_k(a^0_i)$, $a^1_k=t_k(a^0_i)$ et pour $0<k\leq i$, on a $\alpha^1_k=s(\alpha^0_{k+1})$.   Néanmoins, cette représentation est plus symétrique et rend plus simples les formules exprimant la source, le but, l'unité et la composition des cellules. De plus, elle rappelle  la représentation par tableaux des cellules de $\nu(K)$, pour $K$ un complexe dirigé augmenté, ce qui n'est pas fortuit.
\smallbreak

On a un \oo-foncteur d'oubli $\cotr{C}{c}\to C$, défini par $(a,\alpha)\mapsto a_i=a^0_i=a^1_1$. Pour tout \oo-foncteur $A\to C$, on pose $\cotr{A}{c}=\cotr{C}{c}\times_CA$. Ainsi, les $i$\nbd-cellules de $\cotr{A}{c}$ sont données par des tableaux du même type
\[
(a,\alpha)=
\begin{pmatrix}
(a^0_0,\alpha^0_1)&\cdots&(a^0_{i-1},\alpha^0_i)&(a^0_i,\alpha^0_{i+1})\cr
\noalign{\vskip 5pt}
(a^1_0,\alpha^1_1)&\cdots&(a^1_{i-1},\alpha^1_i)&(a^1_i,\alpha^1_{i+1})
\end{pmatrix},
\]
mais où cette fois-ci, pour $\varepsilon=0,1$, $a^\varepsilon_0$ est un objet de $A$, pour $0<k\leq i$, 
\[
a^\varepsilon_k:a^0_{k-1}\toto a^1_{k-1}
\] 
est une $k$\nbd-cellule de $A$, et pour $0<k\leq i+1$
\[
\alpha^\varepsilon_k:\alpha^1_{k-1}\toto u(a^\varepsilon_{k-1})\comp^{}_0\alpha^0_1\comp^{}_1\cdots\comp^{}_{k-2}\alpha^0_{k-1}
\]
est une $k$-cellule de $C$, toujours en posant par convention $\alpha^1_0=c$, de sorte que pour $k=1$, on a $\alpha^\varepsilon_1:c\to u(a^\varepsilon_0)$. Les formules pour les sources, buts, unités et compositions sont tout à fait analogues à celles pour $\cotr{C}{c}$.
\end{paragr}

\begin{paragr}\label{paragr:def_tr_opl}
Soient $u,v:A\to C$ deux \oo-foncteurs de même source et même but. Une prétransformation oplax de $u$ vers $v$ consiste en la donnée, pour tout $i \ge 0$ et toute $i$-cellule $a$ de $A$, d'une $(i+1)$-cellule
  \[
    \alpha_a :
    \alpha_{t_{i-1}(a)} \comp^{}_{i-1} \cdots \comp^{}_1 \alpha_{t_0(a)} \comp^{}_0 u(a)
    \toto
    v(a) \comp^{}_0 \alpha_{s_0(a)} \comp^{}_1 \cdots \comp^{}_{i-1}
    \alpha_{s_{i-1}(a)}
  \]
  de $C$.
Ainsi, si $a$ est un objet de $A$, on dispose d'une $1$-cellule
  \[
    \xymatrix{
      u(a) \ar[d]_{\alpha_a} \\ v(a)
    }
  \]
  de $C$; si $a$ est une $1$-cellule de $A$, on dispose d'une $2$-cellule
  \[
    \shorthandoff{;:}
    \xymatrix{
      u(s_0(a)) \ar[d]_{\alpha_{s_0(a)}} \ar[r]^{u(a)} &
      u(t_0(a)) \ar[d]^{\alpha_{t_0(a)}} \\
      v(s_0(a)) \ar[r]_{v(a)} &
      v(t_0(a))
      \ar@{}[u];[l]_(.35){}="x"
      \ar@{}[u];[l]_(.65){}="y"
      \ar@2"x";"y"_{\alpha_a}
    }
  \]
  de $C$; si $a$ est une $2$-cellule de $A$, on dispose d'une $3$-cellule
  \[
    \shorthandoff{;:}
    \xymatrix@C=4pc@R=5pc{
      u(s_0(a))
      \ar@/^2ex/[r]^{u(s_1(a))}_{}="0"
      \ar@/_2ex/[r]_(0.60){u(t_1(a))}_{}="1"
      \ar[d]_{}="f"_{\alpha_{s_0(a)}}
      \ar@2"0";"1"_{u(a)}
      &
      u(t_0(a))
      \ar[d]^{\alpha_{t_0(a)}} \\
      v(s_0(a))
      \ar@{.>}@/^2ex/[r]^(0.40){v(s_1(a))}_{}="0"
      \ar@/_2ex/[r]_{v(t_1(a))}_{}="1"
      \ar@2{:>}"0";"1"_{v(a)}
      &
      v(t_0(a))
      \ar@{}[u];[l]_(.40){}="x"
      \ar@{}[u];[l]_(.60){}="y"
      \ar@<-1ex>@/_1.5ex/@{:>}"x";"y"_(0.60){\alpha_{s_1(a)}}_{}="0"
      \ar@<1ex>@/^1.5ex/@2"x";"y"^(0.40){\alpha_{t_1(a)}}_{}="1"
      \ar@{}"1";"0"_(.05){}="z"
      \ar@{}"1";"0"_(.95){}="t"
      \ar@3"z";"t"_{\alpha_a}
    }
  \]
  de $C$ de source $\alpha_{t_1(a)} \comp^{}_1 (\alpha_{t_0(a)} \comp^{}_0 u(a))$ et
  de but $(v(a) \comp^{}_0 \alpha_{s_0(a)}) \comp^{}_1 \alpha_{s_1(a)}$; etc.
\smallbreak

  Une telle prétransformation oplax est une \ndef{transformation oplax}
  si elle satisfait aux axiomes de fonctorialité suivants :
  \begin{enumerate}
    \item[(\emph{a})] pour tout $i \ge 0$ et toute $i$-cellule $a$ de $A$, on a
      \[ \alpha^{}_{1_{a}} = 1_{\alpha^{}_a}; \]
    \item[(\emph{b})] pour tous $i > j \ge 0$ et tout couple $a, b$ de $i$-cellules
      $j$-composables de $A$, on a
      \[
        \begin{split}
          \alpha_{a \comp^{}_j b} & =
          \left(v(t_{j+1}(a)) \comp^{}_0 \alpha_{s_0(b)} \comp^{}_1 \cdots
          \comp^{}_{j-1} \alpha_{s_{j-1}(b)} \comp^{}_j \alpha_b\right) \\
          & \phantom{=1} \qquad
          \comp^{}_{j+1} \left(\alpha_a \comp^{}_j \alpha_{t_{j-1}(a)} \comp^{}_{j-1}
          \cdots \comp^{}_1 \alpha_{t_0(a)} \comp^{}_0 u(s_{j+1}(b))\right).
        \end{split}
      \]
  \end{enumerate}

Si $\alpha$ est une transformation oplax de $u$ vers $v$ et $f:A'\to A$ un \oo-foncteur, on vérifie immédiatement qu'on définit une transformation oplax $\alpha\comp^{} f$ de $uf$ vers $vf$ en posant pour $a$ une cellule de $A'$, $(\alpha\comp^{} f)_a=\alpha_{f(a)}$. De même, pour tout \oo-foncteur $g:C\to C'$, on définit une transformation oplax $g\comp^{}\alpha$ de $gu$ vers $gv$ en posant pour $a$ une cellule de $A$, $(g\comp^{}\alpha)_a=g(\alpha_a)$. 
\end{paragr}

La proposition suivante résulte de \cite[corollaire B.5.3]{joint}. On en donne ici l'esquisse d'une preuve élémentaire.

\begin{prop}\label{prop:cotr_et_tr_opl}
Soient $u:A\to C$ un \oo-foncteur et $c$ un objet de $C$. Pour toute \oo-catégorie $X$, on a une bijection canonique naturelle de l'ensemble des \oo-foncteurs de $X$ vers $\cotr{A}{c}$ sur l'ensemble des couples $(a,\alpha)$ formés d'un \oo-foncteur $a:X\to A$ et d'une transformation oplax $\alpha:c\to ua$, où $c$ désigne aussi le \oo-foncteur constant de $X$ vers $C$ de valeur $c$.
\end{prop}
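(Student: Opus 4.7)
The plan is to construct explicit inverse bijections
\[
\Phi : \Hom_{\ooCat}(X,\cotr{A}{c}) \toto \{(a,\alpha)\},
\qquad
\Psi : \{(a,\alpha)\} \toto \Hom_{\ooCat}(X,\cotr{A}{c})
\]
and check naturality afterwards.

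For the forward map $\Phi$, start from an \oo-foncteur $F : X \to \cotr{A}{c}$. Postcomposing with the forgetful \oo-foncteur $p : \cotr{A}{c} \to A$ (which reads off the common entry $a^0_i = a^1_i$ in the last column of a table) gives an \oo-foncteur $a := p \circ F : X \to A$. For every $i$\nbd-cellule $x$ of $X$, write
\[
F(x) = \begin{pmatrix} (a(x)^0_0,\alpha(x)^0_1) & \cdots & (a(x)^0_i,\alpha(x)^0_{i+1}) \\ (a(x)^1_0,\alpha(x)^1_1) & \cdots & (a(x)^1_i,\alpha(x)^1_{i+1}) \end{pmatrix}
\]
and set $\alpha_x := \alpha(x)^0_{i+1}$. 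The compatibilities of $F$ with iterated sources and targets, combined with the globular structure of a cell of $\cotr{A}{c}$, force $\alpha(x)^\varepsilon_k = \alpha_{s^{}_{k-1}(x)}$ or $\alpha_{t^{}_{k-1}(x)}$ and $a(x)^\varepsilon_k = a(s^{}_k(x))$ or $a(t^{}_k(x))$; the source–target condition $\alpha^0_{i+1}: \alpha^1_i \to u(a^0_i) \comp_0 \alpha^0_1 \comp_1 \cdots \comp_{i-1} \alpha^0_i$ for $F(x)$ becomes exactly the domain–codomain condition required of $\alpha_x$ for an oplax transformation from the constant \oo-foncteur $c$ to $ua$. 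Finally, compatibility of $F$ with units and compositions $\comp_j$ yields axioms (\emph{a}) and (\emph{b}) of~\ref{paragr:def_tr_opl}.

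For the reverse map $\Psi$, given $(a : X \to A, \alpha : c \to ua)$, define $\Psi(a,\alpha)(x)$ for an $i$\nbd-cellule $x$ of $X$ as the table whose entries are
\[
a^0_k = a(s_k(x)), \quad a^1_k = a(t_k(x)), \quad
\alpha^0_k = \alpha_{s_{k-1}(x)}, \quad \alpha^1_k = \alpha_{t_{k-1}(x)},
\]
with $a^0_i=a^1_i=a(x)$ and $\alpha^0_{i+1}=\alpha^1_{i+1}=\alpha_x$, and with the convention $\alpha^1_0 = c$. One verifies that this table is a valid $i$\nbd-cellule of $\cotr{A}{c}$: the required identities $\alpha^\varepsilon_k : \alpha^1_{k-1} \to u(a^\varepsilon_{k-1}) \comp_0 \alpha^0_1 \comp_1 \cdots \comp_{k-2} \alpha^0_{k-1}$ are precisely the source–target conditions imposed on the components of $\alpha$ by the oplax transformation structure (using that $c$ is constant, so $c(x) = 1^i_c$ and the iterated identity absorbs into the units). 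That $\Psi(a,\alpha)$ preserves sources, targets, and units is routine. The delicate point is preservation of $\comp_j$, which compares the explicit formula for $(a,\alpha)\comp_j(b,\beta)$ recalled in Section~\ref{section:prelim_cotr} with axiom (\emph{b}) of~\ref{paragr:def_tr_opl}: the $\gamma^\varepsilon_k$ appearing in the composition table correspond term by term to the expression for $\alpha_{x \comp_j y}$, once one rewrites the interchange law and uses $s_k,t_k$ to identify the appearing subcells.

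Mutual inversion is then essentially tautological. The composition $\Phi \circ \Psi$ recovers $(a,\alpha)$ by construction. For $\Psi \circ \Phi$, one uses the redundancy remark of~\ref{paragr:def_tr_opl}: a cell of $\cotr{A}{c}$ is determined by its entries $\alpha^0_1,\dots,\alpha^0_{i+1}$ and $a^0_i$; the remaining entries of $F(x)$ are reconstructed from iterated sources and targets, which is exactly what $\Psi$ does. Naturality in $X$ is immediate as both constructions commute with precomposition by \oo-foncteurs $X' \to X$. The main obstacle in the plan is the comparison of the two composition formulas, for which all the complexity is concentrated in the matching of the $\gamma^\varepsilon_k$ with the two-row expression in axiom (\emph{b}); this is a combinatorial verification that can be carried out by decomposing the ``large'' composite according to the interchange law and using globularity.
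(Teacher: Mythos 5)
Your proposal is correct and follows essentially the same route as the paper: the paper's proof likewise rests on observing that, for an oplax transformation out of the constant \oo-foncteur $c$, the source of $\alpha_x$ collapses to $\alpha_{t_{i-1}(x)}$ and axiom (\emph{b}) simplifies accordingly, and then defines the same table-valued assignment $(a,\alpha)\mapsto F$. The only difference is presentational: you spell out the inverse map $\Phi$ explicitly, whereas the paper asserts bijectivity directly by appealing to the redundancy of the tables.
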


\begin{proof}
Soit $a:X\to A$ un \oo-foncteur. Une prétransformation de $c$ vers $ua$ consiste en la donnée, pour tout $i\geq0$ et toute $i$\nbd-cellule $x$ de $X$, d'une $(i+1)$\nbd-cellule 
\[
\alpha_x :
\alpha_{t_{i-1}(x)} \comp^{}_{i-1} \cdots \comp^{}_1 \alpha_{t_0(x)} \comp^{}_0 c(x)
\toto
ua(x) \comp^{}_0 \alpha_{s_0(x)} \comp^{}_1 \cdots \comp^{}_{i-1}
\alpha_{s_{i-1}(x)}
\]
de $C$. Si $i=0$, la source de la $1$\nbd-cellule $\alpha_x$ est $c$ et son but $ua(x)$. 
Pour $i>0$, comme $c$ est le foncteur constant de valeur $c$, la source de la $(i+1)$\nbd-cellule $\alpha_x$ est la $i$\nbd-cellule 
$
\alpha_{t_{i-1}(x)} \comp^{}_{i-1}\alpha_{t_{i-2}(x)}\comp^{}_{i-2} \cdots \comp^{}_1 \alpha_{t_0(x)} \comp^{}_0 1^i_c\,
$
de $C$, où $1^i_c$ désigne la $i$\nbd-cellule unité itérée de l'objet $c$. Or, la $i$\nbd-cellule $\alpha_{t_{i-2}(x)}\comp^{}_{i-2} \cdots \comp^{}_1 \alpha_{t_0(x)} \comp^{}_0 1^i_c$ est l'unité d'une $(i-1)$\nbd-cellule, et par suite, la source de $\alpha_x$ est égale à $\alpha_{t_{i-1}(x)}$. Ainsi, $\alpha_x$ est une $(i+1)$\nbd-cellule
\[
\alpha_x :
\alpha_{t_{i-1}(x)} \toto ua(x) \comp^{}_0 \alpha_{s_0(x)} \comp^{}_1 \cdots \comp^{}_{i-1}\alpha_{s_{i-1}(x)}\,.
\]

Par définition, pour que la prétransformation oplax $\alpha$ soit une transformation oplax, il faut et il suffit, d'une part, que pour tout $i\geq0$ et toute $i$\nbd-cellule $x$ de $X$, on ait $\alpha_{1_x}=1_{\alpha_x}$, et d'autre part, que pour tous $i>j\geq0$ et tout couple $x, y$ de $i$-cellules
$j$-composables de $X$, on ait
\[
\begin{split}
\alpha_{x \comp^{}_j y} & = \left(ua(t_{j+1}(x)) \comp^{}_0 \alpha_{s_0(y)} \comp^{}_1 \cdots\comp^{}_{j-1} \alpha_{s_{j-1}(y)} \comp^{}_j \alpha_y\right) \\
& \phantom{=1} \qquad \comp^{}_{j+1} \left(\alpha_x \comp^{}_j \alpha_{t_{j-1}(x)} \comp^{}_{j-1}\cdots \comp^{}_1 \alpha_{t_0(x)} \comp^{}_0 1^{j+1}_c\right).
\end{split}
\]
Or, la $(j+1)$\nbd-cellule $\alpha_{t_{j-1}(x)} \comp^{}_{j-1}\cdots \comp^{}_1 \alpha_{t_0(x)} \comp^{}_0 1^{j+1}_c$ est l'unité d'une $j$\nbd-cellule de $C$, et par suite, cette deuxième condition est équivalente à
\[
\alpha_{x \comp^{}_j y} = ua(t_{j+1}(x)) \comp^{}_0 \alpha_{s_0(y)} \comp^{}_1 \cdots\comp^{}_{j-1} \alpha_{s_{j-1}(y)} \comp^{}_j \alpha_y \comp^{}_{j+1} \alpha_x\,.
\]

Il résulte alors facilement de ces considérations qu'en associant, pour $i\geq0$, à toute $i$\nbd-cellule $x$ de $X$, le tableau
\[
\begin{pmatrix}
\left(a(s_0(x)),\,\alpha_{s_0(x)}\right)&\cdots&\left(a(s_{i-1}(x)),\,\alpha_{s_{i-1}(x)}\right)&\bigl(a(x),\,\alpha_x\bigr)\cr
\noalign{\vskip 5pt}
\left(a(t_0(x)),\,\alpha_{t_0(x)}\right)&\cdots&\left(a(t_{i-1}(x)),\,\alpha_{t_{i-1}(x)}\right)&\bigl(a(x),\,\alpha_x\bigr)
\end{pmatrix},
\]
on définit un \oo-foncteur $X\to\cotr{A}{c}$, et on établit ainsi une bijection de l'ensemble des couples $(a,\alpha)$, formés d'un \oo-foncteur $a:X\to A$ et d'une transformation oplax $\alpha:c\to ua$, sur l'ensemble des \oo-foncteurs de $X$ vers $\cotr{A}{c}$. La naturalité en $X$ de cette bijection est évidente.
\end{proof}

\begin{prop}\label{prop:tr_opl_hmtp}
Soient $u,v:A\to C$ deux \oo-foncteurs de même source et même but. On a une bijection canonique naturelle entre l'ensemble des transformations oplax de $u$ vers $v$ et l'ensemble des \oo-foncteurs $h:\Dsk\otimes A\to C$ rendant commutatif le diagramme suivant
\[
\xymatrixrowsep{1.4pc}
\xymatrixcolsep{.4pc}
\xymatrix{
\{0\}\otimes A\ar@{}[r]|-{\textstyle\simeq}\ar@{^{(}->}@<-1ex>[rrd]
&A\ar@/^2ex/[rrrrrd]^u
\\
&&\Dsk\otimes A\ar[rrrr]^h
&&&&C
\\
\{1\}\otimes A\ar@{}[r]|-{\textstyle\simeq}\ar@{^{(}->}[rru]
&A\ar@/_2ex/[rrrrru]_v
&&&&&&.}
\]
\end{prop}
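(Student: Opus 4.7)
\emph{Strategy.} The plan is to use Steiner's machinery (Theorems~\ref{thm:produit_Gray} and~\ref{thm:Steiner_pol}) to identify both sides of the bijection with the same combinatorial data attached to a polygraphic basis, first for a polygraph-free $A$ and then via colimits.

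\emph{Free case.} Recall that $\Dsk = \nu(\cn\Deltan{1})$, where $\cn\Deltan{1}$ is the strong Steiner complex of paragraph~\ref{paragr:def_cX} with basis $\{(0),(1),(0,1)\}$. For $K$ a strong Steiner complex, Proposition~\ref{prop:produit_Stf} combined with Theorem~\ref{thm:produit_Gray}(b) yields $\Dsk\otimes\nu(K)\cong\nu(\cn\Deltan{1}\otimes K)$, with $\cn\Deltan{1}\otimes K$ again a strong Steiner complex of basis $\{(\varepsilon)\otimes b : \varepsilon\in\{0,1\},\,b\in B^K\}\cup\{(0,1)\otimes b : b\in B^K\}$. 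By Theorem~\ref{thm:Steiner_pol}, the $\infty$-category $\Dsk\otimes\nu(K)$ is polygraph-free on the corresponding atoms (in the sense of~\ref{paragr:def_pol}), the atoms $\atom{(\varepsilon)\otimes b}$ exhausting the sub-$\infty$-categories $\{\varepsilon\}\otimes\nu(K)\cong\nu(K)$. Thus specifying an $\infty$-functor $h:\Dsk\otimes\nu(K)\to C$ extending $u$ and $v$ amounts to specifying, for each $b\in B^K$ of degree $i$, an $(i+1)$-cell $\alpha_b := h(\atom{(0,1)\otimes b})$ of $C$ whose source and target are the images under $h$ of $\atom{(0,1)\otimes b}^0_i$ and $\atom{(0,1)\otimes b}^1_i$.

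A direct computation with the Leibniz rule
\[
d\bigl((0,1)\otimes b\bigr) = (1)\otimes b - (0)\otimes b - (0,1)\otimes d(b)
\]
combined with the descending tableau recursion of paragraph~\ref{paragr:def_St} identifies these source and target respectively with
\[
\alpha_{t_{i-1}(\atom{b})}\comp^{}_{i-1}\cdots\comp^{}_1\alpha_{t_0(\atom{b})}\comp^{}_0 u(\atom{b})
\quad\text{and}\quad
v(\atom{b})\comp^{}_0\alpha_{s_0(\atom{b})}\comp^{}_1\cdots\comp^{}_{i-1}\alpha_{s_{i-1}(\atom{b})},
\]
which are precisely the boundaries required of the oplax-transformation datum $\alpha_b$ (paragraph~\ref{paragr:def_tr_opl}). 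Conversely, an oplax transformation on the polygraph-free $\nu(K)$ is determined by and uniquely recoverable from its values on the atoms via iterated application of axioms~\ref{paragr:def_tr_opl}(a)--(b), and these axioms translate exactly into the identities in $\nu(\cn\Deltan{1}\otimes K)$ expressing $\atom{(0,1)\otimes 1_{\atom{b}}}$ as an identity and $\atom{(0,1)\otimes (\atom{b}\comp^{}_j\atom{b'})}$ as the appropriate $(j+1)$-composite of cells built from $\atom{(0,1)\otimes\atom{b}}$, $\atom{(0,1)\otimes\atom{b'}}$, $u(\atom{b})$ and $v(\atom{b'})$. This settles the bijection for $A=\nu(K)$.

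\emph{General case and main obstacle.} For arbitrary $A$, write $A$ as the canonical colimit of its simplices with values in the orientals $\On{n}=\nu(\cn\Deltan{n})$, each of which is of the form $\nu(K)$ for a strong Steiner complex. By Theorem~\ref{thm:produit_Gray}(a), $\Dsk\otimes-$ preserves colimits, so $\infty$-functors $\Dsk\otimes A\to C$ extending $u$ and $v$ form the limit of the corresponding sets over this diagram; oplax transformations $u\to v$ are cellular data subject to cellular equations and satisfy the analogous limit formula, so the bijection of the free case---natural in $A$ by construction---passes to the limit. The main technical obstacle lies in the boundary identification: one must carry out the descending recursion for $\atom{(0,1)\otimes b}$ together with the Leibniz rule while bookkeeping the iterated sources and targets and the asymmetry between the two sides of the cylinder (the $u$'s appearing on the ``source'' side, the $v$'s on the ``target'' side), so as to recover the precise shape of the composites appearing in~\ref{paragr:def_tr_opl}. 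Once this is done, the compatibility of $h$ with identities and $\comp^{}_j$-composition in $\nu(\cn\Deltan{1}\otimes K)$ forces, and is forced by, the fonctoriality axioms of oplax transformations.
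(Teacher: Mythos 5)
The paper does not prove this proposition internally: it simply cites \cite[corollaire B.2.6]{joint}, so there is no in-text argument to compare yours with. Your treatment of the ``free'' case is in the right spirit: identifying $\Dsk\otimes\nu(K)$ with $\nu(\cn \Deltan{1}\otimes K)$ for $K$ a strong Steiner complex via Theorem~\ref{thm:produit_Gray} and using the polygraphic generation by atoms (Theorem~\ref{thm:Steiner_pol}) is exactly the reduction this machinery is designed for, and the Leibniz-rule computation of the boundary of $\atom{(0,1)\otimes b}$ is indeed the computational heart (compare the formulas for $\atom{(0,1)\otimes(i_0,\dots,i_p)}^\varepsilon_r$ established inside the proof of Proposition~\ref{prop:iso_comp_cotr}). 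But you defer precisely the two points carrying the difficulty: the actual identification of the images of $\atom{(0,1)\otimes b}^0_i$ and $\atom{(0,1)\otimes b}^1_i$ with the composites of~\ref{paragr:def_tr_opl}, and, more seriously, the ``conversely'' direction, where a family $(\alpha_b)_{b\in B^K}$ with the right boundaries must be shown to extend \emph{consistently} to all cells of $\nu(K)$ via axioms (\emph{a})--(\emph{b}). The polygraph universal property gives existence and uniqueness for the \oo-functor $h$ out of $\Dsk\otimes\nu(K)$, not for the oplax datum on $\nu(K)$; the equivalence of the two on non-atomic cells is essentially the content of the statement, not a routine translation.

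The genuine gap is in your passage to the general case. You write $A$ as ``the canonical colimit of its simplices with values in the orientals'': that colimit is the \oo-categorical realization of the Street nerve of $A$, and the counit from it to $A$ is \emph{not} an isomorphism in general --- as recalled in the introduction, the realization of any simplicial set is free in the sense of polygraphs, whereas most \oo-categories (already most $1$\nbd-categories) are not. Equivalently, the orientals do not form a dense subcategory of $\ooCat$, so your limit argument computes the two sets for the realization of $\Nrf A$ rather than for $A$. To repair this you would need to replace the orientals by a genuinely dense family of \oo-categories of the form $\nu(K)$ (the Steiner categories are dense, as mentioned in the introduction), and even then you would still have to justify that oplax transformations $u\to v$ satisfy the corresponding descent property --- not a formal point, since an oplax transformation assigns data to \emph{every} cell of $A$ subject to equations, and this descent is essentially equivalent to the proposition itself. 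The cited reference \cite[corollaire B.2.6]{joint} avoids density altogether by describing $\Dsk\otimes A$ directly by generators and relations for arbitrary $A$.
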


\begin{proof}
Voir \cite[corollaire B.2.6]{joint}.
\end{proof}

On notera souvent par la même lettre une transformation oplax et le \oo-foncteur correspondant par la proposition précédente. Avec cet abus de notation, si $\alpha$ est une transformation oplax entre deux \oo-foncteurs de source $A$ et de but $C$, et si $a:A'\to A$ et $c:C\to C'$ sont des \oo-foncteurs, on a les égalités $\alpha\comp a=\alpha(1_{\Dsk}\otimes a)$ et $c\comp\alpha=c\alpha$, où dans les membres de gauche $\alpha$ est vue comme une transformation oplax et dans les membres de droite comme un \oo-foncteur.

\begin{paragr}\label{paragr:def_comp_vert}
Soient $u,v,w:A\to C$ trois \oo-foncteurs de même source et même but, $\alpha$~une transformation oplax de $u$ vers $v$ et $\beta$ une transformation oplax de $v$ vers $w$. On définit une transformation oplax $\beta\alpha$ de $u$ vers $w$, \ndef{composé vertical} de $\beta$ avec $\alpha$, comme suit. On forme dans $\ooCat$ la somme amalgamée $\Dsk\amalg_{\Deltan{0}}\Dsk$, où $\Deltan{0}$ s'envoie dans le terme de gauche (resp. de droite) de la somme amalgamée par $0\mapsto0$ (resp. par $0\mapsto1$). On définit $\delta:\Dsk\to\Dsk\amalg_{\Deltan{0}}\Dsk$ en envoyant $0$ (resp. $1$) sur l'objet $0$ du terme de droite (resp.~sur l'objet $1$ du terme de gauche) et on pose $\beta\alpha=(\beta,\alpha)(\delta\otimes1_A)$
\[
\xymatrix{
\Dsk\otimes A\ar[r]^-{\delta\otimes1_A}
&(\Dsk\amalg_{\Deltan{0}}\Dsk)\otimes A\simeq(\Dsk\otimes A)\amalg_A(\Dsk\otimes A)\ar[r]^-{(\beta,\alpha)}
&C\ .
}
\]
On remarquera qu'en vertu du théorème~\ref{thm:Steiner}, de la proposition~\ref{prop:amalg_Stf} et des paragraphes~\ref{paragr:def_cX} et~\ref{paragr:def_nrf_Str}, \hbox{$\delta:\Dsk\to\Dsk\amalg_{\Deltan{0}}\Dsk$} s'identifie à l'image par le foncteur $\nu$ du morphisme de complexes dirigés augmentés $\lambda(\delta):\cn \Deltan{1}\to \cn \Deltan{1}\amalg_{\cn \Deltan{0}}\cn \Deltan{1}$.
\smallbreak

On se gardera de croire que cette composition verticale et les compositions d'une transformation oplax à gauche ou à droite par un \oo-foncteur (voir le paragraphe~\ref{paragr:def_tr_opl}) satisfont à la \og règle de Godement\fg{}. Autrement dit, si
\[
\xymatrixcolsep{.9pc}
\xymatrix{
A\ar@/_1pc/[rrr]_g\ar@/^1pc/[rrr]^f
&\rtwocell<>{\kern3pt\alpha}
&&B\ar@/_1pc/[rrr]_k\ar@/^1pc/[rrr]^h
&\rtwocell<>{\kern3pt\beta}
&&C
}
\]
est un diagramme de \oo-catégories, \oo-foncteurs et transformations oplax, on \emph{n'a pas} en général l'égalité $(k\comp\alpha)(\beta\comp f)=(\beta\comp g)(h\comp\alpha)$. En particulier, le composé horizontal $\beta\comp\alpha$ \emph{n'est pas} défini. Ainsi, les \oo-catégories, \oo-foncteurs et transformations oplax \emph{ne forment pas} une $2$\nbd-catégorie, mais seulement ce qu'on appelle une sesquicatégorie.
\end{paragr}

\begin{prop}\label{prop:iso_comp_cotr}
Soient $u:A\to C$ un \oo-foncteur et $c$ un objet de $C$. On a un isomorphisme canonique d'ensembles simpliciaux $\Nrf(\cotr{A}{c})\simeq\cotr{\Nrf(A)}{c}$, où $c$ désigne aussi le $0$\nbd-simplexe de $\Nrf(C)$ correspondant au \oo-foncteur, de source la \oo-catégorie ponctuelle $\On{0}$ et de but $C$, défini par l'objet $c$ de $C$.
\end{prop}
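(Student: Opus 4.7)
L'idée est d'identifier les $n$-simplexes des deux ensembles simpliciaux via leurs propriétés universelles et de construire une bijection naturelle en spécifiant les $\infty$-foncteurs sur les atomes de $\On{1+n}$.

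Par définition du nerf de Street et la Proposition~\ref{prop:cotr_et_tr_opl}, un $n$-simplexe de $\Nrf(\cotr{A}{c})$ s'identifie à un couple $(a, \alpha)$, où $a: \On{n} \to A$ est un $\infty$-foncteur et $\alpha: c \to ua$ une transformation oplax, $c: \On{n} \to C$ désignant le $\infty$-foncteur constant de valeur $c$. D'autre part, par déroulement des définitions, un $n$-simplexe de $\cotr{\Nrf(A)}{c}$ est un couple $(y', x)$ formé d'un $\infty$-foncteur $x: \On{n} \to A$ et d'un $\infty$-foncteur $y': \On{1+n} \to C$ tels que $y' \circ \On{i_{0,n}}$ soit le $\infty$-foncteur constant de valeur $c$ et $y' \circ \On{j_{0,n}} = u \circ x$.

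Pour construire la bijection, on utilise le Théorème~\ref{thm:Steiner_pol}, qui affirme que $\On{1+n} = \nu(\cn\Deltan{1+n})$ est engendrée librement comme polygraphe par ses atomes $\atom{I}$, indexés par les parties non vides $I \subseteq \{0, 1, \dots, 1+n\}$, chaque $\atom{I}$ étant de dimension $|I|-1$. Étant donné $(a,\alpha)$, on pose $x=a$ et on définit $y'$ sur les atomes par $y'(\atom{I}) = u(a(\atom{I-1}))$ si $0 \notin I$ (avec $I-1 = \{k-1 \mid k \in I\}$), $y'(\atom{\{0\}}) = c$, et $y'(\atom{I}) = \alpha^{}_{\atom{(I \setminus \{0\})-1}}$ si $0 \in I$ et $|I|\geq 2$. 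Les axiomes (a) et (b) du paragraphe~\ref{paragr:def_tr_opl} pour $\alpha$, joints à la fonctorialité de $a$ et $u$, fournissent précisément les compatibilités sources/buts requises par la description inductive des $\atom{I}^\varepsilon_k$ rappelée au paragraphe~\ref{paragr:def_St}, de sorte que la propriété polygraphique garantit l'existence et l'unicité d'un tel $\infty$-foncteur $y'$, satisfaisant par construction les conditions de restriction demandées.

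Réciproquement, on associe à $(y', x)$ le couple $(x, \alpha)$, où la transformation oplax $\alpha$ est définie sur les atomes de $\On{n}$ par $\alpha^{}_{\atom{I'}} = y'(\atom{\{0\} \cup (I'+1)})$, puis étendue à toutes les cellules par les formules de fonctorialité~(b). La naturalité en $\Deltan{n}$ découle de ce que les opérateurs simpliciaux agissent des deux côtés via les $\infty$-foncteurs $\On{\varphi}$, lesquels respectent la partition des atomes suivant leur intersection avec~$\{0\}$. La difficulté principale réside dans la vérification combinatoire, à partir de la description explicite par tableaux donnée au paragraphe~\ref{paragr:def_St}, que les axiomes oplax de~$\alpha$ correspondent exactement, atome par atome, aux relations $\infty$-catégoriques dans $\On{1+n}$ nécessaires pour assembler les $y'(\atom{I})$ en un $\infty$-foncteur bien défini.
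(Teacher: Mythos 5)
Votre stratégie générale est bien celle de la preuve du texte : identifier les $n$\nbd-simplexes de $\Nrf(\cotr{A}{c})$ aux couples $(a,\alpha)$ via la proposition~\ref{prop:cotr_et_tr_opl}, puis utiliser l'engendrement polygraphique de $\On{1+n}$ par ses atomes (th�or�me~\ref{thm:Steiner_pol}) pour construire le \oo-foncteur $c':\On{1+n}\to C$ atome par atome, avec exactement la m�me r�partition des atomes selon que $0$ appartient ou non � la partie $I$. Mais il y a un trou r�el au c\oe{}ur de l'argument : vous affirmez que les axiomes (\emph{a}) et (\emph{b}) des transformations oplax � fournissent pr�cis�ment les compatibilit�s sources/buts requises �, puis vous reconnaissez vous-m�me que cette v�rification combinatoire est � la difficult� principale � sans l'effectuer. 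Or elle n'a rien d'imm�diat : la source de $\alpha_a$ est un compos� du type $\alpha_{t_{i-1}(a)}\comp_{i-1}\cdots\comp_1\alpha_{t_0(a)}\comp_0 u(a)$, tandis que la source de l'atome $\atom{(0,i_0,\dots,i_p)}$ est donn�e par la r�currence sur les $d(\cdot)_\pm$ du paragraphe~\ref{paragr:def_St}; relier les deux directement revient essentiellement � red�montrer la correspondance entre transformations oplax et produit tensoriel par $\Dsk$, ce qui est tout l'enjeu. Le m�me probl�me se pose pour votre r�ciproque : d�finir $\alpha$ sur les seuls atomes de $\On{n}$ puis � l'�tendre par (\emph{b}) � exige de v�rifier que l'extension est ind�pendante des d�compositions en atomes, ce que vous ne justifiez pas.

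C'est pr�cis�ment pour contourner cette difficult� que la preuve du texte introduit le morphisme de complexes dirig�s augment�s $\pi_n:\cn\Deltan{1}\otimes\cn\Deltan{n}\to\cn\Deltan{1+n}$ : la compatibilit� aux diff�rentielles s'y v�rifie par un calcul �l�mentaire de deux lignes, les formules $(1')$--$(3')$ montrent que $\nu(\pi_n)$ envoie atomes sur atomes (ou unit�s it�r�es), et la compatibilit� aux sources et buts de l'assignation $y\mapsto f(y)=\alpha(x)$ devient alors une cons�quence formelle de la fonctorialit� de $\alpha$ (vue comme \oo-foncteur $\Dsk\otimes\On{n}\to C$) et de $\nu(\pi_n)$ : $s(f(y))=\alpha(s(x))=c'_q(s(y))$. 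Sans un dispositif de ce genre (ou un calcul explicite complet que vous ne faites pas), votre esquisse reporte l'int�gralit� du contenu de la proposition sur une v�rification annonc�e mais absente.
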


\begin{proof}
Pour commencer, on va définir, pour tout $n\geq0$, une application \hbox{$\theta_n:(\cotr{\Nrf(A)}{c})_n\to(\Nrf(\cotr{A}{c}))_n$} comme suit. Soit $(c',a)$
\[
c':\On{1+n}\to C\,,\quad a:\On{n}\to A\,,\qquad c'_0=c\,,\quad c'_{1,\dots,n}=ua\,,
\]
un $n$\nbd-simplexe de $\cotr{\Nrf(A)}{c}$. On va définir un morphisme de complexes dirigés augmentés
\[
\pi_n:\cn \Deltan{1}\otimes \cn \Deltan{n}\to \cn \Deltan{1+n}\,,
\]
d'où en vertu du théorème~\ref{thm:produit_Gray}, de la proposition~\ref{prop:produit_Stf}, et des paragraphes~\ref{paragr:def_cX} et~\ref{paragr:def_nrf_Str}, un \oo-foncteur
\[
\nu(\pi_n):\Dsk\otimes\On{n}\to\On{1+n}\,.
\]
On montrera que le composé $c'\nu(\pi_n):\Dsk\otimes\On{n}\to C$ définit une transformation oplax du \oo-foncteur constant $c:\On{n}\to C$ vers le \oo-foncteur $ua$, de sorte que le couple $(a,c'\nu(\pi_n))$ correspondra, en vertu de la proposition~\ref{prop:cotr_et_tr_opl}, à un \oo-foncteur $\On{n}\to\cotr{A}{c}$, autrement dit, à un $n$\nbd-simplexe de $\Nrf(\cotr{A}{c})$. Par définition, ce $n$-simplexe sera l'image de $(c',a)$ par $\theta_n$. Pour conclure, il restera à prouver que l'application $\theta_n$ est bijective, et que la famille des $\theta_n$, pour $n\geq0$, est un morphisme d'ensembles simpliciaux.
\smallbreak

Définissons le morphisme de complexes dirigés augmentés $\pi_n$. Le complexe dirigé augmenté $\cn \Deltan{1+n}$ admet comme base l'ensemble gradué $E$,
\[
E_p = \{(i_0, \dots, i_p) \mid 0 \le i_0 < \cdots < i_p \le 1+n\}\,,\quad p\geq0\,,
\]
et $\cn \Deltan{n}$, identifié à un sous-complexe dirigé augmenté de $\cn \Deltan{1+n}$ par le morphisme $\cn (j_{0,n})$ (voir les notations~\ref{notation:simili_joint}), admet comme base le sous-ensemble gradué $E'$,
\[
E'_p = \{(i_0, \dots, i_p) \mid  1\leq i_0<\cdots<i_p\leq 1+n \}\,,\quad p\geq0\,,
\]
de $E$. La base de $\cn \Deltan{1}\otimes \cn \Deltan{n}$ est formée des éléments
\[
(0)\otimes(i_0, \dots, i_p)\,,\qquad (1)\otimes(i_0, \dots, i_p)\,,\qquad (0,1)\otimes(i_0, \dots, i_p)\,,
\]
pour $1\leq i_0<\cdots<i_p\leq 1+n$, $p\geq0$, les deux premiers étant des $p$\nbd-chaînes et le troisième une $(p+1)$\nbd-chaîne.
On définit $\pi_n$ par les formules
\[
\begin{matrix}
(1)\qquad\quad&\pi_n((0)\otimes(i_0, \dots, i_p))=\left\{\begin{matrix}
(0)&\quad\hbox{si}\ \ p=0\,,\cr
\noalign{\vskip3pt}
0&\quad\hbox{si}\ \ p>0\,,
\end{matrix}\right.\cr
(2)\qquad\quad&\pi_n((1)\otimes(i_0, \dots, i_p))=(i_0, \dots, i_p)\,,\hfill\cr
\noalign{\vskip6pt}
(3)\qquad\quad&\pi_n((0,1)\otimes(i_0, \dots, i_p))=(0,i_0, \dots, i_p)\,,\hfill&\qquad\quad
\end{matrix}
\]
pour $1\leq i_0<\cdots<i_p\leq 1+n$. \'Etant donné des entiers $0\leq i_0\leq\cdots\leq i_p\leq n$, on observe que s'il existe $l$ tel que $0\leq l<p$ et tel que  $i_l=i_{l+1}$, les formules définissant $\pi_n$ sont compatibles avec la convention $(i_0,\dots,i_p)=0$ (voir le paragraphe~\ref{paragr:def_cX}). La compatibilité de $\pi_n$ aux augmentations et aux sous-monoïdes de positivité est évidente, ainsi que celle aux différentielles dans les cas $(1)$ et $(2)$. Pour montrer que $\pi_n$ est un morphisme de complexes dirigés augmentés, il reste donc à vérifier que
\[
\pi_nd((0,1)\otimes(i_0, \dots, i_p))=d\pi_n((0,1)\otimes(i_0, \dots, i_p))\,.
\]
Pour $p=0$, on a 
\[
\pi_nd((0,1)\otimes(i_0))=\pi_n((1)\otimes(i_0)-(0)\otimes(i_0))=(i_0)-(0)=d(0,i_0)=d\pi_n((0,1)\otimes(i_0))\,,
\]
et pour $p\geq1$, on a
\[\begin{aligned}
\pi_nd((0,1)\otimes(i_0, \dots, i_p))&=\pi_n\Bigl((1)\otimes(i_0, \dots, i_p)-(0)\otimes(i_0, \dots, i_p)\cr
\noalign{\vskip-5pt}
&\phantom{=\pi_n\Bigl(}\kern3pt-\textstyle\sum\limits_{k=0}^p(-1)^k(0,1)\otimes(i_0,\dots,\hat i_k,\dots,i_p)\Bigr)\cr
&=(i_0, \dots, i_p)-\textstyle\sum\limits_{k=0}^p(-1)^k(0,i_0,\dots,\hat i_k,\dots,i_p)\cr
\noalign{\vskip3pt}
&=d(0,i_0, \dots, i_p)=d\pi_n((0,1)\otimes(i_0, \dots, i_p))\,.
\end{aligned}\]
Le fait que le composé $c'\nu(\pi_n):\Dsk\otimes\On{n}\to C$ définit une transformation oplax du \oo-foncteur constant $c:\On{n}\to C$ vers le \oo-foncteur $ua$ résulte aussitôt des formules $(1)$ et $(2)$ ci-dessus et des égalités $c'_0=c$ et $c'_{1,\dots,1+n}=ua$.
\smallbreak

D'autre part, pour tout morphisme $\psi:\Deltan{n'}\to\Deltan{n}$, il est immédiat, dans les notations de~\ref{notation:simili_joint} (et en tenant compte de l'observation qui suit la définition de $\pi_n$), que le carré
\[
\xymatrixcolsep{3pc}
\xymatrix{
\cn \Deltan{1}\otimes \cn \Deltan{n'}\ar[d]_{1_{\cn \Deltan{1}}\otimes\psi}\ar[r]^-{\pi_{n'}}
&\cn \Deltan{1+n'}\ar[d]^{\cn (1_{\Deltan{0}}\amalg\psi)}
\\
\cn \Deltan{1}\otimes \cn \Deltan{n}\ar[r]_-{\pi_{n}}
&\cn \Deltan{1+n}
}
\]
est commutatif, ce qui implique facilement que la famille formée des $\theta_n$, $n\geq0$, est un morphisme d'ensembles simpliciaux.
\smallbreak

Il reste à montrer que les applications $\theta_n$ sont bijectives. Cela revient à montrer que pour tout $n\geq0$ et tout couple $(a,\alpha)$, formé d'un \oo-foncteur $a:\On{n}\to A$ et d'une transformation oplax $\alpha:\Dsk\otimes\On{n}\to C$ du \oo-foncteur constant $c:\On{n}\to C$ vers $ua$, il existe un unique \oo-foncteur $c':\On{1+n}\to C$ rendant commutatif le triangle
\[
\raise 20pt
\vbox{
\xymatrix{
\Dsk\otimes\On{n}\ar[r]^-{\alpha}\ar[d]_{\nu(\pi_n)}
&C
\\
\On{1+n}\ar@{-->}[ru]_{c'}
}}\leqno(*)
\]
(les égalités $c'_0=c$ et $c'_{1,\dots,1+n}=ua$ étant alors automatiques). On va le démontrer en utilisant le théorème~\ref{thm:Steiner_pol}, qui implique que la \oo-catégorie $\On{1+n}$ est engendrée librement au sens des polygraphes par ses atomes 
\[
\atom{(j_0,\dots,j_q)}\,,\qquad0\leq j_0<\cdots<j_q\leq1+n\,,\quad q\geq0.
\]

Pour commencer, on observe qu'on a les égalités
\[
\begin{matrix}
(1')\qquad\quad&\nu(\pi_n)\atom{(0)\otimes(i_0, \dots, i_p)}=\left\{\begin{matrix}
\atom{(0)}&\quad\hbox{si}\ \ p=0\,,\cr
\noalign{\vskip3pt}
1^p_{\atom{(0)}}&\quad\hbox{si}\ \ p>0\,,
\end{matrix}\right.\cr
\noalign{\vskip3pt}
(2')\qquad\quad&\nu(\pi_n)\atom{(1)\otimes(i_0, \dots, i_p)}=\atom{(i_0, \dots, i_p)}\,,\hfill\cr
\noalign{\vskip6pt}
(3')\qquad\quad&\nu(\pi_n)\atom{(0,1)\otimes(i_0, \dots, i_p)}=\atom{(0,i_0, \dots, i_p)}\,,\hfill&\qquad\quad
\end{matrix}
\]
pour $p\geq0$ et $1\leq i_0<\cdots<i_p\leq 1+n$. Les deux premières sont immédiates. Pour prouver la troisième, on remarque qu'une récurrence descendante montre immédiatement que, pour $0\leq r\leq p$, on a dans $\cn \Deltan{1}\otimes \cn \Deltan{n}$
\[
\begin{aligned}
&\atom{(0,1)\otimes(i_0, \dots, i_p)}^0_r=(0)\otimes\atom{(i_0, \dots, i_p)}^0_r+(0,1)\otimes\atom{(i_0, \dots, i_p)}^1_{r-1}\,,\cr
\noalign{\vskip3pt}
&\atom{(0,1)\otimes(i_0, \dots, i_p)}^1_r=(1)\otimes\atom{(i_0, \dots, i_p)}^1_r+(0,1)\otimes\atom{(i_0, \dots, i_p)}^0_{r-1}\,
\end{aligned}
\]
(avec, pour $r=0$, la convention $\atom{(i_0, \dots, i_p)}^0_{-1}=\atom{(i_0, \dots, i_p)}^1_{-1}=0$). De même, une autre récurrence descendante montre que dans $\cn \Deltan{1+n}$, on a, pour $1\leq r\leq p$, 
\[
\begin{aligned}
&\atom{(0,i_0, \dots, i_p)}^0_r=(0,\atom{(i_0, \dots, i_p)}^1_{r-1})\,,\cr
\noalign{\vskip3pt}
&\atom{(0,i_0, \dots, i_p)}^1_r=\atom{(i_0, \dots, i_p)}^1_r+(0,\atom{(i_0, \dots, i_p)}^0_{r-1})\,,
\end{aligned}
\]
où pour $x$ une $r$-chaîne de $\cn \Deltan{1+n}$ de la forme 
\[
x=\textstyle\sum\limits_{1\leq k_0<\cdots<k_r\leq1+n}x_{k_0,\dots,k_r}(k_0,\dots,k_r)\,,
\]
on a posé
\[
(0,x)=\textstyle\sum\limits_{1\leq k_0<\cdots<k_r\leq1+n}x_{k_0,\dots,k_r}(0,k_0,\dots,k_r)\,,
\]
et pour $r=0$, $\atom{(0,i_0, \dots, i_p)}^0_0=(0)$ et $\atom{(0,i_0, \dots, i_p)}^1_0=\atom{(i_0, \dots, i_p)}^1_0$.
L'égalité~$(3')$ résulte alors des formules définissant le morphisme $\pi_n$, en tenant compte, pour $r=0$, du fait que la base de $\cn \Deltan{1+n}$ est unitaire.
\smallbreak

On remarque que les égalités $(1')$, $(2')$, $(3')$ impliquent que tout atome
\[
\atom{(j_0,\dots,j_q)}\,,\qquad0\leq j_0<\cdots<j_q\leq1+n\,,\quad q\geq0\,,
\]
de $\On{1+n}$ est l'image par $\nu(\pi_n)$ d'un atome de $\Dsk\otimes\On{n}$. Plus précisément, on a
\[
\begin{matrix}
\atom{(0)}=\nu(\pi_n)\atom{(0)\otimes(1)}\,,\hfill&\hbox{si $j_0=0$ et $q=0$,}\cr
\noalign{\vskip3pt}
\atom{(j_0,\dots,j_q)}=\nu(\pi_n)\atom{(0,1)\otimes(j_1, \dots, j_q)}\,,&\hbox{si $j_0=0$ et  $q>0$,}\cr
\noalign{\vskip3pt}
\atom{(j_0,\dots,j_q)}=\nu(\pi_n)\atom{(1)\otimes(j_0, \dots, j_q)}\,,\hfill&\hbox{si $j_0>0$ et $q\geq0$.}\hfill
\end{matrix}
\]
Cela prouve déjà qu'il y a au plus un seul \oo-foncteur $c'$ rendant commutatif le triangle~$(*)$,
et permet aussi d'associer à tout atome $y$ de $\On{1+n}$ une cellule $f(y)$ de $C$, image par le \oo-foncteur $\alpha$ d'un atome $x$ de $\Dsk\otimes\On{n}$ tel que $\nu(\pi_n)(x)=y$. Par exemple on peut poser
\[
\begin{matrix}
f\atom{(0)}=\alpha\atom{(0)\otimes(1)}\,,\hfill&\hbox{si $j_0=0$ et $q=0$,}\cr
\noalign{\vskip3pt}
f\atom{(j_0,\dots,j_q)}=\alpha\atom{(0,1)\otimes(j_1, \dots, j_q)}\,,&\hbox{si $j_0=0$ et  $q>0$,}\cr
\noalign{\vskip3pt}
f\atom{(j_0,\dots,j_q)}=\alpha\atom{(1)\otimes(j_0, \dots, j_q)}\,,\hfill&\hbox{si $j_0>0$ et $q\geq0$.}\hfill
\end{matrix}
\]
Pour montrer l'existence d'un \oo-foncteur $c'$ rendant commutatif le triangle $(*)$, on va construire, par récurrence sur $q\geq0$, une suite de \oo-foncteurs $c'_q:\tb{q}(\On{1+n})\to C$ tels que $c'_q\tb{q}(\nu(\pi_n))=\alpha|\tb{q}(\Dsk\otimes\On{n})$ et $c'_{q+1}|\tb{q}(\On{1+n})=c'_q$, ce qui prouvera l'assertion.
\smallbreak

Pour $q=0$, on pose $c'\atom{(j_0)}=f\atom{(j_0)}$, pour $0\leq j_0\leq1+n$. Comme $\alpha$ est une transformation oplax de source le \oo-foncteur constant de valeur $c$, pour $1\leq i_0\leq1+n$, on a $\alpha\atom{(0)\otimes(i_0)}=c$, et par suite
\[
c'_0\nu(\pi_n)\atom{(0)\otimes(i_0)}=c'_0\atom{(0)}=f\atom{(0)}=\alpha\atom{(0)\otimes(1)}=c=\alpha\atom{(0)\otimes(i_0)}\,.
\]
D'autre part, on a
\[
c'_0\nu(\pi_n)\atom{(1)\otimes(i_0)}=c'_0\atom{(i_0)}=f\atom{(i_0)}=\alpha\atom{(1)\otimes(i_0)}\,,
\]
ce qui prouve que $c'_0\tb{0}(\nu(\pi_n))=\alpha|\tb{0}(\Dsk\otimes\On{n})$. Supposons maintenant que $c'_q$ soit construit. On remarque que l'application associant à un atome $y$ de dimension $q+1$ de $\On{1+n}$ la $(q+1)$\nbd-cellule $f(y)$ de $C$ est compatible à la formation des sources et de buts au sens du paragraphe~\ref{paragr:def_pol}. En effet, il existe un atome $x$ de $\Dsk\otimes\On{n}$ tel que $y=\nu(\pi_n)(x)$ et $f(y)=\alpha(x)$, et par suite, on a
\[\begin{aligned}
&s(f(y))=s(\alpha(x))=\alpha(s(x))=c'_q\nu(\pi_n)(s(x))=c'_q(s(\nu(\pi_n)(x)))=c'_q(s(y))\,,\cr
&t(f(y))=t(\alpha(x))=\alpha(t(x))=c'_q\nu(\pi_n)(t(x))=c'_q(t(\nu(\pi_n)(x)))=c'_q(t(y))\,.
\end{aligned}\]
On en déduit l'existence d'un unique \oo-foncteur $c'_{q+1}:\On{1+n}\to C$ tel que $c'_{q+1}|\tb{q}(\On{1+n})=c'_q$ et tel que pour tout atome $y$ de dimension $q+1$ de $\On{1+n}$, on ait $c'_{q+1}(y)=f(y)$. Il reste à prouver l'égalité $c'_{q+1}\tb{q+1}(\nu(\pi_n))=\alpha|\tb{q+1}(\Dsk\otimes\On{n})$. Or, en vertu du théorème~\ref{thm:Steiner_pol}, de la proposition~\ref{prop:produit_Stf}, et du paragraphe~\ref{paragr:def_cX}, la \oo-catégorie $\Dsk\otimes\On{n}$ est engendrée librement au sens des polygraphes par ses atomes. Il suffit donc de montrer que pour tout atome $x$ de dimension $q+1$ de $\Dsk\otimes\On{n}$, on a $c'_{q+1}\nu(\pi_n)(x)=\alpha(x)$. Le seul cas non trivial est celui des atomes de la forme $x=\atom{(0)\otimes(i_0,\dots,i_{q+1})}$, pour $1\leq i_0<\cdots<i_{q+1}\leq n$. Comme  $\alpha$ est une transformation oplax de source le \oo-foncteur constant de valeur $c$, on a $\alpha(x)=1^{q+1}_c$. D'autre part, en vertu de l'égalité $(1')$, on a $\nu(\pi_n)(x)=1^{q+1}_{\atom{(0)}}$, et par suite
\[
c'_{q+1}\nu(\pi_n)(x)=c'_{q+1}(1^{q+1}_{\atom{(0)}})=1^{q+1}_{c'_0\atom{(0)}}=1^{q+1}_c=\alpha(x)\,,
\]
ce qui achève la démonstration.
\end{proof}

\section{Le théorème A \pdfoo-catégorique}\label{section:ThA_ooCat}

\begin{paragr}\label{paragr_def_cotrT}
Soient $A,B,C$ trois \oo-catégories, $u:A\to B$, $v:A\to C$, $w:B\to C$ des \oo-foncteurs, et $\alpha:v\to wu$ une transformation oplax, formant un triangle
\[\mathcal{T}=
\raise 25pt
\vbox{
    \shorthandoff{;}
    \xymatrix@C=1.5pc{
      A \ar[rr]^u \ar[dr]_(.45){v}_{}="f" & & B \ar[dl]^(.45){w} \\
      & C
      \ar@{}"f";[ur]_(.15){}="ff"
      \ar@{}"f";[ur]_(.55){}="oo"
      \ar@<-0.5ex>@2"ff";"oo"^{\alpha}
      &
    }
}
\]
dans $\ooCat$, commutatif à transformation oplax donnée près. Pour tout objet $c$ de $C$, on définit un \oo-foncteur
\[
\cotr{\mathcal{T}}{c}:\cotr{A}{c}\to\cotr{B}{c}
\]
comme suit. Pour $T$ une \oo-catégorie, se donner un \oo-foncteur de $T$ vers $\cotr{A}{c}$ revient à se donner un couple $(t,\tau)$, où $t:T\to A$ est un \oo-foncteur, et $\tau:c\to vt$ une transformation oplax de source le \oo-foncteur constant de $T$ vers $C$ de valeur $c$ et de but le \oo-foncteur composé $vt$ (voir proposition~\ref{prop:cotr_et_tr_opl}). On en déduit un couple $(ut,(\alpha\comp t)\tau)$
\[\xymatrixcolsep{3.2pc}\xymatrixrowsep{2.8pc}
    \shorthandoff{;}
    \xymatrix{
      T \ar[r]^t \ar[dr]_{}="g"_(.50){c}
      & A \ar[r]^{u}_(.75){}="fp" \ar[d]_(.70){}="gp"_(.50){v} & B
      \ar[dl]_{}="gpp"^(.50){w} \\
      & C
      \ar@{}"g";[u]_(0.10){}="x"
      \ar@{}"g";[u]_(.85){}="y"
      \ar@<-0.1ex>@2"x";"y"^(.50){\tau}
      \ar@{}"gp";"fp"_(.25){}="x2"
      \ar@{}"gp";"fp"_(.75){}="y2"
      \ar@<0.4ex>@2"x2";"y2"^{\alpha}
      &
    }
  \]
correspondant à un \oo-foncteur de $T$ vers $\cotr{B}{c}$. On définit ainsi une application
\[
\Hom_{\ooCat}(T,\cotr{A}{c})\to\Hom_{\ooCat}(T,\cotr{B}{c})\,,
\]
naturelle en $T$, d'où en vertu du lemme de Yoneda, un \oo-foncteur $\cotr{\mathcal{T}}{c}:\cotr{A}{c}\to\cotr{B}{c}$.
\end{paragr}

\begin{thm}\label{thm:Th_A_oocat}
Soit $\mathcal{T}$ un triangle dans $\ooCat$, commutatif à transformation oplax donnée près:
\[\mathcal{T}=
\raise 25pt
\vbox{
    \shorthandoff{;}
    \xymatrix@C=1.5pc{
      A \ar[rr]^u \ar[dr]_(0.45){v}_{}="f" & & B \ar[dl]^(0.45){w} \\
      & C
      \ar@{}"f";[ur]_(.15){}="ff"
      \ar@{}"f";[ur]_(.55){}="oo"
      \ar@<-0.5ex>@2"ff";"oo"^{\alpha}
      &.
    }
}
\]
Si pour tout objet $c$ de $C$, le \oo-foncteur $\cotr{\mathcal{T}}{c}:\cotr{A}{c}\to\cotr{B}{c}$ est une équivalence faible, alors il en est de même de $u$.
\end{thm}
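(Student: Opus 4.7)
La stratégie est de suivre pas à pas la preuve simpliciale du théorème~\ref{thm:th_A_1-Cat}, en remplaçant le nerf usuel par le nerf de Street et les tranches catégoriques par les tranches \oo-catégoriques introduites à la section~\ref{section:prelim_cotr}, la proposition~\ref{prop:iso_comp_cotr} servant de pont entre les deux points de vue. Comme dans le cas $1$-catégorique, on associera à tout \oo-foncteur $f : X \to Y$ l'ensemble bisimplicial
\[
(S(f))_{m, n} = \{(y', x) \mid y' \in \Nrf(Y)_{m+1+n},\ x \in \Nrf(X)_n,\ y'_{m+1, \dots, m+1+n} = \Nrf(f)(x)\}
\]
muni de son morphisme d'oubli $U_f : S(f) \to \Nrf(X)$, le but étant vu comme ensemble bisimplicial constant en~$m$. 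Le premier pas sera purement formel : pour tout $n \geq 0$, le morphisme $(U_f)_{\bullet, n}$ s'identifiera à la somme $\coprod_{x \in \Nrf(X)_n} \tr{\Nrf(Y)}{\Nrf(f)(x)} \to \coprod_{x} \ast$, chacune des tranches simpliciales étant contractile en vertu du lemme~\ref{lemme:decal}; le lemme bisimplicial~\ref{lembsmpl} entraînera alors que $U_f$ est une équivalence faible diagonale.

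Le c\oe{}ur de la preuve consistera à construire, à partir de la transformation oplax $\alpha$, un morphisme bisimplicial $S(\mathcal{T}) : S(v) \to S(w)$ s'insérant dans un carré commutatif avec $U_v$, $U_w$ et $\Nrf(u)$. Informellement, étant donné $(y', a) \in (S(v))_{m, n}$, il faudra transformer la queue $\Nrf(v)(a)$ de $y'$ en $\Nrf(wu)(a)$ tout en utilisant $\alpha$ pour remplir les simplexes \og intermédiaires \fg{} de façon cohérente. D'après la proposition~\ref{prop:tr_opl_hmtp}, $\alpha$ correspond à un \oo-foncteur $\Dsk \otimes A \to C$, et le morphisme bisimplicial recherché devrait s'obtenir via une famille de morphismes de complexes dirigés augmentés analogues à $\pi_n : \cn\Deltan{1} \otimes \cn\Deltan{n} \to \cn\Deltan{1+n}$ construit dans la preuve de la proposition~\ref{prop:iso_comp_cotr}. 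C'est là le principal obstacle prévu : la \og mystérieuse homotopie simpliciale \fg{} évoquée dans l'introduction, dont la définition explicite ainsi que les vérifications (bisimplicialité et commutativité du carré) occuperont toute la section~\ref{section:hmtpsmpl}.

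Une fois $S(\mathcal{T})$ construit, la conclusion suivra le schéma $1$-catégorique. Pour tout $m \geq 0$, la décomposition bisimpliciale selon la $m$-face initiale $\gamma = y'_{0, \dots, m} \in \Nrf(C)_m$ exprimera $(S(v))_{m, \bullet}$ comme somme sur $\gamma$ de tranches simpliciales $\cotr{\Nrf(A)}{\gamma}$, et de même pour $S(w)$. Un argument d'homotopie supplémentaire comparera ces tranches à celles au-dessus du sommet final $\gamma_m$, et la proposition~\ref{prop:iso_comp_cotr} identifiera ces dernières aux nerfs $\Nrf(\cotr{A}{\gamma_m})$ et $\Nrf(\cotr{B}{\gamma_m})$. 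Ainsi $(S(\mathcal{T}))_{m, \bullet}$ deviendra équivalent à une somme de morphismes $\Nrf(\cotr{\mathcal{T}}{\gamma_m})$, qui sont des équivalences faibles par l'hypothèse du théorème. Le lemme bisimplicial rendra alors $S(\mathcal{T})$ une équivalence faible diagonale, et deux-sur-trois appliqué au carré commutatif permettra de conclure que $\Nrf(u)$ est une équivalence faible simpliciale.
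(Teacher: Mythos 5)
Your proposal follows essentially the same route as the paper's proof: the same bisimplicial set $S(f)$, the same contractibility argument for $U_f$, the construction of $S(\mathcal{T})$ from $\alpha$ via morphisms of augmented directed complexes, the row-wise decomposition into sums of slices, the comparison with the slice over the last vertex, the identification via Proposition~\ref{prop:iso_comp_cotr}, and the bisimplicial lemma plus two-out-of-three. One small mislabelling: the ``mysterious simplicial homotopy'' of Section~\ref{section:hmtpsmpl} is not the construction of $S(\mathcal{T})$ (which is carried out in Section~\ref{section:ThA_ooCat} via the morphisms $\kappa_{m,n}$ of~\ref{paragr:def_kappa}), but precisely the ``additional homotopy argument'' you invoke to compare $\cotr{\Nrf A}{\gamma}$ with $\cotr{\Nrf A}{\gamma_m}$.
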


\begin{rem}
On rappelle que, pour $n\geq1$, la catégorie $\nCat{n}$ des petites
$n$\nbd-catégories strictes, et $n$\nbd-foncteurs stricts entre celles-ci,
s'identifie à la sous-catégorie pleine de $\ooCat$ formée des \oo-catégories
dont les $i$\nbd-cellules, pour $i>n$, sont des identités. La notion
d'équivalence faible dans $\ooCat$ induit une notion d'équivalence faible
dans $\nCat{n}$, un $n$\nbd-foncteur étant une équivalence faible s'il l'est
en tant que \oo-foncteur. D'autre part, si $X\to Y$ est un morphisme de
$\nCat{n}$ et $y$ un objet de $Y$, on vérifie immédiatement que la
\oo-catégorie $\cotr{X}{y}$ est en fait une $n$\nbd-catégorie. Ainsi, le
théorème A \oo-catégorique énoncé ci-dessus implique aussitôt un théorème A
$n$\nbd-catégorique dont l'énoncé est le même, sauf qu'on remplace
\og$\ooCat$\fg{} par \og$\nCat{n}$\fg{} et \og\oo-foncteur\fg{} par
\og$n$\nbd-foncteur\fg.
\end{rem}

La suite de cette section est consacrée à la preuve du théorème~\ref{thm:Th_A_oocat}.

\begin{paragr}\label{paragr:def_S_ooCat}
Pour tout \oo-foncteur $f:X\to Y$, on note $S(f)$ l'ensemble bisimplicial défini, avec les notations~\ref{notation:simili_joint}, par
\[\begin{aligned}
(S(f))_{m,n}&=\{(y\in(\Nrf Y)_{m+1+n},\,x\in(\Nrf X)_n)\mid (\Nrf f)(x)=y_{m+1,\dots,m+1+n}\}\cr
\noalign{\vskip 3pt}
&=\{(y:\On{m+1+n}\to Y,\,x:\On{n}\to X)\mid fx=y\,\On{j_{m,n}}\}\,,
\end{aligned}\]
pour $m,n\geq0$, les opérateurs simpliciaux étant définis de la façon suivante. Soient \hbox{$\varphi:\Deltan{m'}\to\Deltan{m}$} et $\psi:\Deltan{n'}\to\Deltan{n}$ des morphismes de $\cDelta$. Pour $(y,x)$ dans $(S(f))_{m,n}$, on pose $(S(f))_{\varphi,\psi}(y,x)=(y\,\On{{\varphi\amalg\psi}},\,x\,\On{\psi})$. 
\smallbreak

On définit un morphisme d'oubli \hbox{$U_f:S(f)\to\Nrf X$} par $(y,x)\mapsto x$, l'ensemble simplicial $\Nrf X$ étant considéré comme ensemble bisimplicial constant en la première variable
\[
(\Nrf X)_{m,n}=(\Nrf X)_n=\{x:\On{n}\to X\}\,.
\]
Pour tout $n\geq0$, le morphisme d'ensembles simpliciaux $(U_f)^{}_{\bullet,n}$ s'identifie à la somme
\[
\textstyle\coprod\limits_{x:\On{n}\to X}\kern -5pt\tr{\Nrf Y}{fx}\kern6pt\toto\kern-3pt\coprod\limits_{x:\On{n}\to X}\kern -5pt\ast\,\,,
\]
indexée par les $n$\nbd-simplexes $x:\On{n}\to X$ de $\Nrf X$, des
morphismes de source $\tr{\Nrf Y}{fx}$ et de but le point simplicial, qui
sont des équivalences faibles puisque, en vertu du lemme~\ref{lemme:decal},
les ensembles simpliciaux $\tr{\Nrf Y}{fx}$ sont contractiles. La stabilité
des équivalences faibles par sommes et le lemme bisimplicial (lemme~\ref{lembsmpl}) impliquent alors que le morphisme d'ensembles bisimpliciaux $U_f$ est une équivalence faible diagonale.
\smallbreak

Enfin, on remarque que, par définition, pour tout $m\geq0$, on a un isomorphisme canonique d'ensembles simpliciaux
\[
(S(f))_{m,\bullet}\simeq\textstyle\coprod\limits_{y:\On{m}\to Y}\cotr{\Nrf X}{y}\,.
\]
\end{paragr}

\emph{Dans la suite, on fixe un triangle dans $\ooCat$, commutatif à transformation oplax donnée près:}
\[\mathcal{T}=
\raise 25pt
\vbox{
    \shorthandoff{;}
    \xymatrix@C=1.5pc{
      A \ar[rr]^u \ar[dr]_(.45){v}_{}="f" & & B \ar[dl]^(0.45){w} \\
      & C
      \ar@{}"f";[ur]_(.15){}="ff"
      \ar@{}"f";[ur]_(.55){}="oo"
      \ar@<-0.5ex>@2"ff";"oo"^{\alpha}
      &.
    }
}
\]

\begin{paragr}\label{paragr:def_kappa}
On va définir un morphisme d'ensembles bisimpliciaux $S(\mathcal{T}):S(v)\to S(w)$ comme suit. Soient $m,n\geq0$ et $(c,a)\in (S(v))_{m,n}$. Par définition, avec les notations~\ref{notation:simili_joint}, on a un diagramme commutatif 
\[
\xymatrixcolsep{.4pc}
\xymatrix{
\On{n}\ar@{}[r]|-{\textstyle\simeq}\ar[d]_{\On{j_{m,n}}}
&\{0\}\otimes\On{n}\ar@{^{(}->}[rr]
&&\Dsk\otimes\On{n}\ar[rrr]^{1_{\Dsk}\otimes a}
&&&\Dsk\otimes A\ar[d]^{\alpha}
\\
\On{m+1+n}\ar[rrrrrr]_c
&&&&&&C
&,
}
\]
d'où un \oo-foncteur
\[
(c,\alpha(1_{\Dsk}\otimes a)):\On{m+1+n}\amalg_{\On{n}}(\Dsk\otimes\On{n})\toto C\,.
\]
On va définir un morphisme de complexes dirigés augmentés 
\[
\kappa_{m,n}:\cn \Deltan{m+1+n}\toto \cn \Deltan{m+1+n}\amalg_{\cn \Deltan{n}}(\cn \Deltan{1}\otimes \cn \Deltan{n})\,,
\]
d'où en vertu du théorème~\ref{thm:produit_Gray}, des propositions~\ref{prop:produit_Stf} et~\ref{prop:amalg_Stf}, et du paragraphe~\ref{paragr:def_cX},
un \oo-foncteur
\[
\nu(\kappa_{m,n}):\On{m+1+n}\toto\On{m+1+n}\amalg_{\On{n}}(\Dsk\otimes\On{n})\,,
\]
et on posera
\[
S(\mathcal{T})(c,a)=((c,\alpha(1_{\Dsk}\otimes a))\nu(\kappa_{m,n}),ua)\,.
\]
Le complexe dirigé augmenté $\cn \Deltan{m+1+n}$ admet comme base l'ensemble gradué $E$,
\[
E_p = \{(i_0, \dots, i_p) \mid 0 \le i_0 < \cdots < i_p \le m+1+n\}\,,\quad p\geq0\,,
\]
et $\cn \Deltan{n}$, identifié à un sous-complexe dirigé augmenté de $\cn \Deltan{m+1+n}$ par le morphisme $\cn (j_{m,n})$, admet comme base le sous-ensemble gradué $E'$,
\[
E'_p = \{(i_0, \dots, i_p) \mid  m+1\leq i_0<\cdots<i_p\leq m+1+n \}\,,\quad p\geq0\,,
\]
de $E$. En identifiant $\cn \Deltan{n}$ au sous-complexe dirigé augmenté $\{0\}\otimes \cn \Deltan{n}$ de $\cn \Deltan{1}\otimes \cn \Deltan{n}$, les éléments
\[
(i_0, \dots, i_p)\,,\ (i'_0, \dots, i'_p)\,,\ (\bar \imath_0, \dots, \bar \imath_p)\,,\quad m+1\leq i_0<\cdots<i_p\leq m+1+n\,,\quad p\geq0\,,
\]
où on a posé $(i'_0, \dots, i'_p)=(1)\otimes(i_0, \dots, i_p)$ et $(\bar \imath_0, \dots, \bar \imath_p)=(01)\otimes(i_0, \dots, i_p)$, forment une base de $\cn \Deltan{1}\otimes \cn \Deltan{n}$. Avec ces notations, en vertu de la proposition~\ref{prop:amalg_Stf}, les éléments
\[\begin{matrix}
&(i_0, \dots, i_p)\,,\hfill&\qquad0\leq i_0<\cdots<i_p\leq m+1+n\,,\quad p\geq0\,,\hfill\cr
\noalign{\vskip 3pt}
&(i'_0, \dots, i'_p)\,,\quad (\bar \imath_0, \dots, \bar \imath_p)\,,&\qquad m+1\leq i_0<\cdots<i_p\leq m+1+n\,,\quad p\geq0\,,
\end{matrix}\]
forment une base de $\cn \Deltan{m+1+n}\amalg_{\cn \Deltan{n}}(\cn \Deltan{1}\otimes \cn \Deltan{n})$. Par définition, le morphisme de groupes gradués sous-jacent à $\kappa_{m,n}$ associe, à un élément $(i_0, \dots, i_p)$ de la base de $\cn \Deltan{m+1+n}$, l'élément homogène de degré $p$
\[\begin{matrix}
&(1)\qquad&(i_0, \dots, i_p)\hfill &\qquad\qquad\hbox{si}\quad r\geq2\,,\cr
\noalign{\vskip 4pt}
&(2)\qquad&(i_0, \dots, i_p)+(\bar \imath_1, \dots, \bar \imath_p)&\qquad\qquad\hbox{si}\quad r=1\,,\cr
\noalign{\vskip 4pt}
&(3)\qquad&(i'_0, \dots, i'_p)\hfill&\qquad\qquad\hbox{si}\quad r=0\,
\end{matrix}\kern 60pt\]
de $\cn \Deltan{m+1+n}\amalg_{\cn \Deltan{n}}(\cn \Deltan{1}\otimes \cn \Deltan{n})$, où $r=\card\{0\leq k\leq p\mid i_k\leq m\}$, et où par convention dans $(2)$, si $p=0$, alors $(\bar \imath_1, \dots, \bar \imath_p)=0$ (de sorte que pour $p=0$, on a $\kappa_{m,n}(i_0)=(i_0)$ si $i_0\leq m$ et $\kappa_{m,n}(i_0)=(i'_0)$ si $i_0> m$). \'Etant donné des entiers $0\leq i_0\leq\cdots\leq i_p\leq n$, s'il existe $l$ tel que $0\leq l<p$ et tel que  $i_l=i_{l+1}$, les formules définissant $\kappa_{m,n}$ sont compatibles avec la convention $(i_0,\dots,i_p)=0$ (voir le paragraphe~\ref{paragr:def_cX}). En effet, cela est évident pour les cas $(1)$ et $(3)$, et dans le cas (2), cela résulte du fait que $r=1$ implique que $l\geq1$.
\smallbreak

On doit vérifier que $\kappa_{m,n}$ est un morphisme de complexes dirigés augmentés, que pour tout $(c,a)\in (S(v))_{m,n}$,  le couple $((c,\alpha(1_{\Dsk}\otimes a))\nu(\kappa_{m,n}),ua)$ appartient à $(S(w))_{m,n}$, et que la famille des applications $(S(\mathcal{T}))_{m,n}$, $m,n\geq0$, ainsi définies est un morphisme d'ensembles bisimpliciaux. 
\end{paragr}

\begin{paragr}\label{paragr:comp_diff_kappa}
Montrons pour commencer que $\kappa_{m,n}$ est un morphisme de complexes dirigés augmentés. La compatibilité à l'augmentation et aux sous-monoïdes de positivité est évidente. Vérifions la compatibilité aux différentielles. Soient $p\geq0$ et $(i_0,\dots,i_p)$ un élément de la base de $\cn \Deltan{m+1+n}$; on doit prouver que
\[
d\kappa_{m,n}(i_0,\dots,i_p)=\kappa_{m,n}d(i_0,\dots,i_p)\,.
\]
Soit $r=\card\{0\leq k\leq p\mid i_k\leq m\}$. Si $r=0$ ou $r>2$, l'égalité est évidente. Il reste à la vérifier pour $r=1$ et $r=2$. Si $r=1$, autrement dit si $i_0\leq m$ et $i_k>m$, pour $1\leq k\leq p$, on a
\[\begin{aligned}
d\kappa_{m,n}(i_0,\dots,i_p)&=d((i_0, \dots, i_p)+(\bar \imath_1, \dots, \bar \imath_p))\cr
\noalign{\vskip 3pt}
&=\textstyle\sum\limits_{k=0}^p(-1)^k(i_0,\dots,\hat i_k,\dots,i_p)\cr
\noalign{\vskip -8pt}
&\qquad\kern7pt+(i'_1, \dots, i'_p)-(i_1, \dots, i_p)-\textstyle\sum\limits_{k=1}^p(-1)^{k-1}(\bar \imath_1,\dots,\hat{ \bar {\imath}}_k,\dots,\bar \imath_p)\cr
&=(i'_1, \dots, i'_p)+\textstyle\sum\limits_{k=1}^p(-1)^k\bigl((i_0,\dots,\hat i_k,\dots,i_p)+(\bar {\imath}_1,\dots,\hat{\bar{\imath}}_k,\dots,\bar \imath_p)\bigr)\cr
&=\kappa_{m,n}\Bigl(\textstyle\sum\limits_{k=0}^p(-1)^k(i_0,\dots,\hat i_k,\dots,i_p)\Bigr)=\kappa_{m,n}d(i_0,\dots,i_p)\,,
\end{aligned}\]
et si $r=2$, autrement dit si $i_0,i_1\leq m$ et $i_k>m$, pour $2\leq k\leq p$, on a
\[\begin{aligned}
\kappa_{m,n}d(i_0,\dots,i_p)&=\kappa_{m,n}\Bigl(\textstyle\sum\limits_{k=0}^p(-1)^k(i_0,\dots,\hat i_k,\dots,i_p)\Bigr)\cr
\noalign{\vskip 3pt}
&=(i_1, \dots, i_p)+(\bar \imath_2, \dots, \bar \imath_p)-((i_0,i_2, \dots, i_p)+(\bar \imath_2, \dots, \bar \imath_p))\cr
\noalign{\vskip -3pt}
&\qquad\kern7pt+\textstyle\sum\limits_{k=2}^p(-1)^k(i_0,\dots,\hat i_k,\dots,i_p)\cr
&=\textstyle\sum\limits_{k=0}^p(-1)^k(i_0,\dots,\hat i_k,\dots,i_p)=d(i_0,\dots,i_p)=d\kappa_{m,n}(i_0,\dots,i_p)\,.
\end{aligned}\]
\end{paragr}

\begin{paragr}
Vérifions maintenant que pour tout élément $(c,a)$ de $(S(v))_{m,n}$, le couple $((c,\alpha(1_{\Dsk}\otimes a))\nu(\kappa_{m,n}),ua)$ appartient à $(S(w))_{m,n}$. On remarque qu'en vertu du cas~(3) de la définition de $\kappa_{m,n}$ dans le paragraphe~\ref{paragr:def_kappa}, et avec les notations~\ref{notation:simili_joint}, on a un diagramme commutatif 
\[
\xymatrixcolsep{.6pc}
\xymatrix{
&\On{n}\ar@{}[r]|-{\textstyle\simeq}\ar[d]_{\On{j_{m,n}}}
&\{1\}\otimes\On{n}\ar@{^{(}->}[r]^{\{1\}\otimes1_{\On{n}}}
&\Dsk\otimes\On{n}\ar@{^{(}->}[d]^{\mathrm{can}}
\\
&\On{m+1+n}\ar[rr]_-{\nu(\kappa_{m,n})}
&&\On{m+1+n}\amalg_{\On{n}}(\Dsk\otimes\On{n})
&.
}
\]
On a donc
\[\begin{aligned}
(c,\alpha(1_{\Dsk}\otimes a))\nu(\kappa_{m,n})\On{j_{m,n}}&=\alpha(1_{\Dsk}\otimes a)(\{1\}\otimes1_{\On{n}})\cr
&=\alpha(\{1\}\otimes1_A)a=wua\,,
\end{aligned}\]
ce qui prouve l'assertion. On définit donc bien une application
\[
(S(\mathcal{T}))_{m,n}:(S(v))_{m,n}\to(S(w))_{m,n}\,,\qquad(c,a)\mapsto((c,\alpha(1_{\Dsk}\otimes a))\nu(\kappa_{m,n}),ua)\,.
\]
\end{paragr}

\begin{paragr}\label{paragr:comp_smpl_kappa}
On va montrer maintenant que les applications $(S(\mathcal{T}))_{m,n}$, $m,n\geq0$, ainsi définies, forment un morphisme d'ensembles bisimpliciaux $S(\mathcal{T}):S(v)\to S(w)$, autrement dit que pour tous morphismes $\varphi:\Deltan{m'}\to\Deltan{m}$ et $\psi:\Deltan{n'}\to\Deltan{n}$ de $\cDelta$, le carré suivant est commutatif
\[
\xymatrixcolsep{4pc}
\xymatrix{
&(S(v))_{m,n}\ar[r]^{(S(\mathcal{T}))_{m,n}}\ar[d]_{(S(v))_{\varphi,\psi}}
&(S(w))_{m,n}\ar[d]^{(S(w))_{\varphi,\psi}}
\\
&(S(v))_{m',n'}\ar[r]_{(S(\mathcal{T}))_{m',n'}}
&(S(w))_{m',n'}
&\kern -30pt.\kern30pt
}
\]
Or, pour tout $(c,a)$ dans $(S(v))_{m,n}$, on a
\[\begin{aligned}
(S(\mathcal{T}))_{m',n'}(S(v))_{\varphi,\psi}(c,a)&=(S(\mathcal{T}))_{m',n'}(c\,\On{\varphi\amalg\psi},a\,\On{\psi})\cr
\noalign{\vskip 3pt}
&=((c\,\On{\varphi\amalg\psi},\alpha(1_{\Dsk}\otimes a\,\On{\psi}))\nu(\kappa_{m',n'}),ua\,\On{\psi})\cr
\noalign{\vskip 3pt}
&=((c,\alpha(1_{\Dsk}\otimes a))\cr
&\kern 20pt\circ(\On{\varphi\amalg\psi}\amalg_{\On{\psi}}(1_{\Dsk}\otimes \On{\psi}))\nu(\kappa_{m',n'}),ua\,\On{\psi})\,,\cr
\noalign{\vskip 6pt}
(S(w))_{\varphi,\psi}(S(\mathcal{T}))_{m,n}(c,a)&=(S(w))_{\varphi,\psi}((c,\alpha(1_{\Dsk}\otimes a))\nu(\kappa_{m,n}),ua)\cr
\noalign{\vskip 3pt}
&=((c,\alpha(1_{\Dsk}\otimes a))\nu(\kappa_{m,n})\On{\varphi\amalg\psi},ua\On{\psi})\,.
\end{aligned}\]
Il suffit donc de montrer que le carré suivant est commutatif
\[
\xymatrixcolsep{3pc}
\xymatrix{
\cn \Deltan{m'+1+n'}\ar[r]^-{\kappa_{m',n'}}\ar[d]_{\cn (\varphi\amalg\psi)}
&\cn \Deltan{m'+1+n'}\amalg_{\cn \Deltan{n'}}(\cn \Deltan{1}\otimes \cn \Deltan{n'})\ar[d]^{\cn (\varphi\amalg\psi)\amalg_{\cn (\psi)}(1_{\cn \Deltan{1}}\otimes \cn (\psi))}
\\
\cn \Deltan{m+1+n}\ar[r]_-{\kappa_{m,n}}
&\cn \Deltan{m+1+n}\amalg_{\cn \Deltan{n}}(\cn \Deltan{1}\otimes \cn \Deltan{n})\,,
}
\]
ce qui résulte aussitôt des formules définissant $\kappa_{m,n}$ dans le paragraphe~\ref{paragr:def_kappa} et du commentaire qui les suit.
\end{paragr}

\begin{paragr}
Ainsi, on dispose d'un carré commutatif d'ensembles bisimpliciaux
\[
\xymatrixcolsep{3pc}
\xymatrix{
S(v)\ar[r]^{S(\mathcal{T})}\ar[d]_{U_v}
&S(w)\ar[d]^{U_w}
\\
\Nrf A\ar[r]_{\Nrf u}
&\Nrf B
}
\]
dont les flèches verticales sont, en vertu du paragraphe~\ref{paragr:def_S_ooCat}, des équivalences faibles diagonales. On va montrer que l'hypothèse du théorème~\ref{thm:Th_A_oocat} implique que $S(\mathcal{T})$ est aussi une équivalence faible diagonale. On en déduira par deux sur trois qu'il en est de même pour le morphisme bisimplicial constant en la première variable $\Nrf u$, autrement dit, que le morphisme d'ensembles simpliciaux $\Nrf u$ est une équivalence faible, ce qui prouvera le théorème~\ref{thm:Th_A_oocat}.

Or, en vertu du paragraphe~\ref{paragr:def_S_ooCat}, pour tout $m\geq0$, on a des isomorphismes d'ensembles simpliciaux
\[
(S(v))_{m,\bullet}\simeq\textstyle\coprod\limits_{c:\On{m}\to C}\cotr{\Nrf A}{c}\qquad\hbox{et}\qquad(S(w))_{m,\bullet}\simeq\textstyle\coprod\limits_{c:\On{m}\to C}\cotr{\Nrf B}{c}\,.
\] 
D'autre part, dans les notations~\ref{notation:simili_joint}, pour tout $n\geq0$, on a, par définition de $\kappa_{m,n}$, un carré commutatif
\[
\xymatrixcolsep{3pc}
\xymatrix{
&\On{m}\ar[r]^-{\On{i_{m,n}}}\ar[d]_{\On{i_{m,n}}}
&\On{m+1+n}\ar[d]^{\mathrm{can}}
\\
&\On{m+1+n}\ar[r]_-{\nu(\kappa_{m,n})}
&\On{m+1+n}\amalg_{\On{n}}(\Dsk\otimes\On{n})
&\kern -30pt.\kern30pt
}
\] 
On en déduit que pour tout $(c',a)\in (S(v))_{m,n}$, on a 
\[
(c',\alpha(1_{\Dsk}\otimes a))\nu(\kappa_{m,n})\On{i_{m,n}}=c'\On{i_{m,n}}\,,
\]
et par suite, le morphisme d'ensembles simpliciaux \hbox{$(S(\mathcal{T}))_{m,\bullet}\kern -.7pt:\kern -.7pt(S(v))_{m,\bullet}\to(S(w))_{m,\bullet}$} s'identifie à la somme 
\[
\textstyle\coprod\limits_{c:\On{m}\to C}\cotr{\Nrf A}{c}\kern 3pt\toto\kern -3pt\textstyle\coprod\limits_{c:\On{m}\to C}\cotr{\Nrf B}{c}\,,
\]
indexée par les $m$\nbd-simplexes $c$ de $\Nrf C$, des morphismes, notés
$\cotr{\Nrf\mathcal{T}}{c}:\cotr{\Nrf A}{c}\to\cotr{\Nrf B}{c}$, associant à
un $n$\nbd-simplexe $(c',a)$ de $\cotr{\Nrf A}{c}$ le $n$\nbd-simplexe
$((c',\alpha(1_{\Dsk}\otimes a))\nu(\kappa_{m,n}),\,ua)$ de $\cotr{\Nrf
B}{c}$. En vertu du lemme bisimplicial (lemme~\ref{lembsmpl}) et de la stabilité par sommes des équivalences faibles, pour montrer que $S(\mathcal{T})$ est une équivalence faible diagonale, il suffit de montrer que ces morphismes sont des équivalences faibles.
\end{paragr}

\begin{paragr}
Il s'agit donc de montrer que pour tout $m$\nbd-simplexe $c:\On{m}\to C$ de $\Nrf C$, le morphisme $\cotr{\Nrf\mathcal{T}}{c}:\cotr{\Nrf A}{c}\to\cotr{\Nrf B}{c}$ est une équivalence faible simpliciale. Or, il résulte du paragraphe~\ref{paragr:comp_smpl_kappa} qu'on a un carré commutatif d'ensembles simpliciaux
\[
\xymatrixcolsep{4pc}
\xymatrix{
&(S(v))_{m,\bullet}\ar[r]^{(S(\mathcal{T}))_{m,\bullet}}\ar[d]_{(S(v))_{\varphi,\bullet}}
&(S(w))_{m,\bullet}\ar[d]^{(S(w))_{\varphi,\bullet}}
\\
&(S(v))_{0,\bullet}\ar[r]_{(S(\mathcal{T}))_{0,\bullet}}
&(S(w))_{0,\bullet}
&\kern -30pt,\kern30pt
}
\]
où $\varphi:\Deltan{0}\to\Deltan{m}$ désigne le morphisme de $\cDelta$ défini par $0\mapsto m$. En vertu du paragraphe précédent, ce carré induit un carré commutatif
\[
\raise 20pt\vbox{
\xymatrixcolsep{4pc}
\xymatrix{
&\cotr{\Nrf A}{c}\ar[r]^{\cotr{\Nrf\mathcal{T}}{c}}\ar[d]
&\cotr{\Nrf B}{c}\ar[d]
\\
&\cotr{\Nrf A}{c_m}\ar[r]_{\cotr{\Nrf\mathcal{T}}{c_m}}
&\cotr{\Nrf B}{c_m}
&\kern -40pt,\kern40pt
}
}\leqno(*)
\]
dont la flèche verticale de gauche associe à un $n$\nbd-simplexe $(c',a)$ de $\cotr{\Nrf A}{c}$, le $n$\nbd-simplexe $(c'_{m,m+1,\dots,m+1+n},a)$ de $\cotr{\Nrf A}{c_m}$, celle de droite étant définie de façon analogue. 
\smallbreak

D'autre part, en vertu de la proposition~\ref{prop:iso_comp_cotr}, on a des isomorphismes canoniques $\cotr{\Nrf A}{c_m}\simeq\Nrf(\cotr{A}{c_m})$ et $\cotr{\Nrf B}{c_m}\simeq\Nrf(\cotr{B}{c_m})$, et le carré
\[
\xymatrixcolsep{4pc}
\xymatrix{
\cotr{\Nrf A}{c_m}\ar[r]^{\cotr{\Nrf\mathcal{T}}{c_m}}\ar[d]_(.45)\wr
&\cotr{\Nrf B}{c_m}\ar[d]^(.45)\wr
\\
\Nrf(\cotr{A}{c_m})\ar[r]_{\Nrf(\cotr{\mathcal{T}}{c_m})}
&\Nrf(\cotr{B}{c_m})
}
\]
est commutatif. En effet, en vertu de la description de ces isomorphismes, donnée dans la preuve de cette proposition, avec les notations de cette preuve, et en tenant compte des définitions de $\cotr{\Nrf\mathcal{T}}{c_m}$ et de $\cotr{\mathcal{T}}{c_m}$, il suffit de montrer que, pour tout $n$\nbd-simplexe $(c',a)$ de $\cotr{\Nrf A}{c_m}$, on a 
\[
\bigl(ua,(c',\alpha(1_{\Dsk}\otimes a))\nu(\kappa_{0,n})\nu(\pi_n)\bigr)=\bigl(ua,(\alpha\comp a)(c'\nu(\pi_n))\bigr)\,,
\]
où $(\alpha\comp a)(c'\nu(\pi_n))$ désigne le composé vertical des transformations oplax $\alpha\comp a$ et $c'\nu(\pi_n)$. En vertu de la définition de cette composition (voir le paragraphe~\ref{paragr:def_comp_vert}), il s'agit de montrer que le pentagone suivant est commutatif
\[
\xymatrixrowsep{.4pc}
\xymatrixcolsep{3.5pc}
\xymatrix{
\Dsk\otimes \On{n}\ar[dd]\ar[rr]^{\nu(\pi_n)}\ar[dd]_{\delta\otimes1_{\On{n}}}
&&\On{1+n}\ar[ddd]^{\nu(\kappa_{0,n})}
\\
\\
(\Dsk\amalg_{\Deltan{0}}\Dsk)\otimes \On{n}\ar@{-}[d]^{\kern -1pt\wr}
\\
(\Dsk\otimes \On{n})\amalg_{\On{n}}(\Dsk\otimes \On{n})\ar[rr]^-{\nu(S)(1_{\Dsk\otimes \On{n}}\amalg_{\On{n}}\nu(\pi_n))}\ar[rddd]_{(\alpha(1_{\Dsk}\otimes a),c'\nu(\pi_n))\kern17pt}
&&\On{1+n}\amalg_{\On{n}}(\Dsk\otimes \On{n})\ar[lddd]^{\kern 15pt(c',\alpha(1_{\Dsk}\otimes a))}
\\
\\
\\
&C
&.
}
\]
Or, le triangle du bas, où $S$ désigne l'isomorphisme canonique
\[
(\cn \Deltan{1}\otimes \cn \Deltan{n})\amalg_{\cn \Deltan{n}}\cn \Deltan{1+n}\toto \cn \Deltan{1+n}\amalg_{\cn \Deltan{n}}(\cn \Deltan{1}\otimes \cn \Deltan{n})\,,
\]
est trivialement commutatif. Il suffit donc de montrer que le carré
\[
\xymatrixrowsep{.4pc}
\xymatrixcolsep{3pc}
\xymatrix{
\cn \Deltan{1}\otimes \cn \Deltan{n}\ar[dd]\ar[rr]^{\pi_n}\ar[dd]_{\lambda(\delta)\otimes1_{\cn \Deltan{n}}}
&&\cn \Deltan{1+n}\ar[ddd]^{\kappa_{0,n}}
\\
\\
(\cn \Deltan{1}\amalg_{\cn \Deltan{0}}\cn \Deltan{1})\otimes \cn \Deltan{n}\ar@{-}[d]^{\kern -1pt\wr}
\\
(\cn \Deltan{1}\otimes \cn \Deltan{n})\amalg_{\cn \Deltan{n}}(\cn \Deltan{1}\otimes \cn \Deltan{n})\ar[rr]^-{S(1_{\cn \Deltan{1}\otimes \cn \Deltan{n}}\amalg_{\cn \Deltan{n}}\pi_n)}
&&\cn \Deltan{1+n}\amalg_{\cn \Deltan{n}}(\cn \Deltan{1}\otimes \cn \Deltan{n})
}
\]
est commutatif, ce qui résulte d'une vérification facile laissée au lecteur.
\smallbreak

Comme, en vertu de l'hypothèse du théorème, le morphisme d'ensembles simpliciaux $\Nrf(\cotr{\mathcal{T}}{c_m})$ est une équivalence faible, on en déduit qu'il en est de même du morphisme $\cotr{\Nrf\mathcal{T}}{c_m}$. Pour conclure, il suffit donc de prouver que les deux flèches verticales du carré commutatif $(*)$ sont des équivalences faibles, ce qui résultera de la section suivante.
\end{paragr}

\section{L'homotopie simpliciale}\label{section:hmtpsmpl}

\begin{paragr}
Dans cette section, on fixe deux \oo-catégories $A$ et $B$, un morphisme d'ensembles simpliciaux $u:\Nrf A\to\Nrf B$, un entier $m\geq0$, et un $m$\nbd-simplexe \hbox{$b:\On{m}\to B$} de $\Nrf B$. On a un morphisme d'ensembles simpliciaux
\[
r:\cotr{\Nrf A}{b}\to\cotr{\Nrf A}{b_m}
\]
qui, pour $n\geq0$, associe à un $n$\nbd-simplexe $(y,x)$ de $\cotr{\Nrf A}{b}$,
\[
y:\On{m+1+n}\to B\,,\ x:\On{n}\to A\quad \hbox{tels que}\quad y_{0,\dots,m}=b\,,\ y_{m+1,\dots,m+1+n}=u(x)\,,
\]
le $n$\nbd-simplexe $(y_{m,m+1,\dots,m+1+n},x)$ de $\cotr{\Nrf A}{b_m}$. Le but de cette section est de montrer que ce morphisme est une équivalence d'homotopie simpliciale, et plus précisément, qu'il fait de $\cotr{\Nrf A}{b_m}$ un rétracte par déformation fort de $\cotr{\Nrf A}{b}$.
\end{paragr}

\begin{paragr}\label{paragr:def_section}
Pour commencer, on définit une section
\[
s:\cotr{\Nrf A}{b_m}\to\cotr{\Nrf A}{b}
\]
de ce morphisme comme suit. Il s'agit de définir des applications
\[
s_n:(\cotr{\Nrf A}{b_m})_n\to(\cotr{\Nrf A}{b})_n\,,\quad n\geq0\,,
\]
compatibles aux opérateurs simpliciaux. Soient $n\geq0$ et $(y',x)\in(\cotr{\Nrf A}{b_m})_n$,
\[
y':\On{1+n}\to B\,,\ x:\On{n}\to A\quad \hbox{tels que}\quad y'_{0}=b_m\,,\ y'_{1,\dots,1+n}=u(x)\,.
\]
On en déduit un \oo-foncteur
\[
(b,y'):\On{m}\amalg_{\On{0}}\On{1+n}\to B\,.
\]
On va définir un morphisme de complexes dirigés augmentés 
\[
f_n:\cn \Deltan{m+1+n}\to \cn \Deltan{m}\amalg_{\cn \Deltan{0}}\cn \Deltan{1+n}\,,
\] 
d'où en vertu de la proposition~\ref{prop:amalg_Stf} et des paragraphes~\ref{paragr:def_cX} et~\ref{paragr:def_nrf_Str}, un \oo-foncteur 
\[
\nu(f_n):\On{m+1+n}\to\On{m}\amalg_{\On{0}}\On{1+n}\,, 
\]
et on posera 
\[
s_n(y',x)=((b,y')\nu(f_n),x)\,.
\]

Le complexe dirigé augmenté $\cn \Deltan{m+1+n}$ admet comme base l'ensemble gradué $E$,
\[
E_p = \{(i_0, \dots, i_p) \mid 0 \le i_0 < \cdots < i_p \le m+1+n\}\,,\quad p\geq0\,,
\]
et la somme amalgamée $\cn \Deltan{m}\amalg_{\cn \Deltan{0}}\cn \Deltan{1+n}$ le sous-ensemble gradué $E'$,
\[
E'_p = \{(i_0, \dots, i_p)\in E_p \mid i_p\leq m\quad \hbox{ou}\quad m\leq i_0 \}\,,\quad p\geq0\,,
\]
de $E$, l'inclusion de $E'$ dans $E$ définissant une inclusion de complexes dirigés augmentés
\[
\cn \Deltan{m}\amalg_{\cn \Deltan{0}}\cn \Deltan{1+n}\hookrightarrow \cn \Deltan{m+1+n}\,.
\]
Par définition, le morphisme de groupes gradués sous-jacent à $f_n$ associe, à un élément $(i_0, \dots, i_p)$ de la base de $\cn \Deltan{m+1+n}$, l'élément homogène de degré $p$
\[\begin{matrix}
&(1)\qquad&(i_0, \dots, i_p)\hfill\quad &\hbox{si}\quad i_p\leq m\quad \hbox{ou}\quad m\leq i_0\,,\hfill\cr
\noalign{\vskip 4pt}
&(2)\qquad&(i_0,m)+(m,i_1)\hfill\quad&\hbox{si}\quad p=1\quad\hbox{et}\quad i_0<m<i_1\,,\hfill\cr
\noalign{\vskip 4pt}
&(3)\qquad&(m,i_1, \dots, i_p)\hfill\quad&\hbox{si}\quad p>1\quad\hbox{et}\quad i_0<m<i_1\,,\hfill\cr
\noalign{\vskip 4pt}
&(3')\qquad&(i_0, \dots, i_{p-1},m)\hfill\quad&\hbox{si}\quad p>1\quad\hbox{et}\quad i_{p-1}<m<i_p\,,\hfill\cr
\noalign{\vskip 4pt}
&(4)\qquad&\,0\hfill\quad&\hbox{sinon}\ (\hbox{\emph{i.e.} \,si}\quad i_1\leq m\leq i_{p-1}\,)\,\kern30pt\hfill
\end{matrix}\]
de $\cn \Deltan{m}\amalg_{\cn \Deltan{0}}\cn \Deltan{1+n}$. On rappelle (voir le paragraphe~\ref{paragr:def_cX}) qu'étant donné des entiers $0\leq i_0\leq\cdots\leq i_p\leq n$, s'il existe $k$ tel que $0\leq k<p$ et tel que  $i_k=i_{k+1}$, alors par convention $(i_0,\dots,i_p)=0$. On observe que les formules définissant $f_n$ sont compatibles avec cette convention, dans le sens où ces formules impliquent alors que $f_n(i_0,\dots,i_p)=0$. En effet, cela est évident dans les cas (1) et (4), dans le cas (3), cela résulte du fait que l'inégalité stricte $i_0<i_1$ implique que $k\geq1$, le cas $(3')$ est analogue, et le cas (2) n'intervient pas.
\smallbreak

On doit vérifier que $f_n$ est un morphisme de complexes dirigés augmentés, que pour tout $n$\nbd-simplexe $(y',x)$ de $\cotr{\Nrf A}{b_m}$ le couple $((b,y')\nu(f_n),x)$ est un $n$\nbd-simplexe de $\cotr{\Nrf A}{b}$, que la famille des applications $s_n$, $n\geq0$, ainsi définies est un morphisme d'en\-sem\-bles simpliciaux et que ce morphisme est une section du morphisme $r$.
\end{paragr}

\begin{paragr}\label{paragr:comp_diff}
Montrons d'abord que $f_n$ est un morphisme de complexes dirigés augmentés. La compatibilité à l'augmentation et aux sous-monoïdes de positivité est évidente. Vérifions la compatibilité aux différentielles. Soient $p\geq0$, et $(i_0,\dots,i_p)$ un élément de la base de $\cn \Deltan{m+1+n}$; on doit prouver que
\[
df_n(i_0,\dots,i_p)=f_nd(i_0,\dots,i_p)\,.
\]
On distingue plusieurs cas, en suivant la définition de $f_n$:
\begin{itemize}
\item[(1)] $i_p\leq m$ ou $m\leq i_0$: l'égalité ci-dessus est alors évidente.
\smallbreak
\item[(2)] $p=1$ et $i_0<m<i_1$:
\[\begin{aligned}
df_n(i_0,i_1)&=d((i_0,m)+(m,i_1))=(m)-(i_0)+(i_1)-(m)= (i_1)-(i_0)\cr
&=f_n(i_1)-f_n(i_0)=f_n((i_1)-(i_0))=f_nd(i_0,i_1)\,.
\end{aligned}\]
\item[(3)] $p>1$ et $i_0<m<i_1$:
\[\begin{aligned}
df_n(i_0,\dots,i_p)&=d(m,i_1,\dots,i_p)\cr
&=(i_1,\dots,i_p)+\textstyle\sum\limits_{k=1}^p(-1)^k(m,i_1,\dots,\hat i_k,\dots,i_p)\,.
\end{aligned}\]
Pour le calcul de $f_nd(i_0,\dots,i_p)$, on distingue deux sous-cas:
\begin{itemize}
\item $p=2$:
\[\begin{aligned}
f_nd(i_0,i_1,i_2)&=f_n((i_1,i_2)-(i_0,i_2)+(i_0,i_1))\cr
&=(i_1,i_2)-((i_0,m)+(m,i_2))+((i_0,m)+(m,i_1))\cr
&=(i_1,i_2)-(m,i_2)+(m,i_1).
\end{aligned}\]
\item $p>2$:
\[\begin{aligned}
f_nd(i_0,\dots,i_p)&=f_n\Bigl(\textstyle\sum\limits_{k=0}^p(-1)^k(i_0,\dots,\hat i_k,\dots,i_p)\Bigr)\cr
&=(i_1,\dots,i_p)+\textstyle\sum\limits_{k=1}^p(-1)^k(m,i_1,\dots,\hat i_k,\dots,i_p)\,.
\end{aligned}\]
\end{itemize}
\item[$(3')$] $p>1$ et $i_{p-1}<m<i_p$: le calcul est parfaitement analogue au précédent.
\smallbreak
\item[(4)] $i_1\leq m\leq i_{p-1}$ (ce qui implique $p>1$):
\[
df_n(i_0,\dots,i_p)=0\,.\qquad
\]
Pour le calcul de $f_nd(i_0,\dots,i_p)$, on distingue plusieurs sous-cas:
\begin{itemize}
\item $p=2$ (l'inégalité $i_1\leq m\leq i_{p-1}=i_1$ implique alors que $i_1=m$):
\[\begin{aligned}
f_nd(i_0,i_1,i_2)&=f_n((i_1,i_2)-(i_0,i_2)+(i_0,i_1))\cr
&=(i_1,i_2)-((i_0,m)+(m,i_2))+(i_0,i_1)\cr
&=(m,i_2)-(i_0,m)-(m,i_2)+(i_0,m)=0\,.
\end{aligned}\]
\item $p>2$ et $i_1=m$:
\[\begin{aligned}
\kern 7pt f_nd(i_0,\dots,i_p)&=f_n\Bigl(\textstyle\sum\limits_{k=0}^p(-1)^k(i_0,\dots,\hat i_k,\dots,i_p)\Bigr)\cr
&=(i_1,\dots,i_p)-(m,i_2,\dots,i_p)+\textstyle\sum\limits_{k=2}^p(-1)^k\,0=0\,.
\end{aligned}\]
\item $p>2$ et $i_{p-1}=m$: le calcul est parfaitement analogue au précédent.
\smallbreak
\item $p=3$ et $i_1<m<i_2$:
\[\begin{aligned}
\kern 40pt f_nd(i_0,i_1,i_2,i_3)&=f_n((i_1,i_2,i_3)-(i_0,i_2,i_3)+(i_0,i_1,i_3)-(i_0,i_1,i_2))\cr
&=(m,i_2,i_3)-(m,i_2,i_3)+(i_0,i_1,m)-(i_0,i_1,m)=0\,.
\end{aligned}\]
Désormais le cas $p=3$ est réglé.
\smallbreak

\item $p>3$ et $i_1<m<i_2$:
\[\begin{aligned}
\kern 12pt f_nd(i_0,\dots,i_p)&=f_n\Bigl(\textstyle\sum\limits_{k=0}^p(-1)^k(i_0,\dots,\hat i_k,\dots,i_p)\Bigr)\cr
&=(m,i_2,\dots,i_p)-(m,i_2,\dots,i_p)+\textstyle\sum\limits_{k=2}^p(-1)^k\,0=0\,.
\end{aligned}\]
\item $p>3$ et $i_{p-2}<m<i_{p-1}$: le calcul est parfaitement analogue au précédent.
\smallbreak
\item $p>3$ et $i_2\leq m\leq i_{p-2}$:
\[\begin{aligned}
f_nd(i_0,\dots,i_p)&=f_n\Bigl(\textstyle\sum\limits_{k=0}^p(-1)^k(i_0,\dots,\hat i_k,\dots,i_p)\Bigr)\cr
&=\textstyle\sum\limits_{k=0}^p(-1)^k\,0=0\,.
\end{aligned}\]
\end{itemize}
\end{itemize}
\end{paragr}

\begin{paragr}
Le fait que, pour tout $n$\nbd-simplexe $(y',x)$ de $\cotr{\Nrf A}{b_m}$, le couple $((b,y')\nu(f_n),x)$ est un $n$\nbd-simplexe de $\cotr{\Nrf A}{b}$ 
résulte aussitôt du cas (1) de la définition de $f_n$ dans le paragraphe~\ref{paragr:def_section}. On définit donc bien une application 
\[
s_n:(\cotr{\Nrf A}{b_m})_n\to(\cotr{\Nrf A}{b})_n\,,\qquad(y',x)\mapsto((b,y')\nu(f_n),x)\,.
\]
\end{paragr}

\begin{paragr}\label{paragr:comp_smpl}
On va montrer maintenant que les applications $s_n$, $n\geq0$, ainsi définies, forment un morphisme d'ensembles simpliciaux $s:\cotr{\Nrf A}{b_m}\to\cotr{\Nrf A}{b}$, autrement dit que pour tout $\psi:\Deltan{n'}\to\Deltan{n}$, le carré suivant est commutatif
\[
\xymatrix{
&(\cotr{\Nrf A}{b_m})_n\ar[r]^{s_n}\ar[d]_{(\cotr{\Nrf A}{b_m})_\psi}
&(\cotr{\Nrf A}{b})_n\ar[d]^{(\cotr{\Nrf A}{b})_\psi}
\\
&(\cotr{\Nrf A}{b_m})_{n'}\ar[r]_{s_{n'}}
&(\cotr{\Nrf A}{b})_{n'}
&\kern-20pt.\kern20pt
}
\]
Or, pour tout $n$\nbd-simplexe $(y',x)$ de $\cotr{\Nrf A}{b_m}$, on a
\[\begin{aligned}
&(\cotr{\Nrf A}{b})_\psi s_n(y',x)=(\cotr{\Nrf A}{b})_\psi((b,y')\nu(f_n),x)=((b,y')\nu(f_n)\nu \cn (\psi'),x\nu\cn(\psi))\,,\cr
\noalign{\vskip 3pt}
&s_{n'}(\cotr{\Nrf A}{b_m})_\psi(y',x)=s_{n'}(y'\nu \cn (\psi''),x\nu\cn(\psi))=((b,y'\nu \cn (\psi''))\nu(f_{n'}),x\nu\cn(\psi))\,,
\end{aligned}\]
où avec les notations~\ref{notation:simili_joint},
\[
\psi'=\Deltan{m}\amalg\psi:\Deltan{m+1+n'}\to\Deltan{m+1+n}\qquad\hbox{et}\qquad\psi''=\Deltan{0}\amalg\psi:\Deltan{1+n'}\to\Deltan{1+n}\,.
\]
En tenant compte du diagramme
\[
\xymatrixcolsep{3pc}
\xymatrixrowsep{1pc}
\xymatrix{
\On{m+1+n'}\ar[r]^-{\nu(f_{n'})}\ar[dd]_{\nu \cn (\psi')}
&\On{m}\amalg_{\On{0}}\On{1+n'}\ar[dd]_{\On{m}\amalg_{\On{0}}\nu \cn (\psi'')}\ar[rd]^(.6){\kern 10pt(b,y'\nu \cn (\psi''))}
\\
&&B
\\
\On{m+1+n}\ar[r]_-{\nu(f_n)}
&\On{m}\amalg_{\On{0}}\On{1+n}\ar[ru]_(.6){(b,y')}
}
\]
dont le triangle de droite est commutatif, il suffit de prouver que
\[
f_n\cn (\psi')=(\cn \Deltan{m}\amalg_{\cn \Deltan{0}}\cn (\psi''))f_{n'}\,.
\]
Pour commencer, on observe qu'avec les notations du paragraphe~\ref{paragr:def_section} pour la base de $\cn \Deltan{m}\amalg_{\cn \Deltan{0}}\cn \Deltan{1+n'}$ et en tenant compte de l'inclusion $\cn \Deltan{m}\amalg_{\cn \Deltan{0}}\cn \Deltan{1+n'}\hookrightarrow \cn \Deltan{m+1+n'}$, pour tout $p\geq0$, et tout élément $(i_0,\dots,i_p)$ de la base de $\cn \Deltan{m}\amalg_{\cn \Deltan{0}}\cn \Deltan{1+n'}$, on a
\[
(\cn \Deltan{m}\amalg_{\cn \Deltan{0}}\cn (\psi''))(i_0,\dots,i_p)=\cn (\psi')(i_0,\dots,i_p)=(\psi'(i_0),\dots,\psi'(i_p))\,.
\]
Soit maintenant $(i_0,\dots,i_p)$ un élément de la base de $\cn \Deltan{m+1+n'}$; montrons que
\[
f_n\cn (\psi')(i_0,\dots,i_p)=(\cn \Deltan{m}\amalg_{\cn \Deltan{0}}\cn (\psi''))f_{n'}(i_0,\dots,i_p)\,.
\]
On distingue plusieurs cas en suivant la définition de $f_n$ dans le paragraphe~\ref{paragr:def_section} et en tenant compte de l'observation qui suit cette définition:
\begin{itemize}
\item[(1)] $i_p\leq m$ ou $m\leq i_0$: alors on a aussi $\psi'(i_p)\leq m$ ou $m\leq\psi'(i_0)$, et l'égalité ci-dessus est évidente.
\smallbreak
\item[(2)] $p=1$ et $i_0<m<i_1$: alors on a aussi $\psi'(i_0)<m<\psi'(i_1)$ et
\[
f_n\cn (\psi')(i_0,i_1)=f_n(\psi'(i_0),\psi'(i_1))=(\psi'(i_0),m)+(m,\psi'(i_1))\,,\vrule depth 4pt width0pt\kern 6pt
\]
\[\begin{aligned}
\kern 6pt(\cn \Deltan{m}\amalg_{\cn \Deltan{0}}\cn (\psi''))f_{n'}(i_0,i_1)&=(\cn \Deltan{m}\amalg_{\cn \Deltan{0}}\cn (\psi''))((i_0,m)+(m,i_1))\cr
&=\cn (\psi')((i_0,m)+(m,i_1))\cr
&=(\psi'(i_0),\psi'(m))+(\psi'(m),\psi'(i_1))\cr
&=(\psi'(i_0),m)+(m,\psi'(i_1))\,.
\end{aligned}\]
\item[(3)] $p>1$ et $i_0<m<i_1$: alors on a aussi $\psi'(i_0)<m<\psi'(i_1)$ et
\[
f_n\cn (\psi')(i_0,\dots,i_p)=f_n(\psi'(i_0),\dots,\psi'(i_p))=(m,\psi'(i_1),\dots,\psi'(i_p))\,,
\]
\[\begin{aligned}
(\cn \Deltan{m}\amalg_{\cn \Deltan{0}}\cn (\psi''))f_{n'}(i_0,\dots,i_p)&=(\cn \Deltan{m}\amalg_{\cn \Deltan{0}}\cn (\psi''))(m,i_1,\dots,i_p)\cr
&=\cn (\psi')(m,i_1,\dots,i_p)\cr
&=(\psi'(m),\psi'(i_1),\dots,\psi'(i_p))\cr
&=(m,\psi'(i_1),\dots,\psi'(i_p))\,.
\end{aligned}\]
\item[$(3')$] $p>1$ et $i_{p-1}<m<i_p$: le calcul est parfaitement analogue au précédent.
\smallbreak
\item[(4)] $i_1\leq m\leq i_{p-1}$: alors on a $f_{n'}(i_0,\dots,i_p)=0$ et $\psi'(i_1)\leq m\leq \psi'(i_{p-1})$, d'où
\[
f_n\cn (\psi')(i_0,\dots,i_p)=f_n(\psi'(i_0),\dots,\psi'(i_p))=0\,.
\]
\end{itemize}
Le fait que $s$ est une section du morphisme $r$ résulte aussitôt du cas (1) de la définition de $f_n$.
\end{paragr}

\begin{paragr}\label{paragr:def_hmtp}
Dans ce qui suit, on va définir une homotopie du morphisme composé 
\[
sr:\cotr{\Nrf A}{b}\to\cotr{\Nrf A}{b}
\]
vers le morphisme identité de $\cotr{\Nrf A}{b}$. On vérifie facilement que, pour $n\geq0$, ce morphisme associe à un $n$-simplexe $(y,x)$ de $\cotr{\Nrf A}{b}$, le $n$-simplexe $(y\nu(f_n),x)$, où l'on note aussi $f_n:\cn \Deltan{m+1+n}\to \cn \Deltan{m+1+n}$ le composé de $f_n:\cn \Deltan{m+1+n}\to \cn \Deltan{m}\amalg_{\cn \Deltan{0}}\cn \Deltan{1+n}$ avec l'inclusion canonique $\cn \Deltan{m}\amalg_{\cn \Deltan{0}}\cn \Deltan{1+n}\hookrightarrow \cn \Deltan{m+1+n}$. On définit une homotopie \hbox{$h:\Deltan{1}\times\cotr{\Nrf A}{b}\to\cotr{\Nrf A}{b}$} comme suit. Soient $n\geq0$ et $(\varphi,y,x)$,
\[
\varphi:\Deltan{n}\to\Deltan{1}\,,\ y:\On{m+1+n}\to B\,,\ x:\On{n}\to A\,,\ \ y_{0,\dots,m}=b\,,\ y_{m+1,\dots,m+1+n}=u(x)\,,
\]
un $n$\nbd-simplexe de $\Deltan{1}\times\cotr{\Nrf A}{b}$. On va définir un morphisme de complexes dirigés augmentés $f_\varphi:\cn \Deltan{m+1+n}\to \cn \Deltan{m+1+n}$, et on posera
\[
h(\varphi,y,x)=(y\nu(f_\varphi),x)\,.
\]
On définit $\bar\varphi:\Deltan{m+1+n}\to\Deltan{1}$ en posant
\[
\bar\varphi(i)=\left\{
\begin{matrix}
&0\,,\hfill&\quad0\leq i\leq m\,,\hfill\cr
\noalign{\vskip 3pt}
&\varphi(i-m-1)\,,&\quad m+1\leq i\leq m+1+n\,.
\end{matrix}
\right.
\]
Pour tout élément $(i_0,\dots,i_p)$, $p\geq0$, de la base de $\cn \Deltan{m+1+n}$, il existe un unique entier $k_\varphi$ tel que $-1\leq k_\varphi\leq p$, et tel que
\[
\bar\varphi(i_k)=\left\{
\begin{matrix}
&0\,,&\quad0\leq k\leq k_\varphi\,,\hfill\cr
&1\,,&\quad k_\varphi+1\leq k\leq p\,.
\end{matrix}
\right.
\]
On remarque que si $k_\varphi<p$, alors $i_{k_{\varphi}+1}>m$. On définit $f_\varphi$ en posant
\[
f_\varphi(i_0,\dots,i_p)=f_n(i_0,\dots,i_{k_\varphi})(i_{k_\varphi+1},\dots,i_p)\,,
\]
ce qui signifie que:
\begin{itemize}
\item si $0\leq k_\varphi<p$, et si $f_n(i_0,\dots,i_{k_\varphi})$ est une somme d'éléments de la base de $\cn \Deltan{m+1+n}$, alors $f_\varphi(i_0,\dots,i_p)$ est la somme dont les termes sont obtenus en concaténant à droite $(i_{k_\varphi+1},\dots,i_p)$ à chacun des termes de la somme $f_n(i_0,\dots,i_{k_\varphi})$ (et en particulier, si $f_n(i_0,\dots,i_{k_\varphi})=0$, alors on a aussi $f_\varphi(i_0,\dots,i_p)=0$);
\item si $k_\varphi=-1$, alors $f_\varphi(i_0,\dots,i_p)=(i_0,\dots,i_p)$;
\item si $k_\varphi=p$, alors $f_\varphi(i_0,\dots,i_p)=f_n(i_0,\dots,i_p)$.
\end{itemize}
\'Etant donné des entiers $0\leq i_0\leq\cdots\leq i_p\leq n$, s'il existe $k$ tel que $0\leq k<p\,$ et tel que  $i_k=i_{k+1}$, la définition de $k_\varphi$ garde un sens, et la formule définissant $f_\varphi$ est compatible avec la convention $(i_0,\dots,i_p)=0$. En effet, puisque alors $\bar\varphi(i_k)=\bar\varphi(i_{k+1})$, on a ou bien $k+1\leq k_\varphi$ ou bien $k_\varphi+1\leq k$, et l'assertion résulte de la propriété analogue des formules définissant $f_n$ (voir le paragraphe~\ref{paragr:def_section}).

On doit vérifier que $f_\varphi$ est un morphisme de complexes dirigés augmen\-tés, que pour tout $n$\nbd-simplexe $(\varphi,y,x)$ de $\Deltan{1}\times\cotr{\Nrf A}{b}$, le couple $(y\nu(f_\varphi),x)$ est un $n$\nbd-simplexe de $\cotr{\Nrf A}{b}$, et que $h$ est bien une homotopie simpliciale de $sr$ vers $1_{\cotr{\Nrf A}{b}}$.
\end{paragr}

\begin{paragr}\label{paragr:comp_diff_hmtp}
Montrons pour commencer que $f_\varphi$ est un morphisme de complexes dirigés augmentés. La compatibilité à l'augmentation et aux sous-monoïdes de positivité est évidente. Vérifions la compatibilité aux différentielles. Soient $p\geq0$ et $(i_0,\dots,i_p)$ un élément de la base de $\cn \Deltan{m+1+n}$; on doit prouver que
\[
df_\varphi(i_0,\dots,i_p)=f_\varphi d(i_0,\dots,i_p)\,.
\]
On a
{
\allowdisplaybreaks
% \[
\begin{align*}
f_\varphi
d(i_0,\dots,i_p)&=f_\varphi\Bigl(\textstyle\sum\limits_{k=0}^p(-1)^k(i_0,\dots,\hat
i_k,\dots,i_p)\Bigr) \\
&=\textstyle\sum\limits_{k=0}^{k_\varphi}(-1)^kf_\varphi(i_0,\dots,\hat
i_k,\dots,i_{k_\varphi},i_{k_\varphi+1},\dots,i_p) \\*
\noalign{\vskip -7pt}
&\kern25pt+\textstyle\sum\limits_{k=k_\varphi+1}^p(-1)^kf_\varphi(i_0,\dots,i_{k_\varphi},i_{k_\varphi+1},\dots,\hat
i_k,\dots,i_p)\\
&=\textstyle\sum\limits_{k=0}^{k_\varphi}(-1)^kf_n(i_0,\dots,\hat
i_k,\dots,i_{k_\varphi})(i_{k_\varphi+1},\dots,i_p)\\*
\noalign{\vskip -7pt}
&\kern25pt+(-1)^{k_\varphi+1}\textstyle\sum\limits_{k=0}^{p-k_\varphi-1}(-1)^kf_n(i_0,\dots,i_{k_\varphi})(i_{k_\varphi+1},\dots,\hat
i_{k_\varphi+1+k},\dots,i_p)\\
\noalign{\vskip 6pt}
&=f_nd(i_0,\dots,i_{k_\varphi})(i_{k_\varphi+1},\dots,i_p)\\*
\noalign{\vskip 2pt}
&\kern25pt+(-1)^{k_\varphi+1}f_n(i_0,\dots,i_{k_\varphi})d(i_{k_\varphi+1},\dots,i_p)\\
\noalign{\vskip 6pt}
&=df_n(i_0,\dots,i_{k_\varphi})(i_{k_\varphi+1},\dots,i_p)\\*
\noalign{\vskip 2pt}
&\kern25pt+(-1)^{k_\varphi+1}f_n(i_0,\dots,i_{k_\varphi})d(i_{k_\varphi+1},\dots,i_p)=df_\varphi(i_0,\dots,i_p)\,,
\end{align*}
% \]
}%
l'avant-dernière égalité résultant de la compatibilité de $f_n$ aux différentielles, prouvée dans le paragraphe~\ref{paragr:comp_diff}.
\end{paragr}

\begin{paragr}
Le fait que pour tout $n$\nbd-simplexe $(\varphi,y,x)$ de $\Deltan{1}\times\cotr{\Nrf A}{b}$, le couple $(y\nu(f_\varphi),x)$ est un $n$\nbd-simplexe de $\cotr{\Nrf A}{b}$ résulte de l'observation qu'en vertu du cas (1) de la définition de $f_n$ dans le paragraphe~\ref{paragr:def_section}, pour tout $p\geq0$, et tout $(i_0,\dots,i_p)$ dans la base de $\cn \Deltan{m+1+n}$, si $i_p\leq m$ ou si $m+1\leq i_0$, on a
\[
f_\varphi(i_0,\dots,i_p)=f_n(i_0,\dots,i_{k_\varphi})(i_{k_\varphi+1},\dots,i_p)=(i_0,\dots,i_p)\,.
\]
On définit donc bien une application 
\[
h_n:(\Deltan{1}\times\cotr{\Nrf A}{b})_n\to(\cotr{\Nrf A}{b})_n\,,\qquad(\varphi,y,x)\mapsto(y\nu(f_\varphi),x)\,.
\]
\end{paragr}

\begin{paragr}
On va montrer maintenant que les applications $h_n$, $n\geq0$, ainsi définies, forment un morphisme d'ensembles simpliciaux $h:\Deltan{1}\times\cotr{\Nrf A}{b}\to\cotr{\Nrf A}{b}$, autrement dit que pour tout $\psi:\Deltan{n'}\to\Deltan{n}$, le carré suivant est commutatif
\[
\xymatrix{
&(\Deltan{1}\times\cotr{\Nrf A}{b})_n\ar[r]^-{h_n}\ar[d]_{(\Deltan{1}\times\cotr{\Nrf A}{b})_\psi}
&(\cotr{\Nrf A}{b})_n\ar[d]^{(\cotr{\Nrf A}{b})_\psi}
\\
&(\Deltan{1}\times\cotr{\Nrf A}{b})_{n'}\ar[r]_-{h_{n'}}
&(\cotr{\Nrf A}{b})_{n'}
&\kern-20pt.\kern20pt
}
\]
Or, pour tout $n$\nbd-simplexe $(\varphi,y,x)$ de $\Deltan{1}\times\cotr{\Nrf A}{b}$, on a
\[\begin{aligned}
&\ h_{n'}(\Deltan{1}\times\cotr{\Nrf A}{b})_\psi(\varphi,y,x)=h_{n'}(\varphi\psi,y\nu \cn (\psi'),x\nu\cn(\psi))=(y\nu \cn (\psi')\nu(f_{\varphi\psi}),x\nu\cn(\psi))\,,\cr
\noalign{\vskip 3pt}
&\ (\cotr{\Nrf A}{b})_\psi h_n(\varphi,y,x)=(\cotr{\Nrf A}{b})_\psi(y\nu(f_\varphi),x)=(y\nu(f_\varphi)\nu \cn (\psi'),x\nu\cn(\psi))\,,
\end{aligned}\]
où, dans les notations de~\ref{notation:simili_joint}, $\psi'=\Deltan{m}\amalg\psi:\Deltan{m+1+n'}\to\Deltan{m+1+n}$.
Il suffit donc de prouver que pour tout $p\geq0$, et tout élément $(i_0,\dots,i_p)$ de la base de $\Deltan{m+1+n'}$, on a
\[
\cn (\psi')f_{\varphi\psi}(i_0,\dots,i_p)=f_\varphi\,\cn (\psi')(i_0,\dots,i_p)\,.
\]
Or, d'une part, on a
\[\begin{aligned}
\cn (\psi')f_{\varphi\psi}(i_0,\dots,i_p)&=\cn (\psi')(f_{n'}(i_0,\dots,i_{k_{\varphi\psi}})(i_{k_{\varphi\psi}+1},\dots,i_p))\cr
\noalign{\vskip 3pt}
&=\cn (\psi')f_{n'}(i_0,\dots,i_{k_{\varphi\psi}})\,\cn (\psi')(i_{k_{\varphi\psi}+1},\dots,i_p)\cr
\noalign{\vskip 3pt}
&=f_{n}\cn (\psi')(i_0,\dots,i_{k_{\varphi\psi}})\,\cn (\psi')(i_{k_{\varphi\psi}+1},\dots,i_p)\cr
\noalign{\vskip 3pt}
&=f_{n}(\psi'(i_0),\dots,\psi'(i_{k_{\varphi\psi}}))(\psi'(i_{k_{\varphi\psi}+1}),\dots,\psi'(i_p))\,,
\end{aligned}\]
(l'avant-dernière égalité résultant du paragraphe~\ref{paragr:comp_smpl}), et d'autre part, on vérifie aussitôt que $\overline{\varphi\psi}=\bar\varphi\psi'$, d'où par définition de $k_{\varphi\psi}$, on a
\[
\bar\varphi\psi'(i_0)=\cdots=\bar\varphi\psi'(i_{k_{\varphi\psi}})=0\qquad\hbox{et}\qquad\bar\varphi\psi'(i_{k_{\varphi\psi}+1})=\cdots=\bar\varphi\psi'(i_p)=1\,,
\]
et par suite,
\[\begin{aligned}
f_\varphi\,\cn (\psi')(i_0,\dots,i_p)&=f_\varphi(\psi'(i_0),\dots,\psi'(i_p))\cr
\noalign{\vskip 3pt}
&=f_{n}(\psi'(i_0),\dots,\psi'(i_{k_{\varphi\psi}}))(\psi'(i_{k_{\varphi\psi}+1}),\dots,\psi'(i_p))\,.
\end{aligned}\]
Ces égalités étant valables, en vertu des commentaires qui suivent la définition de $f_\varphi$ dans le paragraphe~\ref{paragr:def_hmtp}, même s'il existe $k$ tel que $0\leq k<p\,$ et tel que \hbox{$\psi'(i_k)=\psi'(i_{k+1})$}, ceci achève la preuve de l'assertion.
\end{paragr}

\begin{paragr}
Pour tous $p,n\geq0$, et tout élément $(i_0,\dots,i_p)$ de la base de $\Deltan{m+1+n}$, si \hbox{$\varphi:\Deltan{n}\to\Deltan{1}$} est constant de valeur $0$, alors $k_\varphi=p$ et $f_\varphi(i_0,\dots,i_p)=f_n(i_0,\dots,i_p)$, et si $\varphi$ est constant de valeur $1$, alors $k_\varphi=-1$ et $f_\varphi(i_0,\dots,i_p)=(i_0,\dots,i_p)$, ce qui implique que le morphisme $h:\Deltan{1}\times\cotr{\Nrf A}{b}\to\cotr{\Nrf A}{b}$ est une homotopie simpliciale de $sr$ vers $1_{\cotr{\Nrf A}{b}}$.
\end{paragr}

\begin{paragr}
Il reste à montrer que $\cotr{\Nrf A}{b_m}$ est un rétracte par déformation fort de $\cotr{\Nrf A}{b}$, autrement dit que le carré
\[
\xymatrix{
&\Deltan{1}\times\cotr{\Nrf A}{b_m}\ar[r]^-{pr_2}\ar[d]_{1_{\Deltan{1}}\times s}
&\cotr{\Nrf A}{b_m}\ar[d]^s
\\
&\Deltan{1}\times\cotr{\Nrf A}{b}\ar[r]_-h
&\cotr{\Nrf A}{b}
&\kern -20pt,\kern20pt
}
\]
où $pr_2$ désigne la deuxième projection, est commutatif. Or, pour tout $n\geq0$, et tout $n$\nbd-simplexe $(\varphi,y',x)$ de $\Deltan{1}\times\cotr{\Nrf A}{b_m}$, on a
\[
\begin{aligned}
&s\,pr_2(\varphi,y',x)=s(y',x)=((b,y')\nu(f_n),x)\,,\cr
\noalign{\vskip 3pt}
&h(1_{\Deltan{1}}\times s)(\varphi,y',x)=h(\varphi,(b,y')\nu(f_n),x)=((b,y')\nu(f_n)\nu(f_\varphi),x)\,.
\end{aligned}
\]
Il suffit donc de montrer que pour tout $p\geq0$, et tout $(i_0,\dots,i_p)$ dans la base de $\cn\Deltan{m+1+n}$, on a $f_nf_\varphi(i_0,\dots,i_p)=f_n(i_0,\dots,i_p)$, autrement dit en vertu de la définition de $f_\varphi$, que
\[
f_n\bigl(f_n(i_0,\dots,i_{k_\varphi})(i_{k_\varphi+1},\dots,i_p)\bigr)=f_n(i_0,\dots,i_p)\,.
\]
On remarque que si $k_\varphi=-1$, cette égalité est évidente, et que si $k_\varphi=p$, elle résulte de la relation $f_nf_n=f_n$, qui est conséquence immédiate de la définition de $f_n$ dans le paragraphe~\ref{paragr:def_section}. On peut donc supposer que $0\leq k_\varphi<p$.
On distingue plusieurs cas, suivant la définition de $f_n$:
\begin{itemize}
\item[(1)] $i_{k_\varphi}\leq m$ ou $i_0\geq m$: alors l'égalité est évidente. On peut donc supposer dans ce qui suit que $k_\varphi>0$ et $i_{k_\varphi}>m$.
\item[(2)] $k_\varphi=1$ et $i_0<m<i_1=i_{k_\varphi}$: alors, vu que $p>k_\varphi=1$, on a
\[\begin{aligned}
f_n\bigl(f_n(i_0,i_1)(i_2,\dots,i_p)\bigr)&=f_n(i_0,m,i_2,\dots,i_p)+f_n(m,i_1,i_2,\dots,i_p)\cr
&=0+(m,i_1,i_2,\dots,i_p)=f_n(i_0,\dots,i_p)\,.
\end{aligned}\]
\item[(3)] $k_\varphi>1$ et $i_0<m<i_1$: alors on a
\[\begin{aligned}
f_n\bigl(f_n(i_0,\dots,i_{k_\varphi})(i_{k_\varphi+1},\dots,i_p)\bigr)&=f_n(m,i_1,\dots,i_{k_\varphi},i_{k_\varphi+1},\dots,i_p)\cr
&=(m,i_1,\dots,i_p)=f_n(i_0,\dots,i_p)\,.
\end{aligned}\]
\item[$(3')$] $k_\varphi>1$ et $i_{k_\varphi-1}<m<i_{k_\varphi}$: alors, vu que $p>k_\varphi$, on a
\[\begin{aligned}
f_n\bigl(f_n(i_0,\dots,i_{k_\varphi})(i_{k_\varphi+1},\dots,i_p)\bigr)&=f_n(i_0,\dots,i_{k_\varphi-1},m,i_{k_\varphi+1},\dots,i_p)\cr
&=0=f_n(i_0,\dots,i_p)\,.
\end{aligned}\]
\item[(4)] $i_1\leq m\leq i_{k_\varphi-1}$: alors on a
\[
f_n\bigl(f_n(i_0,\dots,i_{k_\varphi})(i_{k_\varphi+1},\dots,i_p)\bigr)=0=f_n(i_0,\dots,i_p)\,.
\]
\end{itemize}
\smallbreak

On a donc établi le théorème suivant.
\end{paragr}

\begin{thm}\label{thm:hmtpsmpl}
Soient $A$ et $B$ deux \oo-catégories, $u:\Nrf A\to \Nrf B$ un morphisme d'ensembles simpliciaux, $m\geq0$ un entier, et \hbox{$b:\On{m}\to B$} un $m$\nbd-simplexe de $\Nrf B$. Alors le morphisme d'ensembles simpliciaux 
\[
r:\cotr{\Nrf A}{b}\to\cotr{\Nrf A}{b_m}\,,
\]
associant à un $n$\nbd-simplexe $(y,x)$ de $\cotr{\Nrf A}{b}$, $n\geq0$,
\[
x:\On{n}\to A\,,\ y:\On{m+1+n}\to B\quad \hbox{tels que}\quad y_{0,\dots,m}=b\,,\ y_{m+1,\dots,m+1+n}=u(x)\,,
\]
le $n$\nbd-simplexe $(y_{m,m+1,\dots,m+1+n},x)$ de $\cotr{\Nrf A}{b_m}$, admet une section $s$ faisant de $\cotr{\Nrf A}{b_m}$ un rétracte par déformation fort de $\cotr{\Nrf A}{b}$. En particulier, $r$ est une équivalence faible simpliciale.
\end{thm}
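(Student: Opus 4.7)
Je vais prouver le th�or�me en construisant explicitement une section $s:\cotr{\Nrf A}{b_m}\to\cotr{\Nrf A}{b}$ du morphisme $r$, puis une homotopie simpliciale $h:\Deltan{1}\times\cotr{\Nrf A}{b}\to\cotr{\Nrf A}{b}$ entre $s\comp r$ et l'identit� qui soit constante sur l'image de $s$. G�om�triquement, un $n$\nbd-simplexe de $\cotr{\Nrf A}{b}$ est un couple $(y,x)$ avec $y:\On{m+1+n}\to B$ dont le $m$\nbd-simplexe initial vaut $b$ et le $n$\nbd-simplexe final vaut $u(x)$; la section doit reconstruire ce prolongement � partir du seul vertex $b_m$, et l'homotopie doit \og pousser\fg{} les vertex $0,\dots,m-1$ vers le vertex $m$ le long des ar�tes de $b$.

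Pour effectuer ces constructions au niveau \oo-cat�gorique, o� les orientaux $\On{m+1+n}$ sont des objets complexes, je passerai par les complexes dirig�s augment�s. Par le th�or�me~\ref{thm:Steiner} et la proposition~\ref{prop:amalg_Stf}, il suffit de construire un morphisme \hbox{$f_n:\cn\Deltan{m+1+n}\to\cn\Deltan{m}\amalg_{\cn\Deltan{0}}\cn\Deltan{1+n}$} dans $\Cda$. Sur un g�n�rateur $(i_0,\dots,i_p)$ de $\cn\Deltan{m+1+n}$, je d�finirai $f_n$ par cas selon la position des $i_k$ par rapport � $m$~: l'identit� si tous les $i_k$ sont du m�me c�t� de $m$, l'insertion canonique de $m$ (avec d�doublement en degr� $1$) si les $i_k$ ne traversent $m$ qu'aux extr�mit�s, et z�ro sinon. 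La section sera alors d�finie par $s(y',x)=((b,y')\nu(f_n),x)$, o� $(b,y')$ s'interpr�te comme le \oo-foncteur d�fini sur le pushout $\On{m}\amalg_{\On{0}}\On{1+n}$.

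L'homotopie se construira de mani�re analogue via une famille param�tr�e de morphismes $f_\varphi:\cn\Deltan{m+1+n}\to\cn\Deltan{m+1+n}$, index�e par $\varphi:\Deltan{n}\to\Deltan{1}$. L'id�e est de partitionner un g�n�rateur $(i_0,\dots,i_p)$ en un pr�fixe et un suffixe � l'indice $k_\varphi$ o� le prolongement naturel de $\varphi$ par $0$ sur $\{0,\dots,m\}$ change de valeur, puis d'appliquer $f_n$ au pr�fixe en laissant le suffixe inchang�. Aux extr�mit�s $\varphi\equiv0$ et $\varphi\equiv1$, on retrouvera respectivement le compos� $s\comp r$ et l'identit�; la d�finition en suffixe rendra en outre la condition de d�formation forte essentiellement automatique, via l'identit� $f_nf_n=f_n$.

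Le principal obstacle sera la v�rification calculatoire que $f_n$ et les $f_\varphi$ sont bien des morphismes de complexes dirig�s augment�s, ce qui se ram�ne � la commutation avec les diff�rentielles et n�cessite une distinction de cas soign�e selon le nombre d'indices $i_k$ inf�rieurs ou �gaux � $m$; les cas o� le simplexe traverse $m$ plusieurs fois devront s'annuler gr�ce aux signes altern�s du bord simplicial. La simplicialit� des familles $(s_n)$ et $(h_n)$ se ram�nera � une identit� du type $f_n\cn(\psi')=(\cn\Deltan{m}\amalg_{\cn\Deltan{0}}\cn(\psi''))f_{n'}$, qui se v�rifie par la m�me analyse de cas. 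Aucun obstacle conceptuel n'est � pr�voir une fois les bonnes formules identifi�es; le travail restant est essentiellement combinatoire, les formules �tant dict�es par la g�om�trie sous-jacente de r�traction simpliciale.
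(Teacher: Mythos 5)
Votre plan suit essentiellement la m�me d�marche que la preuve du texte : m�me d�finition par cas de $f_n$ sur les g�n�rateurs de $\cn\Deltan{m+1+n}$ selon la position des indices par rapport � $m$, m�me construction de l'homotopie par scindage pr�fixe/suffixe � l'indice $k_\varphi$, et m�me m�canisme pour la r�traction forte (avec la pr�cision que la v�rification compl�te demande l'identit� $f_n\bigl(f_n(i_0,\dots,i_{k_\varphi})(i_{k_\varphi+1},\dots,i_p)\bigr)=f_n(i_0,\dots,i_p)$, dont $f_nf_n=f_n$ n'est que le cas $k_\varphi=p$). Les v�rifications calculatoires que vous annoncez (compatibilit� aux diff�rentielles par annulation des termes travers�s, simplicialit� via $f_n\cn(\psi')=(\cn\Deltan{m}\amalg_{\cn\Deltan{0}}\cn(\psi''))f_{n'}$) sont exactement celles effectu�es dans le texte et aboutissent sans obstacle.
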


% TOCHECK
\vspace{-2pt}

Le cas particulier de ce théorème, pour $u$ le nerf de Street d'un \oo-foncteur \hbox{$A\to B$}, achève la démonstration du théorème~A \oo-catégorique de la section précédente.

\backmatter

{
% TOCHECK
\vspace{-10pt}
\bibliography{Catmod}
\bibliographystyle{mysmfplain}
}

\end{document}